\newtheorem{theorem}{Theorem}[section]
\newtheorem{corollary}[theorem]{Corollary}
\newtheorem{defi}[theorem]{Definition}
\newtheorem{prop}[theorem]{Proposition}
\newtheorem{lemma}[theorem]{Lemma}
\newtheorem{remark}[theorem]{Remark}
\newcommand{\bx}{\boldsymbol{x}}
\newcommand{\bw}{\boldsymbol{w}}
\newcommand{\bv}{\boldsymbol{v}}
\newcommand{\bH}{\boldsymbol{\mathcal{H}}}
\newcommand{\bD}{\boldsymbol{\mathcal{D}}}
\newcommand{\dom}{\text{dom}}
\newcommand{\MM}{\mathfrak{M}}
\newcommand{\cA}{\mathcal{A}}
\newcommand{\cM}{\mathcal{M}}
\newcommand{\cC}{\mathcal{C}}
\newcommand{\cZ}{\mathcal{Z}}
\newcommand{\cE}{\mathcal{E}}
\newcommand{\cN}{\mathcal{N}}
\newcommand{\vertiii}[1]{{\left\vert\kern-0.25ex\left\vert\kern-0.25ex\left\vert #1
			\right\vert\kern-0.25ex\right\vert\kern-0.25ex\right\vert}}
\DeclareMathOperator{\zer}{zer}
\DeclareMathOperator{\Fix}{Fix}
\DeclareMathOperator{\Img}{Im}
\DeclareMathOperator{\Span}{span}
\DeclareMathOperator{\Var}{Var}
\DeclareMathOperator{\Div}{div}
\title{\textbf{Graph and distributed extensions of the Douglas--Rachford method}}
\author{Kristian Bredies\thanks{Institute of Mathematics and Scientific Computing, University of Graz, Graz, Austria. email: \texttt{kristian.bredies@uni-graz.at}, \texttt{enis.chenchene@uni-graz.at}.} \and Enis Chenchene\footnotemark[1] \and Emanuele Naldi\thanks{Institute of Analysis and Algebra, TU Braunschweig, email:  \texttt{e.naldi@tu-braunschweig.de}}}
\date{\today}
\begin{document}

\maketitle
\begin{abstract}
	In this paper, we propose several graph-based extensions of the Douglas--Rachford splitting (DRS) method to solve monotone inclusion problems involving the sum of $N$ maximal monotone operators. Our construction is based on a two-layer architecture that we refer to as bilevel graphs, to which we associate a generalization of the DRS algorithm that presents the prescribed structure. The resulting schemes can be understood as unconditionally stable frugal resolvent splitting methods with a minimal lifting in the sense of Ryu [Math Program 182(1):233--273, 2020], as well as instances of the (degenerate) Preconditioned Proximal Point method, which provides robust convergence guarantees. We further describe how the graph-based extensions of the DRS method can be leveraged to design new fully distributed protocols. Applications to a congested optimal transport problem and to distributed Support Vector Machines show interesting connections with the underlying graph topology and highly competitive performances with state-of-the-art distributed optimization approaches.
\end{abstract}

\section{Introduction}
The proximal point algorithm is a widely used tool for solving a variety of problems such as finding zeros of maximal monotone operators, fixed-points of nonexpansive mappings, as well as minimizing convex functions. Given a Hilbert space $\bH$, the \emph{Preconditioned Proximal Point} (PPP) method can be understood as proximal point method with respect to a new metric induced by a self-adjoint (uniformly) positive definite linear map $\mathcal{M}:\bH\to \bH$. For a maximal monotone operator $\mathcal{A}:\bH\to 2^{\bH}$, the general iteration of a PPP method reads
\begin{equation}\label{eq:PPP}
	u^0 \in \bH, \quad u^{k+1}=u^k+\theta_k\left(\mathcal{T}u^k -u^k\right) \quad \text{for all} \ k\in \mathbb{N},
\end{equation}
where $\mathcal{T}:=\left( \mathcal{M}+\mathcal{A}\right)^{-1}\mathcal{M}$, and $\theta_k\in (0,2]$ are relaxation parameters that satisfy $\sum_k\theta_k(2-\theta_k) = +\infty$. In our recent work \cite{bredies2021degenerate}, we focused on the degenerate case, i.e., assuming that $\mathcal{M}$ is only positive semidefinite, allowing in this way $\mathcal{M}$ to have a possibly large kernel. In that case, for the iterations in \eqref{eq:PPP} to make sense, we restricted the analysis to the class of \emph{admissible preconditioners}, i.e., such that $\mathcal{T}$ is everywhere defined and single-valued. The degenerate PPP framework \cite{bredies2021degenerate, BrediesDRS, Bredies2017APP} allows us to study in a unifying theory a large class of known (and new) splitting methods such as Chambolle--Pock \cite{Chambolle2011} (also in the degenerate case, i.e., in the notation of \cite{Chambolle2011}, where $\tau\sigma L^2=1)$, Peaceman--Rachford \cite{peaceman_rachford}, Davis--Yin \cite{Davis2016} and Douglas--Rachford \cite{drs_mercier_lions}, and to easily derive new extensions to the $N$-operator problem:
	\begin{equation}\label{eq:Nop}
		\text{find} \ x \in H \ \text{such that:} \ 0 \in (A_1+\cdots+A_N)x,
	\end{equation}
	where $A_i$ are maximal monotone operators on the Hilbert space $H$. To solve problem \eqref{eq:Nop}, which we will always assume possible, we consider the class of so-called \emph{frugal resolvent splitting} (FRS) methods introduced by Ryu in \cite{Ryu}. These are iterative methods, which at every iteration only require a single evaluation of the resolvents, i.e., $J_{\sigma_i A_i}:=(I+\sigma_i A_i)^{-1}$ for some $\sigma_i>0$, and simple algebraic operations, such as vector additions and scalar multiplications. It has also been proven in \cite{Ryu} and later extended in \cite{malitsky2021resolvent} that if $N>2$, \emph{unconditionally stable} FRS methods, i.e., which produce (weakly) convergent sequences to a solution to \eqref{eq:Nop} for every tuple $(A_1, \dots, A_N)$ of maximal monotone operators, can only be designed on a $d$-fold product space with $d\geq N-1$, thus requiring several additional variables. FRS methods with $d=N-1$ are said to have a \emph{minimal lifting} or \emph{minimal variables}. In this paper, we focus on FRS methods with minimal variables.

	While for the two-operator case, i.e., with $N=2$, the class of unconditionally stable FRS methods with minimal variables reduces to the celebrated DRS method \cite[Corollary 1]{Ryu}, for larger problems, the resulting schemes present many different structures. Unconditionally stable FRS schemes with minimal variables and parallel structures can be derived with the so-called \textit{product-space trick}, see, e.g., \cite[Section 9.1]{condat}. Schemes with different structures have been discovered more recently. The Sequential DRS, introduced in \cite{bredies2021degenerate}, presents a purely sequential nature, which is very close to the method introduced by Malitsky and Tam in \cite{malitsky2021resolvent}, where the pure sequentiality is in some sense broken with an additional communication between the first and the last operator. The Malitsky--Tam splitting, when $N=3$, is in turn highly related, yet not equivalent, to the method introduced by Ryu in \cite{Ryu}. This systematic unfolding of structurally different FRS methods with minimal variables leads us to the natural question:~can \emph{all} the structures be achieved? The main novelty that this paper provides is a positive answer to this question, in a sense that we will make precise in the course of the paper, cf., Corollary \ref{cor:existence_methods}.

	The rest of this paper is organized as follows. In Section \ref{sec:background} we introduce some preliminary notions, all the terminology and results related to the degenerate PPP framework and to the theory of FRS methods, along with the notion of bilevel graph. Section \ref{sec:general_case} presents the proposed graph-based extensions of the DRS method along with some properties. In Section \ref{sec:distributed} we show how the graph-based DRS can be leveraged to design new fully distributed schemes for \eqref{eq:Nop} assuming tree or more general base graphs. In Section \ref{sec:experiments} we show an application to a congested optimal transport problem emphasizing, in particular, an interesting influence of the algebraic connectivity of the graph topology on the convergence speed of the method. We conclude with an application to distributed Support Vector Machines, showing that the devised distributed schemes reach highly competitive performances compared to state-of-the-art methods such as P-EXTRA \cite{PG-EXTRA} and a distributed variant of the PDHG method \cite{Chambolle2011}.

	\section{Background and preliminary results}\label{sec:background}
	Let $\bH$ be a real Hilbert space, $\mathcal{A}:\bH\to 2^{\bH}$ be a maximal monotone operator and let $\mathcal{M}:\bH\to\bH$ be a self-adjoint linear bounded operator. Finding a zero of $\mathcal{A}$, i.e., a point $u \in \bH$ such that $0 \in \mathcal{A}u$, could be formulated as a fixed-point inclusion problem $u \in \mathcal{T}u$, with $\mathcal{T}:=\left(\mathcal{M}+\mathcal{A}\right)^{-1}\mathcal{M}$. Even if $\mathcal{M}$ is not invertible in the classical sense, we shall still consider $\mathcal{M}^{-1}\mathcal{A}$ as a composition of multivalued operators and it holds that $\mathcal{T}=\left(I+\mathcal{M}^{-1}\mathcal{A}\right)^{-1}$. Note that $\cM$ defines a seminorm on $\bH$, that is $\|u\|_{\cM}^2 = \langle \cM u, u\rangle$ for all $u \in \bH$. The following decomposition of $\cM$ will be useful, see \cite[Proposition 2.3]{bredies2021degenerate} for a proof.

	\begin{prop}\label{prop:onto_decomposition}
		Let $\mathcal{M}:\bH\to \bH$ be a self-adjoint, linear, bounded, positive semidefinite operator. Then, there exists an injective operator $\mathcal{C}:\bD\to \bH$, for some real Hilbert space $\bD$, such that $\mathcal{M}=\mathcal{C}\mathcal{C}^*$. Moreover, if $\mathcal{M}$ has closed range, then $\mathcal{C}^*$ is onto.
	\end{prop}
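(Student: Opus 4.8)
The plan is to build $\cC$ from the positive square root of $\cM$. First I would invoke the continuous functional calculus for the bounded, self-adjoint, positive semidefinite operator $\cM$ to obtain its unique positive square root $S:=\cM^{1/2}$, which is again bounded, self-adjoint and positive semidefinite and satisfies $S^2=\cM$. A one-line computation, $\|Su\|^2=\langle S^2u,u\rangle=\langle\cM u,u\rangle$, shows that $\ker S=\ker\cM$; since $S$ is self-adjoint this also yields $\overline{\Img S}=(\ker S)^\perp=(\ker\cM)^\perp=\overline{\Img\cM}$.

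Next I would choose the domain space. Set $\bD:=(\ker\cM)^\perp=\overline{\Img\cM}$, a closed subspace of $\bH$ and hence a Hilbert space in its own right, write $\iota:\bD\hookrightarrow\bH$ for the inclusion and $P=\iota^*:\bH\to\bD$ for the orthogonal projection onto $\bD$, and define $\cC:=S\iota$, i.e.\ the restriction of $S$ to $\bD$. Injectivity is immediate: $\cC v=0$ forces $v\in\ker S\cap\bD=\ker\cM\cap(\ker\cM)^\perp=\{0\}$. For the factorization I would compute $\cC^*=\iota^*S=PS$; because $\Img S\subseteq\overline{\Img S}=\bD$ the projection $P$ acts as the identity on $\Img S$, so $\cC^*u=Su$ for every $u\in\bH$. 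Consequently $\cC\cC^*u=S(Su)=S^2u=\cM u$, which is the desired identity $\cM=\cC\cC^*$.

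It remains to handle the closed-range claim, which I expect to be the only genuinely nontrivial point. Observe that $\Img\cC^*=\Img(PS)=\Img S$ (viewed inside $\bD$), so $\cC^*$ is onto exactly when $\Img S=\bD=\overline{\Img S}$, i.e.\ when $S$ has closed range. The idea is to upgrade closedness of $\Img\cM$ to a bounded-below estimate for $S$. The restriction $\cM|_\bD:\bD\to\Img\cM$ is a continuous bijection (injective since $\bD=(\ker\cM)^\perp$, surjective because $\cM$ annihilates $\bD^\perp=\ker\cM$) between Hilbert spaces, so the open mapping theorem provides $c>0$ with $\|\cM v\|\geq c\|v\|$ for all $v\in\bD$. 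Combining this with $\|\cM v\|=\|S^2v\|\leq\|S\|\,\|Sv\|$ gives $\|Sv\|\geq(c/\|S\|)\|v\|$ on $\bD$, so $S|_\bD$ is bounded below and therefore has closed range; since $S$ vanishes on $\ker S=\bD^\perp$, we get $\Img S=S(\bD)$ closed, hence $\Img\cC^*=\bD$ and $\cC^*$ is onto. The main obstacle throughout is this last implication---passing from closed range of $\cM$ to closed range of its square root---for which the bounded-below/open-mapping argument is the cleanest route I see; all the remaining steps are routine functional-calculus and adjoint manipulations.
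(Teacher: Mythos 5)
Your proof is correct, and your construction---the positive square root $S=\cM^{1/2}$ restricted to $\bD=(\ker\cM)^\perp=\overline{\Img\cM}$---is essentially the same one used in the reference to which the paper defers for this result, \cite[Proposition 2.3]{bredies2021degenerate}. Where you genuinely diverge is in the closed-range step. You route it through the open mapping theorem: $\cM|_\bD:\bD\to\Img\cM$ is a continuous bijection between Hilbert spaces, hence bounded below, and the estimate $\|\cM v\|\le\|S\|\,\|Sv\|$ transfers the lower bound to $S|_\bD$, giving $S$ closed range. This works (modulo the trivial case $\cM=0$, where $\|S\|=0$ and your constant $c/\|S\|$ is undefined---but then $\bD=\{0\}$ and there is nothing to prove), yet it is heavier than needed. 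A purely set-theoretic sandwich suffices: since $\cM=S^2$, one has $\Img\cM\subseteq\Img S$; on the other hand, $\Img S\subseteq\overline{\Img S}=(\ker S)^\perp=(\ker\cM)^\perp=\overline{\Img\cM}$; if $\Img\cM$ is closed, the two ends of the chain coincide, so $\Img S=\Img\cM=\bD$ and $\cC^*u=Su$ is onto $\bD$ with no norm estimates or completeness arguments at all. What your longer route buys in exchange is a quantitative byproduct: it shows $S|_\bD$ is bounded below, i.e.\ that $\cM$ has closed range if and only if $\cM^{1/2}$ does---slightly more than the proposition asks for.
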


	When $\mathcal{M}$ has closed range, we call \emph{any} factorization $\mathcal{M}=\cC\cC^*$, with $\cC:\bD\to \bH$ injective and $\bD$ a Hilbert space, an \emph{onto decomposition} of $\cM$.  In the following result, we prove that, once $\bD$ is fixed, onto decompositions are unique modulo orthogonal transformations.

	\begin{prop}\label{prop:uniqueness_onto}
		Let $\mathcal{M}:\bH\to \bH$ be a self-adjoint, linear, bounded, positive semidefinite operator with closed range. Then, $\cM = \cC\cC^*$, with $\cC:\bD\to \bH$, is an onto decomposition of $\mathcal{M}$ if and only if for every onto decomposition $\cM = \widetilde{\cC}\widetilde{\mathcal{C}}^*$, with $\widetilde{\cC}:\widetilde{\bD}\to \bH$, there exists a linear isomorphism $\mathcal{O}:\bD\to \widetilde{\bD}$ with $\mathcal{O}^{-1} = \mathcal{O}^*$, such that $\mathcal{C}=\widetilde{\mathcal{C}}\mathcal{O}$.
	\end{prop}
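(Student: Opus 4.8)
The plan is to prove both implications from a single structural observation, which I would establish first: for \emph{any} onto decomposition $\cM=\cC\cC^*$ the factor $\cC$ has closed range equal to $\Img\cM$, is bounded below, and $\cC^*$ is onto. Since $\cC$ is injective, $\ker\cC^*=\ker(\cC\cC^*)=\ker\cM$, hence $\overline{\Img\cC}=(\ker\cC^*)^\perp=(\ker\cM)^\perp=\overline{\Img\cM}=\Img\cM$, the last equality using the closed-range hypothesis. As $\Img\cM=\Img\cC\cC^*\subseteq\Img\cC\subseteq\overline{\Img\cC}=\Img\cM$, we obtain $\Img\cC=\Img\cM$, which is closed. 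Thus $\cC\colon\bD\to\Img\cM$ is a continuous bijection between Hilbert spaces, so by the bounded inverse theorem it is bounded below; consequently $\langle\cC^*\cC d,d\rangle=\|\cC d\|^2\geq c^2\|d\|^2$ for some $c>0$, so $\cC^*\cC\succeq c^2 I$ is invertible and in particular $\cC^*$ is onto. The same conclusions hold for every onto decomposition $\widetilde{\cC}\widetilde{\cC}^*$.

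For the forward implication, assume $\cM=\cC\cC^*=\widetilde{\cC}\widetilde{\cC}^*$ are two onto decompositions, with $\widetilde{\cC}\colon\widetilde{\bD}\to\bH$. By the previous step $\Img\cC=\Img\cM=\Img\widetilde{\cC}$, so I would define $\mathcal{O}:=\widetilde{\cC}^{-1}\cC\colon\bD\to\widetilde{\bD}$, where $\widetilde{\cC}^{-1}$ denotes the bounded inverse of the bijection $\widetilde{\cC}\colon\widetilde{\bD}\to\Img\cM$. This is well defined since $\Img\cC\subseteq\Img\widetilde{\cC}$, it is bounded, and it is a linear isomorphism with bounded inverse $\cC^{-1}\widetilde{\cC}$; moreover $\widetilde{\cC}\mathcal{O}=\cC$ by construction. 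It remains to show $\mathcal{O}$ is orthogonal. From $\cC=\widetilde{\cC}\mathcal{O}$ we get $\cC^*=\mathcal{O}^*\widetilde{\cC}^*$, whence $\widetilde{\cC}\,\mathcal{O}\mathcal{O}^*\,\widetilde{\cC}^*=\cC\cC^*=\cM=\widetilde{\cC}\widetilde{\cC}^*$, that is $\widetilde{\cC}(\mathcal{O}\mathcal{O}^*-I)\widetilde{\cC}^*=0$. Injectivity of $\widetilde{\cC}$ gives $(\mathcal{O}\mathcal{O}^*-I)\widetilde{\cC}^*=0$, and since $\widetilde{\cC}^*$ is onto, $\mathcal{O}\mathcal{O}^*=I$. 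As $\mathcal{O}$ is already invertible, this forces $\mathcal{O}^*=\mathcal{O}^{-1}$, so $\mathcal{O}$ is the desired orthogonal isomorphism with $\cC=\widetilde{\cC}\mathcal{O}$.

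For the converse, I would invoke Proposition \ref{prop:onto_decomposition} to obtain at least one onto decomposition $\widetilde{\cC}\widetilde{\cC}^*$ with $\widetilde{\cC}$ injective; applying the assumed property to it yields an orthogonal isomorphism $\mathcal{O}$ with $\cC=\widetilde{\cC}\mathcal{O}$, and since $\widetilde{\cC}$ is injective and $\mathcal{O}$ bijective, $\cC$ is injective, so $\cM=\cC\cC^*$ is indeed an onto decomposition. I expect the only genuinely delicate point to be the first paragraph: passing from injectivity of $\cC$ to $\Img\cC=\Img\cM$ closed and to $\cC$ bounded below is exactly where the closed-range hypothesis and the bounded inverse theorem enter, and it is what guarantees that $\mathcal{O}=\widetilde{\cC}^{-1}\cC$ is a bona fide bounded isomorphism between the possibly different spaces $\bD$ and $\widetilde{\bD}$, rather than merely a densely defined operator. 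Once this is secured, the identification of $\mathcal{O}$ as orthogonal is the short algebraic computation above.
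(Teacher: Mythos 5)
Your proof is correct, and it reaches the conclusion by a route that differs from the paper's in execution, though both rest on the same structural facts. The paper's argument is purely algebraic: starting from $\cC^*\cC\cC^*=\cC^*\widetilde{\cC}\widetilde{\cC}^*$ and the invertibility of $\cC^*\cC$, it produces the explicit formula $\mathcal{O}=\widetilde{\cC}^*\cC(\cC^*\cC)^{-1}$, reads off $\mathcal{O}^*\mathcal{O}=I$ from a derived identity, and obtains $\cC=\widetilde{\cC}\mathcal{O}$ by taking adjoints; the structural facts ($\Img\cC=\Img\cM$, $\cC$ and $\cC^*|_{\Img\cM}$ isomorphisms) are quoted from its opening observation rather than re-proved. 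You instead define $\mathcal{O}=\widetilde{\cC}^{-1}\cC$ directly, after securing via the bounded inverse theorem that $\widetilde{\cC}\colon\widetilde{\bD}\to\Img\cM$ has a bounded inverse, and you verify orthogonality by a cancellation argument ($\widetilde{\cC}$ injective, $\widetilde{\cC}^*$ onto) giving $\mathcal{O}\mathcal{O}^*=I$, which together with invertibility of $\mathcal{O}$ forces $\mathcal{O}^*=\mathcal{O}^{-1}$. Since $\widetilde{\cC}$ is injective, the equation $\widetilde{\cC}\mathcal{O}=\cC$ has a unique solution, so the two constructions produce the same operator; your route makes existence and uniqueness of $\mathcal{O}$ transparent, while the paper's yields a closed-form expression for $\mathcal{O}$ that involves inverting only the positive operator $\cC^*\cC$, never $\widetilde{\cC}$ itself. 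Your first paragraph is also a genuine addition: it re-derives from scratch (injectivity plus closed range of $\cM$ imply $\Img\cC=\Img\cM$, $\cC$ bounded below, $\cC^*$ onto) what the paper imports from Proposition \ref{prop:onto_decomposition} and its preliminary remark, making your write-up self-contained. One cosmetic point: in the converse you check only injectivity of $\cC=\widetilde{\cC}\mathcal{O}$; if one reads the statement as not presupposing the factorization $\cM=\cC\cC^*$, you should add the one-line computation $\cC\cC^*=\widetilde{\cC}\mathcal{O}\mathcal{O}^*\widetilde{\cC}^*=\widetilde{\cC}\widetilde{\cC}^*=\cM$, which is still more than the paper's ``immediately clear.''
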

	\begin{proof}
		First, note that, since $\cM$ has closed range, $\Img \cM$ equipped with the $\cM$-seminorm defines a Hilbert space. Further, for every onto decomposition $\cM = \cC\cC^*$, with $\cC:\bD\to \bH$, the operator $\cC^*$ is onto, and  thus, $\Img \cC = \Img \cM$. It is easy to observe that $\cC^*|_{\Img \cM}:\Img \cM\to \bD$, where $\cC^*|_{\Img \cM}$ is the restriction of $\cC^*$ to $\Img \cM=(\ker \cC^*)^\perp$, and $\cC:\bD\to \Img \cM$ define linear isomorphisms.

		Now, given two onto decompositions $\cM = \cC\cC = \widetilde{\cC}\widetilde{\cC}^*$ with $\mathcal{C}:\bD\to \bH$ and $\widetilde{\cC}:\widetilde{\bD}\to \bH$, we have $\mathcal{C}^*\mathcal{C}\mathcal{C}^*=\mathcal{C}^*\widetilde{\mathcal{C}}\widetilde{\mathcal{C}}^*$ and, since $\cC^*\mathcal{C}$ is a linear isomorphism, we can write
		\begin{equation}\label{eq:uniqueness_onto_proof}	\mathcal{C}^*=(\mathcal{C}^*\mathcal{C})^{-1}\mathcal{C}^*\widetilde{\mathcal{C}}\widetilde{\mathcal{C}}^*.
		\end{equation}
		From \eqref{eq:uniqueness_onto_proof}, it follows that
		$\cC^*\cC=(\cC^*\cC)^{-1}\cC^*\widetilde{\cC}\widetilde{\cC}^*\cC$ and again, since $\cC^*\cC$ is invertible, we get
		\begin{equation}\label{eq:uniqueness_onto_proof2}
			I=\big((\cC^*\cC)^{-1}\cC^*\widetilde{\cC}\big)\big(\widetilde{\cC}^*\cC(\cC^*\cC)^{-1}\big).
		\end{equation}
		Therefore, letting $\mathcal{O}=\widetilde{\cC}^*\cC(\cC^*\cC)^{-1}$, we get from \eqref{eq:uniqueness_onto_proof2} that $\mathcal{O}^*\mathcal{O}=I$, and from \eqref{eq:uniqueness_onto_proof}, taking adjoints, that $\cC = \widetilde{\cC}\mathcal{O}$. Being a composition of two linear isomorphisms, namely $\widetilde{\cC}^*\cC$ and $(\cC^*\cC)^{-1}$, the operator $\mathcal{O}$ is a linear isomorphism between $\bD$ and $\widetilde{\bD}$, and $\mathcal{O}^{-1}=\mathcal{O}^*$. The converse statement is immediately clear.
	\end{proof}

	In \cite{bredies2021degenerate} we show that if the preconditioner $\cM$ has closed range and $\mathcal{M}=\mathcal{C}\mathcal{C}^*$ is an onto decomposition with $\cC:\bD\to \bH$, proximal point iterations with respect to $\mathcal{M}^{-1}\mathcal{A}$ are in some sense equivalent to proximal point iterations with respect to the so-called \emph{parallel composition} $\mathcal{C}^* \rhd \mathcal{A} := \left(\mathcal{C}^* \mathcal{A}^{-1}\mathcal{C}\right)^{-1}$, which is defined on $\bD$. The reason lies in the following result, proven in \cite{bredies2021degenerate} and in \cite{parallel_composition_briceno} simultaneously.

	\begin{lemma}\label{lem:PushForward}
		Let $\mathcal{A}:\bH\to 2^{\bH}$ be an operator, $\mathcal{M}:\bH\to\bH$ be an admissible preconditioner with closed range and $\mathcal{M}=\mathcal{C}\mathcal{C}^*$ be an onto decomposition with $\cC:\bD\to \bH$. Then, the operator $\mathcal{C}^* \rhd \mathcal{A}$ is maximal monotone in $\bD$ and
		\begin{equation}\label{eq:reduced_resolvent}
			\left(I+\mathcal{C}^* \rhd \mathcal{A}\right)^{-1} = \mathcal{C}^*\left(\mathcal{M}+\mathcal{A}\right)^{-1}\mathcal{C}.
		\end{equation}
	\end{lemma}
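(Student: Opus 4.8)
The plan is to establish the resolvent identity \eqref{eq:reduced_resolvent} first, as an equality of set-valued maps, and then to use it to deduce the maximal monotonicity of $\mathcal{B}:=\mathcal{C}^*\rhd\mathcal{A}$ via Minty's theorem. Concretely, once \eqref{eq:reduced_resolvent} is in hand I will only need two further ingredients: monotonicity of $\mathcal{B}$ and surjectivity of $I+\mathcal{B}$, the latter being read off directly from the everywhere-definedness of the right-hand side of \eqref{eq:reduced_resolvent}. I prove the identity first precisely because it is what feeds the maximality argument, even though the lemma states the two conclusions in the opposite order.

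For the identity itself, I would fix $w\in\bD$ and chase a generic element $p\in(I+\mathcal{B})^{-1}w$ through the definitions. This is equivalent to $w-p\in\mathcal{B}p$, i.e. $p\in\mathcal{B}^{-1}(w-p)=\mathcal{C}^*\mathcal{A}^{-1}\mathcal{C}(w-p)$, so there exists $x\in\bH$ with $\mathcal{C}(w-p)\in\mathcal{A}x$ and $p=\mathcal{C}^*x$. Substituting $p=\mathcal{C}^*x$ and using $\mathcal{M}=\mathcal{C}\mathcal{C}^*$ rewrites $\mathcal{C}(w-p)\in\mathcal{A}x$ as $\mathcal{C}w-\mathcal{M}x\in\mathcal{A}x$, that is $\mathcal{C}w\in(\mathcal{M}+\mathcal{A})x$, or equivalently $x\in(\mathcal{M}+\mathcal{A})^{-1}\mathcal{C}w$, whence $p=\mathcal{C}^*x\in\mathcal{C}^*(\mathcal{M}+\mathcal{A})^{-1}\mathcal{C}w$. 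Every step in this chain is reversible, which yields the graph equality asserted in \eqref{eq:reduced_resolvent}.

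It remains to treat maximal monotonicity. Since an operator is monotone precisely when its inverse is, it suffices to note that $\mathcal{B}^{-1}=\mathcal{C}^*\mathcal{A}^{-1}\mathcal{C}$ is monotone, which follows from the monotonicity of $\mathcal{A}$ through the identity $\langle\mathcal{C}^*(g_1-g_2),p_1-p_2\rangle=\langle g_1-g_2,\mathcal{C}p_1-\mathcal{C}p_2\rangle\geq0$ for any $g_i$ with $\mathcal{C}p_i\in\mathcal{A}g_i$. The step I expect to require the most care is the surjectivity of $I+\mathcal{B}$, where the hypotheses of admissibility and closed range must be used together: because $\mathcal{C}^*$ is onto one has $\Img\mathcal{M}=\Img\mathcal{C}$, and every $y\in\Img\mathcal{M}$ can be written $y=\mathcal{M}u$, so that $(\mathcal{M}+\mathcal{A})^{-1}y=\mathcal{T}u$ is single-valued and nonempty by admissibility. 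Hence for each $w\in\bD$ the point $\mathcal{C}w$ lies in $\Img\mathcal{C}=\Img\mathcal{M}$, the right-hand side of \eqref{eq:reduced_resolvent} is a well-defined single point of $\bD$, and therefore $\dom(I+\mathcal{B})^{-1}=\bD$, i.e. $\Img(I+\mathcal{B})=\bD$. By Minty's theorem, monotonicity of $\mathcal{B}$ together with surjectivity of $I+\mathcal{B}$ gives maximal monotonicity, completing the argument.
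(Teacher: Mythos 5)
Your proof is correct, and all three ingredients are in order: the graph-chasing argument for \eqref{eq:reduced_resolvent} is a genuine chain of equivalences (each step unfolds a definition of an inverse and uses $\cM=\cC\cC^*$), the monotonicity of $\cC^*\rhd\cA$ via monotonicity of its inverse $\cC^*\cA^{-1}\cC$ is valid, and the surjectivity of $I+\cC^*\rhd\cA$ correctly combines the closed-range hypothesis (which gives $\Img\cM=\Img\cC$ since $\cC^*$ is onto) with admissibility (which gives nonemptiness of $(\cM+\cA)^{-1}\cC w$), so Minty's theorem applies. Note that the paper itself does not prove this lemma; it defers to the cited references \cite{bredies2021degenerate, parallel_composition_briceno}, and your argument is essentially the standard one used there, so there is no genuinely different route to report. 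One small point worth making explicit: the lemma as stated says only ``an operator'' $\cA$, but your monotonicity step (and the conclusion itself) requires $\cA$ to be monotone --- this is the standing assumption of Section \ref{sec:background}, and without it the statement is false (e.g.\ $\cM=I$ and $\cA=-\tfrac{1}{2}I$ is admissible with $\cC^*\rhd\cA=\cA$ not monotone), so you should state that you are invoking it.
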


	\paragraph{The reduced scheme.} An onto decomposition of $\mathcal{M}$ allows us to derive what we called in \cite{bredies2021degenerate} the \emph{reduced} PPP method. Indeed, since $\mathcal{M}=\mathcal{C}\mathcal{C}^*$ for $\cC:\bD\to \bH$, the general PPP iteration writes
	\begin{equation}\label{eq:from_PPP_to_reduced}
		u^{k+1} = u^k+\theta_k\left(\left(\mathcal{M}+\mathcal{A}\right)^{-1}\mathcal{C}\mathcal{C}^*u^k -u^k\right).
	\end{equation}
	Simply applying $\mathcal{C}^*$ to \eqref{eq:from_PPP_to_reduced} and considering $w^{k} = \mathcal{C}^* u^k$ for all $k \in \mathbb{N}$, we get, using \eqref{eq:reduced_resolvent}, that
	\begin{align}\label{eq:reduced_PPP}
		w^{k+1} & = w^k+\theta_k\left( \mathcal{C}^*\left(\mathcal{M}+\mathcal{A}\right)^{-1}\mathcal{C} w^k-w^k \right)\nonumber \\
		        & = w^k + \theta_k\left( \left(I+\mathcal{C}^* \rhd \mathcal{A}\right)^{-1}w^k-w^k \right).
	\end{align}
	The method \eqref{eq:reduced_PPP} is a then classical proximal point scheme with respect to the operator $\left(I+\mathcal{C}^* \rhd \mathcal{A}\right)^{-1}$ that we denote by $\widetilde{\mathcal{T}}$. We called such a method the \textit{reduced} PPP method since, as we also seen in the proof of Proposition \ref{prop:uniqueness_onto}, the space $\bD$ is (isometrically) isomorphic to the Hilbert space $\Img \mathcal{M}$ endowed with the $\mathcal{M}$-seminorm, which, if $\mathcal{M}$ is degenerate, is strictly contained in $\bH$.

	\begin{remark}\label{rem:independence_onto_dec}
		The reduced scheme does not depend on the onto decomposition of $\mathcal{M}$. Indeed, if $\mathcal{C}\mathcal{C}^*=\widetilde{\mathcal{C}}\widetilde{\mathcal{C}}^*=\mathcal{M}$ are two onto decompositions with $\cC:\bD\to \bH$ and $\widetilde{\cC}:\widetilde{\bD}\to \bH$, then by Proposition \ref{prop:uniqueness_onto} we have $\mathcal{C} =\widetilde{\mathcal{C}}\mathcal{O}$ for some linear isomorphism $\mathcal{O}$ with $\mathcal{O}^{-1} = \mathcal{O}^*$. Thus, since the corresponding reduced sequences, $\{w^k\}_k$ and $\{\widetilde{w}^k\}_k$, satisfy $w^k = \mathcal{C}^*u^k$ and $\widetilde{w}^k = \widetilde{\mathcal{C}}^*u^k$, we have, for all $k \in \mathbb{N}$,
		\begin{equation}
			w^k = \mathcal{C}^*u^k = \mathcal{O}^*\widetilde{\mathcal{C}}^*u^k =  \mathcal{O}^*\widetilde{w}^k.
		\end{equation}
		Therefore, $\mathcal{O}w^k = \widetilde{w}^k$ for all $k \in \mathbb{N}$, and, hence, the two algorithms are equivalent.
	\end{remark}

	\paragraph{Convergence result.} The convergence analysis for degenerate PPP methods has been investigated in \cite{bredies2021degenerate}. We summarize the main convergence result.

	\begin{theorem}\label{thm:convergence}
		Let $\mathcal{A}:\bH\to 2^{\bH}$ with $\zer\mathcal{A}\neq\emptyset$ be a maximal monotone operator and $\mathcal{M}$ be an admissible preconditioner with closed range. Let  $\mathcal{M}=\mathcal{C}\mathcal{C}^*$ be an onto decomposition of $\mathcal{M}$ with $\mathcal{C}:\bD\to\bH$, and let $\mathcal{T}=(I+\cM^{-1}\cA)^{-1}$, $\widetilde{\mathcal{T}} = (I+\cC^*\rhd \cA)^{-1} $. Denote by $\{u^k\}_k$ the PPP sequence according to \eqref{eq:PPP} and $\{w^k\}_k$ the corresponding reduced sequence according to \eqref{eq:reduced_PPP}. Then, we have
		\begin{enumerate}
			\item $\{w^k\}_k$ weakly converges in $\bD$ to a point $w^*\in \bD$ such that $u^* = \left(\mathcal{M}+\mathcal{A}\right)^{-1}\mathcal{C}w^*\in \zer \mathcal{A}$.
			\item If $(\mathcal{M}+\mathcal{A})^{-1}$ is Lipschitz, then $\{(\mathcal{M}+\mathcal{A})^{-1}\mathcal{C}w^k\}_k$ weakly converges to $u^*$.
		\end{enumerate}
	\end{theorem}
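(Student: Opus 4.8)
The plan is to transport everything to the reduced space $\bD$, where the iteration becomes a textbook (relaxed) proximal point method, and then to lift the information back. First I would observe that, by Lemma~\ref{lem:PushForward}, $\cC^*\rhd\cA$ is maximal monotone on $\bD$, so $\widetilde{\mathcal{T}}=(I+\cC^*\rhd\cA)^{-1}$ is the resolvent of a maximal monotone operator and hence firmly nonexpansive with full domain. Since $w^{k+1}=w^k+\theta_k(\widetilde{\mathcal{T}}w^k-w^k)$ by \eqref{eq:reduced_PPP}, the reduced sequence is exactly a Krasnoselskii--Mann iteration for $\widetilde{\mathcal{T}}$. To invoke its convergence theory I must first guarantee $\Fix\widetilde{\mathcal{T}}\neq\emptyset$: picking $\bar u\in\zer\cA$ (nonempty by hypothesis), admissibility makes $\mathcal{T}$ single-valued and renders $\bar u$ a fixed point of it, while the intertwining relation $\cC^*\mathcal{T}=\widetilde{\mathcal{T}}\cC^*$ — immediate from \eqref{eq:reduced_resolvent} and $\cM=\cC\cC^*$ — yields $\widetilde{\mathcal{T}}(\cC^*\bar u)=\cC^*\bar u$. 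With $\sum_k\theta_k(2-\theta_k)=+\infty$, the standard theory then delivers Fej\'er monotonicity of $\{w^k\}$ with respect to $\Fix\widetilde{\mathcal{T}}$, the asymptotic regularity $\widetilde{\mathcal{T}}w^k-w^k\to 0$ strongly, and, through demiclosedness of $I-\widetilde{\mathcal{T}}$ together with Opial's lemma, weak convergence $w^k\rightharpoonup w^*$ with $w^*\in\Fix\widetilde{\mathcal{T}}$.

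For the first assertion it then remains to identify $u^*$. Since $w^*\in\Fix\widetilde{\mathcal{T}}$, relation \eqref{eq:reduced_resolvent} gives $\cC^*u^*=\widetilde{\mathcal{T}}w^*=w^*$ for $u^*:=(\cM+\cA)^{-1}\cC w^*$, whence $\cM u^*=\cC\cC^*u^*=\cC w^*$. Feeding this into $\cC w^*\in(\cM+\cA)u^*$ cancels the $\cM$-part and leaves $0\in\cA u^*$, i.e.\ $u^*\in\zer\cA$.

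For the second assertion I would argue by cluster points. The Lipschitz assumption makes $(\cM+\cA)^{-1}$, hence $\{(\cM+\cA)^{-1}\cC w^k\}$, bounded, so weak cluster points exist; let $u_j:=(\cM+\cA)^{-1}\cC w^{k_j}\rightharpoonup u^\dagger$ along a subsequence. Two facts pin $u^\dagger$ down. On one hand $\cC^*u_j=\widetilde{\mathcal{T}}w^{k_j}=w^{k_j}+(\widetilde{\mathcal{T}}w^{k_j}-w^{k_j})\rightharpoonup w^*$, so by weak continuity of $\cC^*$ we get $\cC^*u^\dagger=w^*$ and thus $\cM u^\dagger=\cC w^*$. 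On the other hand $s_j:=\cC w^{k_j}-\cM u_j\in\cA u_j$, and using $\cM u_j=\cC\widetilde{\mathcal{T}}w^{k_j}$ one sees $s_j=-\cC(\widetilde{\mathcal{T}}w^{k_j}-w^{k_j})\to 0$ strongly. Passing to the limit in the monotonicity inequality $\langle u_j-a,\,s_j-b\rangle\geq 0$ for arbitrary $(a,b)\in\gra\cA$ — where the strong convergence $s_j\to 0$ is precisely what lets the cross term vanish against the merely weakly convergent $u_j$ — and invoking maximality of $\cA$ gives $(u^\dagger,0)\in\gra\cA$. Combining, $\cC w^*\in(\cM+\cA)u^\dagger$, so $u^\dagger=(\cM+\cA)^{-1}\cC w^*=u^*$ by single-valuedness; the weak cluster point being unique, the whole sequence converges weakly to $u^*$.

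The delicate step is this last limiting argument. One is tempted to deduce the second assertion from weak continuity of $(\cM+\cA)^{-1}\cC$, but Lipschitz maximal monotone operators need not be weakly sequentially continuous, so that route fails. The correct mechanism is that Lipschitzness is used only to secure boundedness, while the identification of the limit along the degenerate directions of $\ker\cM$ — which are invisible to the reduced scheme — is carried out through maximality of $\cA$ and the strong null convergence of the residuals $\widetilde{\mathcal{T}}w^k-w^k$. I expect the only points requiring genuine care to be the verification of $s_j=-\cC(\widetilde{\mathcal{T}}w^{k_j}-w^{k_j})$ and the passage to the limit in $\gra\cA$.
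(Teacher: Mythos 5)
Your proof is correct. The paper does not actually spell out a proof of this theorem---it defers to \cite[Theorems 2.14 and Corollary 2.15]{bredies2021degenerate}---but your argument (reduction via Lemma~\ref{lem:PushForward} to a Krasnosel'skii--Mann iteration for the firmly nonexpansive $\widetilde{\mathcal{T}}$ with $\Fix\widetilde{\mathcal{T}}\neq\emptyset$ obtained from the intertwining relation $\cC^*\mathcal{T}=\widetilde{\mathcal{T}}\cC^*$, the identification $u^*=(\cM+\cA)^{-1}\cC w^*\in\zer\cA$ from $\cM u^*=\cC w^*$, and, for part~2, the cluster-point argument in which Lipschitzness is used only for boundedness while the limit is identified through maximal monotonicity and the strongly vanishing residuals $s_j=-\cC(\widetilde{\mathcal{T}}w^{k_j}-w^{k_j})$) is precisely the route that framework takes, so there is nothing substantive to add.
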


	\noindent We refer to \cite[Theorem 2.14]{bredies2021degenerate} and \cite[Corollary 2.15]{bredies2021degenerate} for a proof and further comments.

	\subsection{Frugal resolvent splitting methods}\label{sec:frsm}
	To tackle the $N$-operator problem \eqref{eq:Nop} we consider the class of FRS methods introduced by Ryu in \cite{Ryu} and, later, further investigated in \cite{malitsky2021resolvent}. Here, for reader's convenience, we outline the main properties and results on this class of methods, referring for further details to \cite{malitsky2021resolvent}. Let $H$ be a Hilbert space and for $N\geq 1$ let $\MM_{N}$ be the set of all $N$-tuples of maximal monotone operators on $H$.

	\begin{defi}[Fixed-point encoding]
		Let $\bD$ and $H$ be Hilbert spaces, a pair of operators $(T, S)$, with $T: \MM_{N}\times \bD \to \bD$ and $S: \MM_{N}\times \bD \to H$, is a fixed-point encoding for $\MM_{N}$ if, for all $\mathbf{A}=(A_1, \dots, A_N) \in \MM_{N}$, the following hold
		\begin{enumerate}
			\item $\Fix T(\mathbf{A}, \cdot) \neq \emptyset$ if and only if $
				      \zer \left(A_1+\cdots + A_N\right) \neq \emptyset,$
			\item If $\bw = T(\mathbf{A}, \bw)$, then $S(\mathbf{A}, \bw) \in \zer \left(A_1+\cdots + A_N\right)$.
		\end{enumerate}
		The map $T$ is called \textit{fixed-point operator}, and $S$ is called \textit{solution operator}.
	\end{defi}
	Fixed-point encodings, and in particular fixed-point operators, naturally define an associated fixed-point algorithm, namely for any $\mathbf{A}\in\MM_N$,
	\begin{equation}\label{eq:fixed_point_algorithm}
		w^{k+1}=T(\mathbf{A},w^k), \quad \text{for} \ w^0 \in \bD.
	\end{equation}
	\begin{defi}[Unconditional stability]
		The fixed-point encoding $(T,S)$ is \emph{unconditionally stable} if for any starting point $w^0\in \bD$ and any $\mathbf{A}=(A_1,\dots, A_N)\in\MM_N$ with $\zer (A_1+\cdots+A_N)\neq \emptyset$, the corresponding fixed-point algorithm \eqref{eq:fixed_point_algorithm} weakly converges to a fixed-point of $T(\mathbf{A},\cdot)$.
	\end{defi}
	For the case $N=1$ we have for instance: $\mathbf{A} = (A_1)$, $T(\mathbf{A}, \cdot) = J_{A_1}$ and $S(\mathbf{A}, \cdot) = I$, where $I$ is the identity operator on $H$, which corresponds to the proximal point algorithm. For $N=2$, we can choose: $\mathbf{A} = (A_1, A_2)$, $T(\mathbf{A}, \cdot) = I+J_{A_2}\left(2J_{A_1}-I\right)-J_{A_1}$ and $S(\mathbf{A}, \cdot) = J_{A_1}$, which yields the Douglas--Rachford algorithm. We also notice that in the two examples above the operators $T$ and $S$ can be evaluated efficiently applying successively (and only once) $J_{A_1}$ and $J_{A_2}$, which are assumed to be simple enough. This idea can be fixed by a definition.
	\begin{defi}[Frugal resolvent splitting] We say that a fixed-point encoding $(T, S)$ is a resolvent splitting if, for all $\mathbf{A}=(A_1, \dots, A_N) \in \MM_{N}$, there is a finite procedure that evaluates $T(\mathbf{A}, \cdot)$ and $S(\mathbf{A}, \cdot)$ at a given point that uses only vector addition, scalar multiplication, and the resolvents of $A_1, \dots, A_N$. A resolvent splitting is \textit{frugal} if, in addition, each of the resolvents of $A_1, \dots, A_N$ is evaluated exactly once.
	\end{defi}
	Given $\mathbf{A}=(A_1,\dots, A_N)\in \MM_N$ with $\zer(A_1+\cdots+A_N)\neq \emptyset$, a FRS method is the fixed-point algorithm associated with a frugal resolvent splitting $(T,S)$, with $T(\mathbf{A}, \cdot):\bD\to \bD$. Note that, roughly speaking, if the space $\bD$ is \textit{large}, implementing a FRS method may lead to huge memory requirements. For this reason, one should put adequate care on the definition of $\bD$.
	\begin{defi}[Lifting]\label{def:lifting}
		Let $d\in \mathbb{N}$. A fixed-point encoding $(T, S)$ has a $d$-fold lifting for $\MM_{N}$ if $\bD = H^d$.
	\end{defi}
	A ground-breaking series of results initiated by Ryu in \cite{Ryu} for the three-operator problem and later extended by Malitsky and Tam in \cite{malitsky2021resolvent} for the general problem states that there is an inherent \emph{lower} bound on the number of variables, i.e.~$d$, for an unconditionally stable FRS method.
	\begin{theorem}[Minimal lifting \cite{malitsky2021resolvent, Ryu}] Let $(T, S)$ be an unconditionally stable FRS for $\MM_{N}$ with a $d$-fold lifting. If $N \geq 2$, then $d\geq N-1$.
	\end{theorem}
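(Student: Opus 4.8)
The plan is to turn \emph{frugality} into a rigid algebraic normal form for $T$ and $S$, and then to count the degrees of freedom contained in the dual certificate that any fixed point is obliged to carry. Since the conclusion only concerns the number $d$ of copies, I would first reduce to $H=\mathbb{R}$ (a general method restricted to operators supported on a fixed line behaves as a scalar one), so that $\bD=\mathbb{R}^{d}$ and dimension counting becomes available.

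\emph{Step 1 (normal form).} Fix $\mathbf{A}\in\MM_{N}$. The only admissible operations are vector additions, scalar multiplications by constants of the method, and the resolvents $J_{A_1},\dots,J_{A_N}$, each used exactly once. Hence the evaluation of $w\mapsto T(\mathbf{A},w)$ is a straight-line program whose data-dependency graph is fixed, finite and acyclic, with the resolvent calls as internal nodes. Topologically sorting this graph and relabelling the operators, I may assume the computation takes the sequential form
\begin{equation*}
z_i=M_i w+\sum_{j<i}C_{ij}\,x_j,\qquad x_i=J_{A_i}(z_i)\quad(i=1,\dots,N),
\end{equation*}
followed by fixed affine read-outs $T(\mathbf{A},w)=Pw+\sum_i Q_i x_i$ and $S(\mathbf{A},w)=Rw+\sum_i S_i x_i$, where $M_i,C_{ij},P,Q_i,R,S_i$ are constants independent of $\mathbf{A}$. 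The only place the operators enter is through the defining relation $x_i=J_{A_i}(z_i)$, equivalently $z_i-x_i\in A_i(x_i)$.

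\emph{Step 2 (probing family and the count).} I would then feed the scheme the affine, strongly monotone tuple $\mathbf{A}(v)$ given by $A_i(x)=v_i+\mu x$ with $\mu>0$, parametrised by $v=(v_1,\dots,v_N)$ in the hyperplane $\Sigma=\{v:\sum_i v_i=0\}$. For such $v$ the unique solution is $u^{*}=0$, so $\zer(A_1+\cdots+A_N)\neq\emptyset$ and property~1 of a fixed-point encoding yields a fixed point $w^{*}=w^{*}(v)$. Granting the consensus property that at this fixed point every resolvent output equals the solution, $x_i=u^{*}=0$, the relation $z_i-x_i\in A_i(0)=\{v_i\}$ forces $z_i=v_i$, while the normal form gives $z_i=M_i w^{*}$ because all $x_j$ vanish. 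Thus $v=\mathcal{M}w^{*}$, where $\mathcal{M}:\mathbb{R}^{d}\to\mathbb{R}^{N}$ stacks the fixed rows $M_1,\dots,M_N$. As $v$ ranges over all of $\Sigma$ this forces $\Sigma\subseteq\Img\mathcal{M}$, hence $\operatorname{rank}\mathcal{M}\geq\dim\Sigma=N-1$, and therefore $d\geq N-1$.

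\emph{Main obstacle.} The delicate point, and the only place where \emph{unconditional stability} (rather than the bare encoding axioms) is needed, is the consensus property used in Step~2: that every fixed point of a convergent FRS has all its resolvent outputs coinciding with the common solution. The encoding only guarantees $S(\mathbf{A},w^{*})\in\zer(A_1+\cdots+A_N)$, so I would establish $x_i=u^{*}$ separately --- for the strongly monotone probes by passing to the limit in the guaranteed convergence $w^{k}\to w^{*}$ and exploiting the monotonicity inequalities that already underlie the convergence proof, which pin each $x_i$ to the unique zero. A secondary point to check is that the normal form of Step~1 is genuinely forced by the black-box nature of the resolvents, i.e.\ that the affine glue and the evaluation order cannot depend on $\mathbf{A}$; this is exactly what ``frugal'' (each resolvent exactly once, hence a single finite straight-line program valid for all tuples) buys us. Once these two items are secured, the dimension count is immediate.
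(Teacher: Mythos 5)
First, a point of reference: the paper does not prove this theorem at all --- it imports it from \cite{malitsky2021resolvent, Ryu} --- so your attempt has to be judged against the argument in those works. Your skeleton is sound and genuinely close in spirit to that argument: your Step 1 is exactly the normal form that the paper quotes as Lemma 3.1 of \cite{malitsky2021resolvent} (you rediscover it, though you drop the step sizes $\sigma_i$, a harmless omission), and, \emph{granted} the consensus property, your probing family and rank count in Step 2 are correct: $v=\mathcal{M}w^*$ for all $v$ in an $(N-1)$-dimensional space forces $\operatorname{rank}\mathcal{M}\geq N-1$, hence $d\geq N-1$. The reduction to $H=\mathbb{R}$ is also fine (or can be bypassed, since the normal-form coefficients are scalars).

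The genuine gap is the consensus property itself, and the repair you sketch for it would fail. Unconditional stability is a black-box statement about limits of the iterates; it does not supply any ``monotonicity inequalities that underlie the convergence proof'' --- the method is not assumed averaged, nonexpansive or Fej\'er monotone, so there is nothing of that kind to exploit. Moreover, knowing $w^k\to w^*$ only tells you that $w^*$ is a fixed point; the encoding axioms constrain the read-out $S(\mathbf{A},w^*)$, not the individual resolvent outputs, so passing to the limit cannot pin $x_i$ to the zero. The property you need is true, but it follows from \emph{frugality} together with the ``$\Fix\neq\emptyset\Rightarrow\zer\neq\emptyset$'' direction of the encoding axiom, not from stability. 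Since one pass of the method evaluates each $J_{A_i}$ at exactly one point $z_i^*$, any tuple whose resolvents take the same values at those points admits $w^*$ as a fixed point. Apply this twice: with $A_i''=N_{\{x_i^*\}}$ (normal cone of the singleton, whose resolvent is constantly $x_i^*$), the axiom forces $\zer(A_1''+\cdots+A_N'')\neq\emptyset$, i.e.\ $x_1^*=\cdots=x_N^*$; with the constant operators $A_i'\equiv a_i^*:=v_i+\mu x_i^*$ (the certificates $z_i^*-x_i^*$), it forces $\sum_i a_i^*=0$, and since $\sum_i v_i=0$ and $\mu>0$ this gives $x_i^*=0$ for all $i$. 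With this substitution trick your Step 2 closes completely --- and it reveals, incidentally, that unconditional stability is never actually used: the encoding axioms plus frugality suffice, which is precisely why the cited proofs can argue with such degenerate ``constant-resolvent'' probes and pure rank counting.
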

	The authors proceed to show that the bound $N-1$ is tight, and it is for this reason that we say that an unconditionally stable FRS method for $\MM_N$ has \emph{minimal variables} or a \emph{minimal lifting} when $\bD=H^{N-1}$, i.e., algorithm \eqref{eq:fixed_point_algorithm} requires storing $N-1$ variables living in $H$. Interestingly, in \cite{malitsky2021resolvent} the authors also provided an explicit characterization of the general structure of a FRS, which will be helpful for our subsequent discussion.
	\begin{lemma}[Lemma 3.1 in \cite{malitsky2021resolvent}]\label{lem:frs_char} Let $(T, S)$ be a FRS for $\MM_{N}$ with a $d$-fold lifting. Let $I$ be the identity on $H$ and fix $\mathbf{A}=(A_1, \dots, A_N)\in\MM_{N}$. Then, for all $\bw=(w_1,\dots,w_d)\in H^d$:
		\begin{equation*}
			T(\mathbf{A}, \bw) = (T_w \otimes I) \bw + (T_x \otimes I) \bx,
		\end{equation*}
		where $T_w\in \mathbb{R}^{d\times d}, \ T_x \in \mathbb{R}^{d\times N}$, and $\bx=(x_1, \dots, x_N) \in H^{N}$ is given by
		\begin{equation*}
			x_i = J_{\sigma_i A_i}\bigg(\sum_{h=1}^i l_{hi}x_h+\sum_{j=1}^{N-1} b_{ij}w_j\bigg),
		\end{equation*}
		where $\sigma_i > 0$ for all $i \in \{1, \dots, N\}$, $(l_{hi})_{hi}$ are the components of a (strictly) lower triangular matrix $L \in \mathbb{R}^{N\times N}$ and $(b_{ij})_{ij}=B \in \mathbb{R}^{N\times d}$.
	\end{lemma}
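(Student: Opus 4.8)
The plan is to take the hypothesis literally: a frugal resolvent splitting comes equipped with a finite procedure computing $T(\mathbf{A},\cdot)$ that uses only scalar multiplication, vector addition, and the resolvents $J_{\sigma_1 A_1},\dots,J_{\sigma_N A_N}$, each applied exactly once. I would first record this as a straight-line program: a finite list of vectors $v_0,v_1,\dots,v_m$ in which $v_0,\dots,v_{d-1}$ are the inputs $w_1,\dots,w_d$ and each later entry $v_t$ is obtained from strictly earlier entries either by (a) multiplication by a real scalar, (b) addition of two of them, or (c) an application of some resolvent $J_{\sigma_i A_i}$. Frugality forces exactly $N$ occurrences of type (c); I denote their outputs $x_1,\dots,x_N$ in the order in which they are produced, and after relabelling the operators accordingly I may assume this order is $1,2,\dots,N$. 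This relabelling only renames the bookkeeping indices of the fixed tuple $\mathbf{A}$ and does not alter the map being computed.

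The key step is a linearity claim, proved by induction along the list: every entry $v_t$ is a finite real-linear combination of the inputs $w_1,\dots,w_d$ together with those resolvent outputs $x_k$ produced strictly before $v_t$. The base case is immediate since $v_t\in\{w_1,\dots,w_d\}$, and operations of type (a)--(b) preserve the property because a linear combination of such combinations is again one of the same form and introduces no new resolvent output. For an operation of type (c) producing $x_i$, the output itself is a new atom, trivially $1\cdot x_i$, and --- this is where causality enters --- its \emph{input} is some earlier entry $v_s$, which by the inductive hypothesis is a linear combination of $w_1,\dots,w_d$ and of $x_1,\dots,x_{i-1}$ only, since $x_i,\dots,x_N$ have not yet been produced.

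This immediately yields the stated recursion. Writing the input of $J_{\sigma_i A_i}$ as $\sum_{h=1}^{i-1} l_{hi}x_h+\sum_{j=1}^{d} b_{ij}w_j$ and collecting the coefficients into a matrix $L=(l_{hi})$, which is strictly lower triangular precisely because $x_i$ depends only on $x_1,\dots,x_{i-1}$, together with a matrix $B=(b_{ij})$, gives
\[
	x_i = J_{\sigma_i A_i}\Big(\sum_{h=1}^{i} l_{hi}x_h+\sum_{j=1}^{d} b_{ij}w_j\Big),
\]
the $h=i$ term vanishing by strict triangularity. Finally, the output $T(\mathbf{A},\bw)\in H^d$ is itself a tuple of $d$ entries of the program, so each of its components is, by the same claim, a real-linear combination of $w_1,\dots,w_d$ and $x_1,\dots,x_N$; arranging these coefficients into $T_w\in\mathbb{R}^{d\times d}$ and $T_x\in\mathbb{R}^{d\times N}$ produces $T(\mathbf{A},\bw)=(T_w\otimes I)\bw+(T_x\otimes I)\bx$.

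I expect the only genuinely delicate point to be the formalization of \textbf{``finite procedure''}: one must commit to the interpretation that it is a fixed straight-line program with no value- or operator-dependent branching, so that the induction runs uniformly in $\bw$ and the constants $l_{hi},b_{ij},\sigma_i$ and the matrices $T_w,T_x$ are well defined for the fixed $\mathbf{A}$. Granting this, the entire argument is an induction over the program length; both the strict lower-triangularity of $L$ and the absence of affine constants (no primitive manufactures a nonzero vector out of nothing) fall out of the same causal and linear bookkeeping, without any separate estimate.
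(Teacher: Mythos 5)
Your proof is correct, but there is an important point about the comparison: the paper itself contains \emph{no proof} of this statement. It is imported verbatim as Lemma 3.1 of Malitsky and Tam \cite{malitsky2021resolvent}, so the only thing to compare against is the argument in that reference, and your straight-line-program induction is essentially a careful, more formal reconstruction of it: model the ``finite procedure'' as a fixed program over the primitives (scalar multiplication, addition, resolvents), observe that frugality yields exactly $N$ resolvent nodes, and prove by induction along the program that every intermediate value is a real-linear combination of the inputs $w_1,\dots,w_d$ and the previously produced resolvent outputs, which simultaneously yields the strictly triangular matrix $L$, the matrix $B$, and the output matrices $T_w$, $T_x$, with no affine constants possible. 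Two remarks on your write-up. First, the relabelling step deserves the emphasis you give it only in passing: strict lower-triangularity of $L$ in the \emph{given} indexing of $\mathbf{A}$ holds only after permuting the operators so that the index order agrees with the evaluation order of the procedure; this is exactly the convention the present paper adopts, since it subsequently uses $L$ to define a state graph equipped with a topological ordering, so your WLOG is consistent with how the lemma is used. Second, you silently correct a typo in the statement as reproduced here: for a general $d$-fold lifting the inner sum must be $\sum_{j=1}^{d} b_{ij}w_j$, consistent with $B \in \mathbb{R}^{N\times d}$, rather than $\sum_{j=1}^{N-1} b_{ij}w_j$; the two coincide only in the minimal-lifting case $d = N-1$, which is the case the paper ultimately cares about.
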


	\subsection{State and bilevel graphs}

	A directed graph is a pair $\mathcal{G} = (\mathcal{N}, \mathcal{E})$, where $\mathcal{N}$ is a finite set and $\mathcal{E}$ a subset of $\mathcal{N}\times \mathcal{N}$. The elements of $\mathcal{N}$ are the nodes of the graph, the elements of $\mathcal{E}$ its edges. Two nodes $i$ and $j$ are \textit{adjacent} if $(i, j) \in \mathcal{E}$ or $(j,i) \in \cE$. We denote the set of adjacent nodes to $i$ in $G$ by $\text{adj}(i; G)$. The \emph{degree} of a node $i$ is the cardinality of $\text{adj}(i; G)$ and we often denote it by $d_i$. A \emph{path} between $i_0$ and $i_n$ is a sequence of distinct nodes $(i_0, i_1, \dots, i_n)$ with $i_k$ and $i_{k+1}$ adjacent for $k = 0, \dots, n-1$. Two nodes of $\mathcal{N}$ are connected if there exists at least one path that has the two nodes as its end points. A graph is \emph{connected} if every pair of nodes is connected.

	For a directed graph $G=(\cN, \cE)$ an ordering of $\cN$ is a bijection $\alpha : \{1,\dots, N\}\to \cN$. The triple $(\cN, \cE, \alpha)$ is sometimes referred to as ordered graph. In the remainder of this paper, we will refer to the couple $G =(\cN, \cE)$ as a directed ordered graph via the identification $\cN = \{1,\dots, N\}$. A \emph{topological ordering} of $G$ is an ordering such that if $(i,j)\in \cE$, then $i<j$. For an ordered directed graph, the \emph{in-degree} (resp. the \emph{out-degree}) of $i$ is the cardinality of $\text{adj}(i, G)\cap \{ h \mid h<i \}$ (resp. the cardinality of $\text{adj}(i, G)\cap \{ h \mid i<h \}$) and is denoted by $d_i^+$ (resp. $d_i^-$).

	Recall from Lemma \ref{lem:frs_char} that each FRS can be characterized by means of four matrices $T_w$, $T_x$, $L$, $B$ and a vector $\sigma=(\sigma_1, \dots, \sigma_N)$, where $L$ is strictly lower triangular, i.e., with zero diagonal and upper triangular part. The structure of $L$ imposes a topological order on the evaluations of the resolvents of $A_1, \dots, A_N$. We can therefore associate to each FRS a directed graph with a topological ordering.

	\begin{defi}[State graph of a FRS]
		Let $(T, S)$ be a FRS for $\MM_N$ and let $L$ be the triangular matrix given by Lemma \ref{lem:frs_char}. The state graph associated with $(T, S)$ is an ordered directed graph $G=(\cN, \cE)$ with $\cN = \{1,\dots, N\}$ and $(i,j)\in \cE$ if and only if $l_{ij}\neq 0$. A FRS method has state graph $G$ if the corresponding FRS $(T,S)$ has state graph $G$.
	\end{defi}

	Our construction of the graph extension of the DRS method requires endowing the notion of state graph with an additional layer.

	\begin{defi}[Bilevel graphs]\label{def:bilevel_graph}
		Let $N\in \mathbb{N}$. A bilevel graph is a triple $biG = (\mathcal{N}, \mathcal{E}, \mathcal{E}')$ where $G=(\mathcal{N}, \mathcal{E})$ is a connected directed graph with a topological ordering $\cN = \{1, \dots, N\}$ and $G'=(\mathcal{N}, \mathcal{E}')$ is a directed connected subgraph. We call $G$ the state graph and $G'$ the base graph.
	\end{defi}
	In Section \ref{sec:general_case}, we show that for any bilevel graph $biG=(\cN, \cE, \cE')$ there exists an unconditionally stable FRS method for $\MM_N$ with a minimal lifting and state graph $G=(\cN, \cE)$. The choice of the base graph further discriminates the resulting schemes and is related to the operators $B$ and $T_x$ from Lemma \ref{lem:frs_char}.

	\section{Graph-based Douglas--Rachford}\label{sec:general_case}

	In our construction, the Laplacian of the base graph plays a key role.
	\begin{defi}
		[Graph Laplacian] Given a directed graph $G = (\mathcal{N}, \mathcal{E})$, with $\mathcal{N}=\{1, \dots, N\}$, the graph Laplacian of $G$ is the matrix $L=(L_{ij})_{ij} \in \mathbb{R}^{N\times N}$ defined by
		\begin{equation*}
			L_{ij}:=
			\begin{cases}
				d_i & \text{if} \ i=j,                    \\
				-1  & \text{if $i$ and $j$ are adjacent}, \\
				0   & \text{else}.
			\end{cases}
		\end{equation*}
		where $d_1, \dots, d_N \in \mathbb{R}$ are the degrees of the nodes $1, \dots, N$, respectively.
	\end{defi}

	\begin{lemma}\label{lem:onto_laplacian}
		Let $N>1$. For each connected graph $G=(\mathcal{N}, \mathcal{E})$ with $\cN = \{1,\dots, N\}$ there exists $z_1, \dots, z_N\in \mathbb{R}^{N-1}$ such that
		\begin{enumerate}
			\item[\textit{(a)}] $z_i \cdot z_j \neq 0$ if and only if $i$ and $j$ are adjacent;
			\item[\textit{(b)}] It holds $z_1+\dots +z_N = 0$;
			\item[\textit{(c)}] $\Span\{z_1, \dots, z_N\}=\mathbb{R}^{N-1}$.
		\end{enumerate}
	\end{lemma}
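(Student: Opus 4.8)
The plan is to realize the desired vectors as a Gram factorization of the graph Laplacian $L$ of $G$. Concretely, I would look for a matrix $Z \in \mathbb{R}^{(N-1)\times N}$ whose columns are $z_1, \dots, z_N$ and which satisfies $Z^\top Z = L$. The reason this is the right object is that $(Z^\top Z)_{ij} = z_i \cdot z_j$, so such a factorization turns all three requirements into statements about $L$. In particular, condition (a) becomes immediate: for $i \neq j$ one reads off $z_i \cdot z_j = L_{ij}$, which equals $-1 \neq 0$ exactly when $i$ and $j$ are adjacent and $0$ otherwise. (Condition (a) is understood for distinct indices; the diagonal entries are $z_i \cdot z_i = d_i > 0$, which is positive since $G$ is connected with $N > 1$ and hence has no isolated node.)

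To construct such a $Z$ I would invoke the standard spectral properties of the Laplacian of a connected graph: $L$ is symmetric and positive semidefinite, it annihilates the all-ones vector $\mathbf{1} = (1, \dots, 1)^\top$, and — crucially using connectivity — its kernel is exactly $\Span\{\mathbf{1}\}$, so that $\operatorname{rank} L = N-1$. Writing a spectral decomposition $L = \sum_{k=1}^{N-1} \lambda_k u_k u_k^\top$ with eigenvalues $\lambda_k > 0$ and orthonormal eigenvectors $u_1, \dots, u_{N-1}$ (the remaining eigenvector spanning the kernel), I would define $Z$ to be the $(N-1)\times N$ matrix whose $k$-th row is $\sqrt{\lambda_k}\, u_k^\top$. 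A one-line computation then yields $(Z^\top Z)_{ij} = \sum_{k=1}^{N-1} \lambda_k (u_k)_i (u_k)_j = L_{ij}$, so that $Z^\top Z = L$ and $z_i \in \mathbb{R}^{N-1}$ is the $i$-th column of $Z$.

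It remains to verify (b) and (c) from this factorization. For (b), I observe that $z_1 + \dots + z_N = Z\mathbf{1}$, whose $k$-th component is $\sqrt{\lambda_k}\, u_k^\top \mathbf{1}$; since each $u_k$ is an eigenvector for a nonzero eigenvalue it is orthogonal to $\ker L = \Span\{\mathbf{1}\}$, and the component vanishes, giving $z_1 + \dots + z_N = 0$. For (c), I use the rank identity $\operatorname{rank} Z = \operatorname{rank}(Z^\top Z) = \operatorname{rank} L = N-1$; since $Z$ maps into $\mathbb{R}^{N-1}$, its column span $\Span\{z_1, \dots, z_N\}$ is then a subspace of dimension $N-1$ of $\mathbb{R}^{N-1}$, hence all of $\mathbb{R}^{N-1}$.

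Overall this is a factorization-and-bookkeeping argument, so I do not expect a genuine obstacle. The single fact doing the real work is that connectivity forces $\operatorname{rank} L = N-1$ (equivalently $\ker L = \Span\{\mathbf{1}\}$): it is what guarantees both that the spectral factorization yields vectors living in exactly $\mathbb{R}^{N-1}$ and that the dimension count closing (c) goes through. I would cite this as the standard spectral characterization of the graph Laplacian rather than reprove it.
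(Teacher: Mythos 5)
Your proof is correct and follows essentially the same route as the paper: both factor the graph Laplacian as $L = ZZ^*$ (you construct the factor explicitly via the spectral decomposition, which is exactly how the paper suggests obtaining its ``onto decomposition''), read off (a) from $z_i \cdot z_j = L_{ij}$, and derive (b) and (c) from the connectivity facts $\ker L = \Span\{\mathbf{1}\}$ and $\operatorname{rank} L = N-1$. Your explicit handling of the diagonal case in (a) is a welcome touch of care that the paper glosses over.
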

	\begin{proof}
		Let $L=(L_{ij})_{ij}\in \mathbb{R}^{N\times N}$ be the Laplacian of the graph $G$. The matrix $L$ has the following properties: $L$ is symmetric and positive semidefinite; $L$, since $G$ is connected, has rank $N-1$; for $1\leq i\leq j \leq N$, $(i,j) \in \mathcal{E}$ if and only if $L_{ij} \neq 0$, and $\mathbf{1} \in \ker L$, where $\mathbf{1}=(1,\dots, 1)^*$, see, e.g., \cite{DEABREU200753}. Let $L = ZZ^*$ be an onto decomposition of $L$, where $Z\in \mathbb{R}^{N\times (N-1)}$ and set $z_1, \dots, z_N \in \mathbb{R}^{N-1}$ to be the rows $Z$. Using that $\ker Z^* = \ker L$ and the fact that $z_i\cdot z_j = L_{ij}$ it is clear that $z_1, \dots, z_N$ satisfy \textit{(a), (b)} and \textit{(c)}.
	\end{proof}

	A collection $z_1,\dots, z_N\in \mathbb{R}^{N-1}$ that satisfies \textit{(a)} and \textit{(c)} in Lemma \ref{lem:onto_laplacian} is often called a \emph{faithful orthogonal representation} of $G$, see \cite{LOVASZ1989439}. Here, we seek for a faithful orthogonal representation that also sums up to zero.

	\begin{remark}[Gossip matrices]
		The matrix $Z\in \mathbb{R}^{N\times (N-1)}$ introduced in the proof of Lemma \ref{lem:onto_laplacian}, with rows $z_i$, has full rank and $\ker Z^*=\Span\{\mathbf{1}\}$. In practice, $Z$ can be obtained deriving an onto decomposition of the Laplacian matrix of $G$, which can be done via spectral decomposition. If $G$ is a tree, one can take the incidence matrix (cf., Section \ref{sec:tree_graphs}). Note that, in general, one could replace the Laplacian with any symmetric positive semidefinite matrix $W=(W_{ij})_{ij}$ of rank $N-1$ such that for $1\leq i\leq j \leq N$, $(i,j) \in \mathcal{E}$ if and only if $W_{ij} \neq 0$, and $\mathbf{1} \in \ker W$, where $\mathbf{1}=(1,\dots, 1)^*$. This class of operators is often called \textit{Gossip matrices} \cite{EXTRA}. In the remainder of this paper, we stick to the Laplacian for simplicity.
	\end{remark}

	Let $G$ be a state graph for the $N$-operator problem \eqref{eq:Nop}. In order to find an unconditionally stable FRS method with minimal (i.e., $N-1$) variables and associated state graph $G$, we stick to the following methodology. We find a maximal monotone operator $\cA:\bH\to 2^{\bH}$ on the Hilbert space $\bH:=H^{2N-1}$ such that if $0\in \cA u$ then the first $N$ components of $u$ are equal and solve \eqref{eq:Nop} and, conversely, if $x \in H$ solves \eqref{eq:Nop} there exists $u \in \bH$ such that $0\in \mathcal{A}u$ and the first $N$ components of $u$ all coincide with $x$. Then, we design an admissible preconditioner $\mathcal{M}:\bH\to \bH$ for $\cA$ that admits an onto decomposition with $\bD := H^{N-1}$. In this way, the corresponding reduced PPP method according to \eqref{eq:reduced_PPP} would need to store exactly $N-1$ variables. Imposing certain structural properties to $\cA$ and $\cM$ the method will also meet the desired structure.

	\paragraph{Building $\mathcal{M}=\mathcal{C}\mathcal{C}^*$.} Consider the base graph $G'$. Let $Z=(Z_{ij})_{ij}\in \mathbb{R}^{N \times (N-1)}$ be a matrix whose rows are the vectors given by Lemma \ref{lem:onto_laplacian} applied to $G'$. Recall that, in particular, we can choose $Z$ such that $L=ZZ^*$ where $L$ is the graph Laplacian of $G'$. Let $\mathcal{C}$ be the following operator
	\begin{equation}\label{eq:def_C}
		\mathcal{C}^* =
		\begin{bmatrix}
			\mathcal{Z}^* & \mathcal{I}
		\end{bmatrix},
	\end{equation}
	where $\mathcal{Z}^*=Z^* \otimes I$, $\mathcal{I}$ is the identity map on $H^N$, and $I$ is the identity map on $H$. Once the operator $\mathcal{C}:\bD\to \bH$ is fixed, the preconditioner can be obtained as $\mathcal{M}=\mathcal{C}\mathcal{C}^*$, which yields
	\begin{equation}\label{eq:M_block}
		\mathcal{M}=
		\begin{bmatrix}
			\mathcal{L}   & \mathcal{Z} \\
			\mathcal{Z}^* & \mathcal{I}
		\end{bmatrix},
	\end{equation}
	where $\mathcal{L}=\cZ \cZ^*=L\otimes I$. Note that, since $\cC^*$ is onto, the factorization $\cM=\cC\cC^*$ is an onto decomposition of $\cM$ with $\cC:\bD\to \bH$.

	\paragraph{Building $\mathcal{A}$.}  Let $\mathbf{A}\in \MM_N$. We need to find a suitable maximal monotone operator $\mathcal{A}$ on $\bH$ such that the reduced PPP method with respect to $\cA$ and $\cM=\cC\cC^*$ defined in \eqref{eq:M_block} meets the desired structure. We first define
	\begin{equation*}
		\Sigma =
		\begin{bmatrix}
			0      & -L_{12} & \cdots                    & \hspace{-0.4cm}-L_{1N}    \\
			L_{21} & 0       &                           & \hspace{-0.4cm}\vdots     \\
			\vdots &         & \ddots                    & \hspace{-0.4cm}-L_{N-1,N} \\
			L_{N1} & \cdots  & \hspace{-0.4cm} L_{N,N-1} & \hspace{-0.4cm} 0
		\end{bmatrix}
	\end{equation*}
	where $L_{ij}$ are the components of the graph Laplacian of $G'$. We denote by $\boldsymbol{A}$ the diagonal operator $\boldsymbol{A}: (x_1, \dots, x_N)\mapsto (A_1x_1, \dots, A_Nx_N)$. Then, we set $\mathcal{B}_L := \boldsymbol{A} + \boldsymbol{\Sigma}$, where $\boldsymbol{\Sigma} =\Sigma \otimes I$. Now, consider the difference $\cE\setminus\cE':=\{(i,j) \in \cE \mid (i,j) \not \in \cE'\}$ and let $\mathcal{P}: H^{N}\to H^N$ be the operator defined as $\mathcal{P}=\sum_{(i,j)\in \mathcal{E}\setminus\cE'}\mathcal{P}^{ij}$ where, for each $(i,j) \in \mathcal{E}\setminus\cE'$, the operator $\mathcal{P}^{ij}$ is given by $\mathcal{P}^{ij}:=P^{ij}\otimes I$ with
	\begin{equation*}
		P^{ij}\in \mathbb{R}^{N\times N} \ \text{defined by:} \
		(P^{ij})_{hk} :=
		\begin{cases}
			1  & \text{if $h=i$ and $k = i$}, \\
			1  & \text{if $h=j$ and $k = j$}, \\
			-2 & \text{if $h=j$ and $k = i$}, \\
			0  & \text{else}.
		\end{cases}
	\end{equation*}
	Eventually, we define $\mathcal{A}_L := \mathcal{B}_L+\mathcal{P}$ and build the operator $\mathcal{A}$ on $\bH$ assembling $\mathcal{A}_L$ and $\cZ$ in the following block structure
	\begin{equation}\label{eq:A_block}
		\mathcal{A} :=
		\begin{bmatrix}
			\mathcal{A}_L & \hspace{-0.2cm}-\mathcal{Z} \\
			\mathcal{Z}^* & \boldsymbol{0}
		\end{bmatrix},
	\end{equation}
	where $\boldsymbol{0}$ is the zero operator on $\bD$.
	\begin{theorem}\label{thm:over_lifting_max_mon}
		Let $biG = (\cN, \cE, \cE')$ be a bilevel graph for the $N$-operator problem \eqref{eq:Nop} with respect to $\mathbf{A}\in \MM_N$ with $\zer (A_1+\dots+A_N)\neq \emptyset$. Let $\mathcal{A}:\bH\to 2^{\bH}$ be the operator defined in \eqref{eq:A_block}. Then, $\mathcal{A}$ is maximal monotone, and for $(x_1,\dots, x_N)\in H^N$, there exists $(v_1,\dots, v_{N-1})\in \bD$ such that $u = (x_1,\dots, x_N, v_1, \dots, v_{N-1})\in \bH$ satisfies $0 \in \mathcal{A}u$ if and only if $x = x_1 = \dots = x_N\in H$ solve \eqref{eq:Nop}.
		\begin{proof}
			First, we suppose that $u = (x_1, \dots, x_N, v_1,\dots v_{N-1})$ is such that $0 \in \mathcal{A}u$. Let us denote $\bx=(x_1, \dots, x_N)$ and $\bv=(v_1,\dots v_{N-1})$. By construction, we have that $\mathcal{Z}^*\bx=0$, which implies $x_1 = \dots = x_N=x$ (by definition and Lemma \ref{lem:onto_laplacian}\textit{(b)}). Now, the first block-row of \eqref{eq:A_block} yields
			\begin{equation*}
				0 \in \mathcal{A}_L\bx -\cZ \bv = \mathcal{B}_L \bx + \mathcal{P}\bx -\cZ\bv = \boldsymbol{A}\bx +\boldsymbol{\Sigma} \bx + \mathcal{P}\bx-\cZ\bv.
			\end{equation*}
			Thus, there exists $\boldsymbol{a} = (a_1, \dots, a_N)$ with $a_i \in A_ix$ for all $i \in \{1, \dots, N\}$ such that
			\begin{equation}\label{eq:max_mon_proof}
				\boldsymbol{0} = \boldsymbol{a} +\boldsymbol{\Sigma} \bx + \mathcal{P}\bx-\cZ\bv.
			\end{equation}
			Note that $\bx = (\mathbf{1}\otimes I)x$ and, thus, applying $(\mathbf{1}\otimes I)^* =(\mathbf{1}^*\otimes I)$ to $\boldsymbol{\Sigma} \bx$, as $\boldsymbol{\Sigma}$ is skew-symmetric, yields $0\in H$. Recall that $\mathcal{P} = P \otimes I$, with $P = \sum_{(i,j) \in \cE\setminus\cE'}P^{ij}$. It is clear that $\boldsymbol{1}^*P^{ij}\boldsymbol{1} = 0$ for each $(i,j) \in \cE\setminus\cE'$, hence $(\boldsymbol{1}\otimes I)^*\mathcal{P}\bx = 0 \in H$. Eventually, also $(\boldsymbol{1}\otimes I)^*\mathcal{Z}\bv=(\boldsymbol{1}^*Z\otimes I) \bv=0$, as $\ker Z^* = \Span\{\mathbf{1}\}$. In summary, applying $(\boldsymbol{1}\otimes I)^*$ to  \eqref{eq:max_mon_proof} we get
			\begin{equation*}
				0 = (\boldsymbol{1}\otimes I)^*\boldsymbol{a} =\sum_{i=1}^Na_i \in \sum_{i=1}^NA_ix.
			\end{equation*}
			On the other hand, if we have a solution $x$ of \eqref{eq:Nop}, i.e., there exist $a_i \in A_ix$ for all $i \in \{1,\dots, N\}$ such that $\sum_{i=1}^Na_i=0$, we define $\bx=(x_1,\dots,x_N)$ with $x_1 = \dots = x_N=x$ and $\boldsymbol{a}=(a_1, \dots, a_N)$. In this way, $\cZ^*\bx = 0$. To conclude, we only need to find $\bv= (v_1,\dots v_{N-1})$ such that \eqref{eq:max_mon_proof} holds. Such an element can be found as a solution of the linear system $\mathcal{Z}\bv = \boldsymbol{a} +\Sigma \bx + \mathcal{P}\bx$, which always exists. Indeed, since $\bx = (\mathbf{1}\otimes I)x$,  $\boldsymbol{\Sigma} = \Sigma \otimes I$ and $\mathcal{P} = P\otimes I$ with $\mathbf{1}^* \Sigma \mathbf{1} = \mathbf{1}^* P \mathbf{1} = 0$ (because $\Sigma$ is skew-symmetric and $P = \sum_{(i,j) \in \cE\setminus\cE'}P^{ij}$ with $\mathbf{1}^*P^{ij}\mathbf{1}=0$ for all $(i,j) \in \cE\setminus \cE'$), we have $(\mathbf{1}\otimes I)^*(\boldsymbol{a}+(\boldsymbol{\Sigma}+\mathcal{P})\bx) = \sum_{i=1}^N a_i = 0$, i.e., the right-hand side obeys $\boldsymbol{a} +\boldsymbol{\Sigma} \bx + \mathcal{P}\bx \in (\ker \cZ^* )^{\perp}= \Img \mathcal{Z}$.

			For the maximal monotonicity of $\mathcal{A}$, let us first note that since $\zer(A_1+\dots+A_N)\neq \emptyset$, $\dom \cA\neq \emptyset$. Recall from \eqref{eq:A_block} that we have
			\begin{equation}\label{eq:A_block_early}
				\mathcal{A} :=
				\begin{bmatrix}
					\mathcal{B}_L & \hspace{-0.2cm}-\mathcal{Z} \\
					\mathcal{Z}^* & \boldsymbol{0}
				\end{bmatrix}
				+
				\begin{bmatrix}
					\mathcal{P}    & \boldsymbol{0} \\
					\boldsymbol{0} & \boldsymbol{0}
				\end{bmatrix},
			\end{equation}
			where the zeros may be different but are denoted the same. In \eqref{eq:A_block_early}, $\mathcal{B}_L$ is maximal monotone being the sum of a maximal monotone operator $\boldsymbol{A}$ and a skew-symmetric linear map $\boldsymbol{\Sigma}$, see \cite[Corollary 24.4]{BCombettes}. The same reasoning applies to the first term in \eqref{eq:A_block_early}. Regarding the second term in \eqref{eq:A_block_early}, we only need to show that $\mathcal{P}$ is monotone. Indeed, monotone linear maps are also maximal \cite[Example 20.15]{BCombettes}. Recall that $\mathcal{P}=\sum_{(i,j)\in \cE\setminus\cE'}\mathcal{P}^{ij}$ with $\mathcal{P}^{ij} = P^{ij}\otimes I$. The claim follows from the fact that the operator $P^{ij}$ is monotone for all $(i,j) \in \cE\setminus\cE'$, indeed, we can easily see that
			\begin{equation*}
				\langle P^{ij}\xi, \xi\rangle = |\xi_i-\xi_j|^2 \quad \text{for all} \ \xi = (\xi_1, \dots, \xi_N)\in H^{N}.
			\end{equation*}
			The maximality of $\mathcal{A}$ follows for instance from \cite[Corollary 24.4]{BCombettes}.
		\end{proof}
	\end{theorem}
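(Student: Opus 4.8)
The plan is to prove the two assertions separately: first the characterization of the zeros of $\mathcal{A}$, then its maximal monotonicity. For the characterization I would read the two block-rows of the inclusion $0\in\mathcal{A}u$ directly off \eqref{eq:A_block}. Writing $\bx=(x_1,\dots,x_N)$ and $\bv=(v_1,\dots,v_{N-1})$, the lower block-row is the linear equation $\cZ^*\bx=0$, while the upper block-row is the inclusion $0\in\mathcal{A}_L\bx-\cZ\bv$. Since $\cZ^*=Z^*\otimes I$ and, by the construction of $Z$, $\ker Z^*=\Span\{\mathbf{1}\}$, the condition $\cZ^*\bx=0$ forces $\bx=\mathbf{1}\otimes x$, that is $x_1=\dots=x_N=x$; this is precisely where Lemma~\ref{lem:onto_laplacian}\textit{(b)} enters.

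For the forward direction I would expand $\mathcal{A}_L=\boldsymbol{A}+\boldsymbol{\Sigma}+\mathcal{P}$ and choose a selection $\boldsymbol{a}=(a_1,\dots,a_N)$ with $a_i\in A_ix$ realizing the inclusion, so that $\boldsymbol{0}=\boldsymbol{a}+\boldsymbol{\Sigma}\bx+\mathcal{P}\bx-\cZ\bv$. The idea is then to test this identity against $(\mathbf{1}\otimes I)^*$: skew-symmetry gives $\mathbf{1}^*\Sigma\mathbf{1}=0$, the entrywise identity $\mathbf{1}^*P^{ij}\mathbf{1}=0$ gives $(\mathbf{1}\otimes I)^*\mathcal{P}\bx=0$, and $Z^*\mathbf{1}=0$ gives $(\mathbf{1}\otimes I)^*\cZ\bv=0$, so that only $0=\sum_i a_i\in\sum_i A_ix$ survives and $x$ solves \eqref{eq:Nop}. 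For the converse, given $x$ with $a_i\in A_ix$ and $\sum_i a_i=0$, I would set $\bx=\mathbf{1}\otimes x$ (so the lower row holds) and reduce the upper row to solving the linear system $\cZ\bv=\boldsymbol{a}+\boldsymbol{\Sigma}\bx+\mathcal{P}\bx$ for $\bv$. The crux is solvability: the right-hand side lies in $\Img\cZ=(\ker\cZ^*)^\perp$ precisely because testing it against $(\mathbf{1}\otimes I)^*$ again yields $\sum_i a_i=0$ through the same three cancellations, so a solution $\bv$ exists and $u=(\bx,\bv)$ does the job.

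For maximal monotonicity I would rely on the additive splitting of $\mathcal{A}$ displayed in \eqref{eq:A_block_early}. The first summand there has $\mathcal{B}_L$ on its top-left diagonal block and the coupling $(-\cZ,\cZ^*)$ off the diagonal; I would decompose it as the block-diagonal maximal monotone operator $\mathrm{diag}(\boldsymbol{A},\boldsymbol{0})$ perturbed by the bounded linear map assembled from $\boldsymbol{\Sigma}$ and $(-\cZ,\cZ^*)$, which is skew-adjoint and hence maximal monotone with full domain, so that \cite[Corollary 24.4]{BCombettes} applies. For the second summand it suffices to show $\mathcal{P}$ is monotone: I would read off the quadratic form $\langle P^{ij}\xi,\xi\rangle=|\xi_i-\xi_j|^2\ge 0$ from the explicit entries of $P^{ij}$ and sum over $(i,j)\in\cE\setminus\cE'$, after which \cite[Example 20.15]{BCombettes} upgrades monotonicity of this bounded linear map to maximality; a final application of \cite[Corollary 24.4]{BCombettes}, using that the $\mathcal{P}$-summand is bounded with full domain, yields maximal monotonicity of $\mathcal{A}$. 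The step I expect to be most delicate is not any single estimate but the compatibility check in the converse of the characterization: verifying that the right-hand side of the $\bv$-system lies in $\Img\cZ=(\ker\cZ^*)^\perp$ is exactly where the defining properties of $Z$ from Lemma~\ref{lem:onto_laplacian} together with the zero-sum structure $\mathbf{1}^*\Sigma\mathbf{1}=\mathbf{1}^*P^{ij}\mathbf{1}=0$ must all be combined correctly.
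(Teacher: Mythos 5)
Your proposal is correct and follows essentially the same route as the paper: the same block-row reading of $0\in\mathcal{A}u$ with the $(\mathbf{1}\otimes I)^*$ cancellations in both directions (including the $\Img\cZ=(\ker\cZ^*)^\perp$ solvability argument for $\bv$), and the same splitting \eqref{eq:A_block_early} with skew-adjoint perturbation plus monotonicity of $\mathcal{P}$ via $\langle P^{ij}\xi,\xi\rangle=|\xi_i-\xi_j|^2$. Your only deviation is cosmetic: you absorb $\boldsymbol{\Sigma}$ and the $(-\cZ,\cZ^*)$ coupling into a single skew-adjoint linear map added to $\mathrm{diag}(\boldsymbol{A},\boldsymbol{0})$, which just makes explicit what the paper compresses into ``the same reasoning applies to the first term.''
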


	\paragraph{General iterations.} We can derive a closed-form expression for the reduced PPP method derived with respect to the maximal monotone operator $\mathcal{A}$ and the preconditioner $\mathcal{M}=\cC\cC^*$ given by \eqref{eq:A_block} and \eqref{eq:M_block}, respectively. Indeed, $\mathcal{M}+\mathcal{A}$ has a lower triangular structure and, thus, one can easily derive the PPP iteration according to \eqref{eq:PPP} with $\theta_k = 1$ for all $k \in \mathbb{N}$, which read
	\begin{equation*}
		\left\{
		\begin{aligned}
			\bx^{k+1} & = \left(\mathcal{L} +\mathcal{A}_L\right)^{-1}\mathcal{Z}(\mathcal{Z}^*\bx^k+\bv^k), \\
			\bv^{k+1} & = \mathcal{Z}^*\bx^k+\bv^k-2\mathcal{Z}^*\bx^{k+1}.                                  \\
		\end{aligned}
		\right.
	\end{equation*}
	with $u^k=(\bx^k, \bv^k)\in \bH$, $\bx^k\in H^{N}$ and $\bv^k \in \bD$. The onto decomposition $\mathcal{M}=\mathcal{C}\mathcal{C}^*$ yields a reduced algorithm according to \eqref{eq:reduced_PPP} with the substitution $\bw^k = \mathcal{C}^*u^k=\mathcal{Z}^*\bx^k+\bv^k$, resulting in
	\begin{equation}\label{eq:proposed_reduced}
		\left\{
		\begin{aligned}
			\bx^{k+1} & = \left(\mathcal{L}+\mathcal{A}_L\right)^{-1}\mathcal{Z}\bw^k, \\
			\bw^{k+1} & = \bw^k-\mathcal{Z}^*\bx^{k+1}.                                \\
		\end{aligned}
		\right.
	\end{equation}
	Recall that, by construction $\bw^{k+1} = \widetilde{\mathcal{T}}\bw^k$, with $\widetilde{\mathcal{T}}:=(I+\cC\rhd\cA)^{-1}$. Thus, for general relaxation parameters $\theta_k\in (0, 2]$ such that $\sum_k \theta_k (2-\theta_k)=+\infty$, we would simply have
	\begin{equation}\label{eq:insert_relaxation}
		\bw^{k+1}=\bw^k+\theta_k\left(\widetilde{\mathcal{T}}\bw^k-\bw^k\right)=\bw^k+\theta_k\left(\bw^k-\mathcal{Z}^*\bx^{k+1}-\bw^k\right)= \bw^k-\theta_k\mathcal{Z}^*\bx^{k+1},
	\end{equation}
	where, still, $\bx^{k+1}=\left(\mathcal{L}+\mathcal{A}_L\right)^{-1}\mathcal{Z}\bw^k$. Note that \eqref{eq:insert_relaxation} consists only in a simple modification to \eqref{eq:proposed_reduced}, and that, whenever $\theta_k\neq 1$, one should not confuse $\bx^{k+1}$ with the first $N$ components of $u^{k+1}$ according to \eqref{eq:PPP}.

	\smallskip
	The operator $\left(\mathcal{L}+\mathcal{A}_L\right)$ has a lower triangular structure and thus, it is easy to invert explicitly. Indeed, for $i \in \{1, \dots, N\}$, we have
	\begin{equation*}
		\bigg[\sum_{h=1}^{i-1} 2 L_{ih} x_h^{k+1} \bigg]- \bigg[\sum_{(h, i) \in \cE\setminus\cE'}2x_h^{k+1}\bigg]+(d_i'+\bar{d_i}) x_i^{k+1} +  A_ix_i^{k+1} \ni \sum_{h=1}^{N-1} Z_{ih} w_h^k,
	\end{equation*}
	where $d_i'$ is the degree of $i$ in the base graph $G' = (\cN, \cE')$ and $\bar{d_i}$ the degree of $i$ in the graph $(\cN, \cE\setminus\cE')$. Thus, $\bar{d_i}+d_i' = d_i$, i.e., the degree of $i$ in the state graph $G$. Therefore, using that $L_{ih} = -1$ if and only if $(h, i)\in \cE'$, we get
	\begin{equation*}
		-\bigg[\sum_{(h, i) \in \cE'} 2 x_h^{k+1}\bigg] - \bigg[\sum_{(h, i) \in \cE\setminus\cE'}2x_h^{k+1}\bigg] + d_i x_i^{k+1} + A_ix_i^{k+1} \ni \sum_{h=1}^{N-1} Z_{ih} w_h^k,
	\end{equation*}
	such that we can invert explicitly provided that $A_i$ is maximal monotone for every $i\in \{1,\dots, N\}$. Further, we can insert positive step-sizes $\sigma>0$ considering for every $i\in \{1,\dots, N\}$ the operator $\sigma A_i$ instead of $A_i$. Eventually, consider a bilevel graph $biG=(\mathcal{N}, \mathcal{E}, \cE')$, an onto decomposition $L = ZZ^*$ of the graph Laplacian of the base graph $G'=(\cN, \cE')$, a step size $\sigma >0$ and relaxation parameters $\theta_k\in (0,2]$ such that $\sum_k\theta_k(2-\theta_k)=+\infty$. Denoting by $d_1,\dots, d_N$ the degrees of the nodes in the state graph, we have the following FRS method with minimal variables.

	\begin{algorithm}[H]\label{alg:splitting_1}
		\textbf{Initialize:} $w_1^0, \dots, w_{N-1}^0 \in H$\\
		\For{$k=0, 1, \dots$}{
			\For{$i=1,\dots, N$}{

				\begin{equation*}
					x_i^{k+1} = J_{\frac{\sigma}{d_i}A_i}\bigg(\frac{2}{d_i}\sum_{(h,i)\in \cE}x_h^{k+1} + \frac{1}{d_i}\sum_{j=1}^{N-1} Z_{ij} w_j^k\bigg)
				\end{equation*}
			}
			\For{$j = 1, \dots, N-1$}{
				\begin{equation*}
					w_j^{k+1} = w_j^k - \theta_k \sum_{i = 1}^{N}Z_{ij}x_i^{k+1}
				\end{equation*}
			}
		}
		\caption{The graph-based Douglas--Rachford method associated to the bilevel graph $biG$.\vspace{0.1cm}}
	\end{algorithm}

	Interestingly, the choice of different base graphs leads to different methods, where the difference can be clearly seen in the update formula for the $w$ variables. The base graph will play a crucial role in the application to distributed optimization in Section \ref{sec:distributed}.

	Following from the general framework on degenerate PPP algorithms, we can easily establish convergence.

	\begin{theorem}\label{thm:convergence_graph_drs}
		Let $biG = (\cN, \cE, \cE')$ be a bilevel graph for \eqref{eq:Nop}. Let $w_1^k, \dots, w_{N-1}^k$ and $x_1^k, \dots, x_N^{k}$ be given by Algorithm \ref{alg:splitting_1} with respect to $(A_1,\dots, A_N)\in \MM_N$ with $\zer (A_1+\cdots+A_N)\neq \emptyset$. Then, for all $i\in \{1,\dots, N-1\}$ each variable $w^k_i$ converges weakly to some $w_i^*$ such that $x_1^*, \dots, x_N^*\in H$ defined by
		\begin{equation*}
			x_i^{*} = J_{\frac{\sigma}{d_i}A_i}\bigg(\frac{2}{d_i}\sum_{(h, i)\in \mathcal{E}}x_h^{*} + \frac{1}{d_i}\sum_{j=1}^{N-1} Z_{ij}w_j^*\bigg)
		\end{equation*}
		coincide for all $i \in \{1, \dots, N\}$ and solve \eqref{eq:Nop}. Moreover, all the sequences $\{x_i^{k}\}_k$ for $i \in \{1,\dots, N\}$ converge weakly to that solution.
	\end{theorem}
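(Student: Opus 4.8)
The plan is to recognize Algorithm \ref{alg:splitting_1} as nothing but the reduced PPP scheme \eqref{eq:insert_relaxation} associated with the maximal monotone operator $\mathcal{A}$ of \eqref{eq:A_block} and the preconditioner $\mathcal{M}=\mathcal{C}\mathcal{C}^*$ of \eqref{eq:M_block}, and then to invoke the general convergence Theorem \ref{thm:convergence}. Concretely, the component-wise updates in Algorithm \ref{alg:splitting_1} are exactly the explicit forward-substitution form of $\bx^{k+1}=(\mathcal{L}+\mathcal{A}_L)^{-1}\mathcal{Z}\bw^k$ and $\bw^{k+1}=\bw^k-\theta_k\mathcal{Z}^*\bx^{k+1}$ derived in the General iterations paragraph (with each $A_i$ replaced by $\sigma A_i$), so the two descriptions coincide and $\bw^k=\mathcal{C}^*u^k$ along the PPP sequence $\{u^k\}$; the relaxation parameters $\theta_k$ fixed in the algorithm satisfy the summability condition required by \eqref{eq:PPP}.

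To apply Theorem \ref{thm:convergence} I would first verify its hypotheses. Maximal monotonicity of $\mathcal{A}$ and $\zer\mathcal{A}\neq\emptyset$ follow directly from Theorem \ref{thm:over_lifting_max_mon}, using $\zer(A_1+\dots+A_N)\neq\emptyset$. That $\mathcal{M}$ has closed range is immediate, since $\mathcal{M}=\mathcal{C}\mathcal{C}^*$ with $\mathcal{C}^*$ onto is an onto decomposition. The key point to establish is that $\mathcal{M}$ is admissible, i.e.\ that $\mathcal{T}=(\mathcal{M}+\mathcal{A})^{-1}\mathcal{M}$ is single-valued and everywhere defined. For this I would exploit the block lower-triangular structure $\mathcal{M}+\mathcal{A}=\begin{bmatrix}\mathcal{L}+\mathcal{A}_L & \mathbf{0}\\ 2\mathcal{Z}^* & \mathcal{I}\end{bmatrix}$: inverting it amounts to solving $(\mathcal{L}+\mathcal{A}_L)\bx\ni \boldsymbol{p}$ and setting $\bv=\boldsymbol{q}-2\mathcal{Z}^*\bx$. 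Because the topological ordering of the state graph $G$ makes $\mathcal{L}+\mathcal{A}_L$ itself (block) lower triangular, the inclusion $(\mathcal{L}+\mathcal{A}_L)\bx\ni\boldsymbol{p}$ is solved one component at a time through the resolvents $J_{\frac{\sigma}{d_i}A_i}$, each single-valued and everywhere defined by maximal monotonicity of $A_i$; this yields a single-valued, full-domain, and in fact Lipschitz, inverse.

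With the hypotheses in place, Theorem \ref{thm:convergence}(1) gives that the reduced sequence $\bw^k\rightharpoonup\bw^*$ in $\bD=H^{N-1}$, which is equivalent to component-wise weak convergence $w_i^k\rightharpoonup w_i^*$ in $H$, and that $u^*=(\mathcal{M}+\mathcal{A})^{-1}\mathcal{C}\bw^*\in\zer\mathcal{A}$. By Theorem \ref{thm:over_lifting_max_mon}, the first $N$ components of $u^*$ coincide and solve \eqref{eq:Nop}; since those components are precisely $\bx^*=(\mathcal{L}+\mathcal{A}_L)^{-1}\mathcal{Z}\bw^*$, written out in forward-substitution form they are exactly the fixed-point relation in the statement, with $x_1^*=\dots=x_N^*$ solving \eqref{eq:Nop}.

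For the final claim I would invoke Theorem \ref{thm:convergence}(2), which needs $(\mathcal{M}+\mathcal{A})^{-1}$ to be Lipschitz; this was already observed above, the inverse being the forward substitution built from the nonexpansive resolvents $J_{\frac{\sigma}{d_i}A_i}$ and bounded linear maps. Noting that the first $N$ components of $(\mathcal{M}+\mathcal{A})^{-1}\mathcal{C}\bw^k$ equal $\bx^{k+1}=(\mathcal{L}+\mathcal{A}_L)^{-1}\mathcal{Z}\bw^k$, the weak convergence of $\{(\mathcal{M}+\mathcal{A})^{-1}\mathcal{C}\bw^k\}$ to $u^*$ yields $x_i^k\rightharpoonup x_i^*=x^*$ for every $i$. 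The main obstacle I anticipate is precisely the verification that $\mathcal{M}$ is admissible and that $(\mathcal{M}+\mathcal{A})^{-1}$ is single-valued and Lipschitz: this is where the topological ordering of $G$ and the lower-triangular assembly of $\mathcal{A}$ and $\mathcal{M}$ are essential, and it is the only nonroutine ingredient, the remainder being a direct appeal to the degenerate PPP theory.
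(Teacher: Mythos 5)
Your proposal is correct and follows essentially the same route as the paper's proof: recognizing Algorithm \ref{alg:splitting_1} as the reduced PPP method for $\mathcal{A}$ and $\mathcal{M}=\mathcal{C}\mathcal{C}^*$, invoking Theorem \ref{thm:over_lifting_max_mon} for maximal monotonicity and the characterization of $\zer\mathcal{A}$, and applying both parts of Theorem \ref{thm:convergence}. Your explicit verification of admissibility via the block lower-triangular structure of $\mathcal{M}+\mathcal{A}$ merely fills in a detail the paper dispatches with the remark that $(\mathcal{M}+\mathcal{A})^{-1}$ is a combination of resolvents and simple algebraic operations, hence Lipschitz.
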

	\begin{proof}
		The proof is an application of Theorem \ref{thm:convergence}. Indeed, Algorithm \ref{alg:splitting_1} is a reduced PPP method with respect to the operators $\mathcal{A}$ and $\mathcal{M}$ defined in \eqref{eq:A_block} and \eqref{eq:M_block} respectively and the onto decomposition $\cM = \cC\cC^*$ with $\cC:\bD\to \bH$ defined in \eqref{eq:def_C}. Furthermore,  $\mathcal{A}$ is maximal monotone with $\zer \cA \neq \emptyset$ from Theorem \ref{thm:over_lifting_max_mon} and $\Img\cM$ is closed by construction. The operator $(\mathcal{M}+\mathcal{A})^{-1}$ is a combination of resolvents and simple algebraic operations and is thus Lipschitz. Recall from \eqref{eq:proposed_reduced} that, by construction, $\bx^{k+1}=\left(\mathcal{L}+\mathcal{A}_L\right)^{-1}\mathcal{Z}\bw^k$ contains the first $N$ block-components of $\mathcal{T}u^k$, where $\{u^k\}_k$ is the corresponding PPP sequence, and that, since $\cC^*u^k = \bw^k$ for every $k\in \mathbb{N}$ (cf., \eqref{eq:reduced_PPP}), $\mathcal{T} u^k=\left(\cM+\cA\right)^{-1}\cC \bw^{k}$ for all $k \in \mathbb{N}$. Thus, from part 2.~of Theorem \ref{thm:convergence}, we have   $\bx^{k+1}\rightharpoonup \bx^*$, with $(\bx^*, \bv^*)\in \zer \cA$ for some $\bv^*\in \bD$. Note as well that part 1.~of Theorem \ref{thm:convergence} yields that all the sequences $\{w_i^k\}_k$ converge weakly to some elements $w_i^*$ for all $i \in \{1, \dots, N-1\}$, and, denoting by $\bw^* = (w_1^*, \dots, w_{N-1}^*)$, we also have $\bx^* = \left(\mathcal{L} +\mathcal{A}_L\right)^{-1}\mathcal{Z}\bw^*$. The claim follows applying again Theorem \ref{thm:over_lifting_max_mon}.
	\end{proof}
	\begin{corollary}\label{cor:existence_methods}
		Let $G=(\cN, \cE)$ be a directed connected graph with a topological ordering on $N$ nodes. Then, there exists an unconditionally stable FRS method with minimal variables and state graph $G$.
	\end{corollary}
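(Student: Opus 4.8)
The plan is to reduce the statement to the machinery already developed for bilevel graphs, observing that the only freedom left to exercise is the choice of a base graph. Given the directed connected graph $G = (\cN, \cE)$ with its topological ordering, I would first promote it to a bilevel graph by selecting a base graph $G' = (\cN, \cE')$ that is a connected directed subgraph of $G$. Since $G$ is itself connected, the simplest admissible choice is $\cE' = \cE$, so that $biG = (\cN, \cE, \cE)$ satisfies Definition \ref{def:bilevel_graph} verbatim; if a sparser coupling is desired one may instead take $\cE'$ to be the edge set of any spanning tree of $G$. Either choice yields a legitimate bilevel graph whose state graph is the prescribed $G$.

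With a bilevel graph in hand, I would invoke the construction of Section \ref{sec:general_case}: form the operators $\cA$ and $\cM = \cC\cC^*$ as in \eqref{eq:A_block} and \eqref{eq:M_block}, and consider the associated reduced scheme, which is precisely Algorithm \ref{alg:splitting_1}. By Theorem \ref{thm:over_lifting_max_mon} the operator $\cA$ is maximal monotone and its zeros encode the solutions of \eqref{eq:Nop}, so the resulting pair $(T,S)$ is a fixed-point encoding; it is manifestly a resolvent splitting, and it is frugal because each resolvent $J_{A_i}$ is evaluated exactly once per iteration in the inner loop. Since the scheme stores only the $N-1$ variables $w_1, \dots, w_{N-1} \in H$, it has $\bD = H^{N-1}$ and hence minimal lifting, while unconditional stability is exactly the content of Theorem \ref{thm:convergence_graph_drs}.

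It remains to verify that the state graph of this method is indeed $G$, and this is the one point requiring a direct check against the characterization in Lemma \ref{lem:frs_char}. Inspecting the update for $x_i^{k+1}$ in Algorithm \ref{alg:splitting_1}, the argument of $J_{A_i}$ couples to $x_h^{k+1}$ precisely through the term $\tfrac{2}{d_i}\sum_{(h,i)\in\cE} x_h^{k+1}$, with a nonzero coefficient exactly when $(h,i)\in\cE$; in the notation of Lemma \ref{lem:frs_char} this reads $l_{hi}\neq 0$ if and only if $(h,i)\in\cE$, which is the defining condition for $G$ to be the state graph. I do not anticipate a genuine obstacle here: the substantive work—maximal monotonicity of $\cA$, the onto decomposition producing minimal variables, and the convergence guarantee—has already been carried out in Theorems \ref{thm:over_lifting_max_mon} and \ref{thm:convergence_graph_drs}, so the corollary amounts to recording that every connected topologically ordered $G$ admits a base graph and then reading off the coupling pattern of the algorithm.
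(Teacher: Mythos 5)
Your proposal is correct and follows essentially the same route as the paper: pick a connected subgraph $\cE'\subset\cE$ to form a bilevel graph, then read off frugality, minimal lifting ($\bD=H^{N-1}$), unconditional stability (Theorem \ref{thm:convergence_graph_drs}), and the state-graph property of Algorithm \ref{alg:splitting_1}. The only cosmetic differences are that the paper fixes a constant relaxation $\theta_k=\theta$ and writes the encoding explicitly as $T(\mathbf{A},\cdot)=I+\theta(\widetilde{\mathcal{T}}-I)$, $S(\mathbf{A},\cdot)=\pi_i(\mathcal{L}+\cA_L)^{-1}\cZ$, while you are slightly more explicit in checking $l_{hi}\neq 0 \Leftrightarrow (h,i)\in\cE$ against Lemma \ref{lem:frs_char}.
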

	\begin{proof}
		Let $\cE' \subset \cE$ and consider the bilevel graph $biG = (\cN, \cE, \cE')$. The graph-based DRS with bilevel graph $biG$ and relaxation parameters $\theta_k=\theta\in (0,2]$ for all $k\in \mathbb{N}$ is a FRS with respect to the fixed-point encoding $(T,S)$, with $T(\mathbf{A}, \cdot)=I+\theta(\widetilde{\mathcal{T}}-I)$ and $S(\mathbf{A}, \cdot )=\pi_i(\mathcal{L}+\cA_L)^{-1}\cZ$, where $\widetilde{\mathcal{T}}=(I+\cC^*\rhd \cA)^{-1}$ and $\pi_i:H^N\to H$ is the projection onto the $i^{th}$ component, for some $i\in \{1,\dots, N\}$. The fixed-point encoding $(T,S)$ is frugal and has state graph $G$ from Algorithm \ref{alg:splitting_1}, is unconditionally stable from Theorem \ref{thm:convergence_graph_drs} and has minimal variables as $\bD= H^{N-1}$.
	\end{proof}
	\begin{remark}\label{rem:independence_from_onto}
		Algorithm \ref{alg:splitting_1} does not depend on the onto decomposition of the Laplacian $L$ of the base graph. Given two different onto decompositions of $L$, say $L = ZZ^* = \widetilde{Z}\widetilde{Z}^*$ with $Z, \ \widetilde{Z}\in \mathbb{R}^{N\times (N-1)}$, from Proposition \ref{prop:uniqueness_onto}, there exists an orthogonal matrix $O\in \mathbb{R}^{(N-1)\times (N-1)}$ such that $\widetilde{Z} = ZO$. As before, let $\cZ, \ \widetilde{\cZ}, \ \mathcal{O}$ be the corresponding block operators on $H^{N}$ and $\{\widetilde{\bw}^k\}_k, \ \{\widetilde{\bx}^k\}_k$ and $\{\bw^k\}_k, \ \{\bx^k\}_k$ be the two sequences given by Algorithm \ref{alg:splitting_1} with respect to $\widetilde{Z}$ and $Z$, respectively. Note from \eqref{eq:proposed_reduced} that
		\begin{equation}\label{eq:independence_onto_1}
			\widetilde{\bx}^{k+1} = \left(\mathcal{L}+\mathcal{A}_L\right)^{-1}\widetilde{\mathcal{Z}}\widetilde{\bw}^k = \left(\mathcal{L}+\mathcal{A}_L\right)^{-1}\cZ \mathcal{O}\widetilde{\bw}^k,
		\end{equation}
		and that $\widetilde{\bw}^{k+1} =\widetilde{\bw}^k-\theta_k \mathcal{O}^*\cZ^*\widetilde{\bx}^{k+1}$, thus for every $k \in \mathbb{N}$,
		\begin{equation}\label{eq:independence_onto_2}
			\mathcal{O}\widetilde{\bw}^{k+1} =\mathcal{O}\widetilde{\bw}^k-\theta_k \cZ^*\widetilde{\bx}^{k+1}.
		\end{equation}
		From \eqref{eq:independence_onto_1} and \eqref{eq:independence_onto_2} we get that if $\bw^0 = \mathcal{O}\widetilde{\bw}^0$, then for every $k \in \mathbb{N}$, $\bw^k = \mathcal{O}\widetilde{\bw}^k$ and $\widetilde{\bx}^{k} = \bx^k$. Thus, the two choices generate the same sequences modulo an orthogonal transformation of the space.
	\end{remark}

	\subsection{Examples}\label{sec:examples}

	\paragraph{Tree base graphs.} Let $biG = (\cN, \cE, \cE')$ be a bilevel graph and assume that $G' = (\cN, \cE')$ defines a tree, i.e., $G'$ has no cycles. This case is particularly relevant for our work, because, for trees, an onto decomposition of the Laplacian of the base graph is simply given by the \emph{incidence matrix}, so that there is no need to factorize the Laplacian numerically.

	To define the incidence matrix of $G'$ we first need to order the edges from $1$ to $|\cE'|$. Then, we let $Z \in \mathbb{R}^{N\times |\cE'|}$ be the matrix such that $Z_{ij}=1$ if the $j^{th}$ edge leaves $i$,  $Z_{ij}=-1$ if the $j^{th}$ edge enters $i$, and $0$ otherwise. It is well known that if $G'$ is a connected tree, then $|\cE'|=N-1$ and thus the incidence matrix is full-rank. Furthermore, $Z$ is such that $ZZ^*$ is equal to the graph Laplacian of $G'$, thus, in this way, $L=ZZ^*$ is indeed an onto decomposition with $Z\in \mathbb{R}^{N\times(N-1)}$.

	Choosing a tree as a base graph, for the sake of notation, we can associate the variables with the edges in the base graph and rename $w_j$ as $w_{{(h,i)}}$, where $(h,i)$ is the $j^{th}$ edge in $\mathcal{E}'$.

	Given a bilevel graph $biG=(\mathcal{N}, \mathcal{E}, \cE')$, with a tree base graph $G'=(\mathcal{N}, \mathcal{E}')$, a step-size $\sigma>0$ and relaxation parameters $\theta_k\in(0,2]$ such that $\sum_k\theta_k(2-\theta_k)=+\infty$, denoting by $d_1,\dots, d_N$ the degrees of the nodes in the state graph, Algorithm \ref{alg:splitting_1} turns into the FRS method with minimal variables as presented in Algorithm \ref{alg:algorithm_trees}.

	\begin{algorithm}[H]\label{alg:algorithm_trees}

		\textbf{Initialize:} $w_{(h,i)}^0\in H$ for all $(h,i)\in \mathcal{E}'$

		\For{$k=0, 1, \dots$}{
			\For{$i=1,\dots, N$}{
				\begin{equation}\label{eq:tree_bg_update_x}
					x_i^{k+1} = J_{\frac{\sigma}{d_i}A_i}\bigg(\frac{2}{d_i}\sum_{(h,i)\in \cE}x_h^{k+1} +  \frac{1}{d_i}\bigg[\sum_{(i,j) \in \mathcal{E}'} w_{(i,j)}^{k}-\sum_{(h,i) \in \mathcal{E}'} w_{(h,i)}^k\bigg]\bigg)
				\end{equation}
			}
			\For{$(h,i)\in \cE'$}{
				\begin{equation}\label{eq:tree_bg_update_w}
					w_{(h,i)}^{k+1} = w_{(h,i)}^k + \theta_k(x_i^{k+1}-x_h^{k+1})
				\end{equation}
			}
		}
		\caption{Graph-based Douglas--Rachford with a tree base graph.}
	\end{algorithm}

	\paragraph{Ryu's splitting for $N$ operators.} To generalize the method introduced by Ryu in \cite{Ryu} for the $3$-operator problem to the $N$-operator problem, we consider a complete state graph with a star-shaped tree base graph having node $N$ as the root. In this way, the base graph edge set consists of $\cE' = \{(i,N) \mid i = 1,\dots, N-1\}$. With this choice and turning back again to the standard indexing of the  $w$ variables, we get
	\begin{equation*}
		\left\{
		\begin{aligned}
			x_i^{k+1}   & = J_{\frac{\sigma}{N-1}A_i}\bigg(\frac{2}{N-1}\sum_{h=1}^{i-1}x_h^{k+1} +  \frac{1}{N-1}w_i^k\bigg) \ \text{for} \ i\in \{1,\dots, N-1\}, \\
			x_N^{k+1}   & = J_{\frac{\sigma}{N-1}A_N}\bigg(\frac{2}{N-1}\sum_{h=1}^{N-1}x_h^{k+1} - \frac{1}{N-1}\sum_{j=1}^{N-1} w_j^k\bigg),                      \\
			w_{j}^{k+1} & = w_{j}^k + \theta_k(x_N^{k+1}-x_j^{k+1}) \ \text{for} \ j \in \{1,\dots, N-1\}.
		\end{aligned}
		\right.
	\end{equation*}
	\noindent Note, in fact, that if $N=3$, after a suitable rescaling of the $w$ variables, one gets the method in \cite[Section 4.1]{Ryu}. This is one possible way to correct the attempt in \cite[Remark 4.7]{malitsky2021resolvent}.

	\paragraph{Malitsky--Tam splitting as a proximal point method.} If we consider a sequential tree base graph, i.e., $\cE' :=\{(i,i+1) \mid i = 1, \dots, N-1\}$, and the state graph $\cE = \cE'\cup \{(1,N)\}$, Algorithm \ref{alg:algorithm_trees} turns into the Malitsky--Tam splitting introduced in \cite{malitsky2021resolvent}. We can therefore conclude that both the Ryu and the Malitsky--Tam splitting can be understood as proximal point methods, which answers an open question in the conclusions of \cite{malitsky2021resolvent}.

	\paragraph{Three operator splitting with complete base graph.} The proposed graph-based DRS method when applied to the three operator problem encompasses several already known extensions of the DRS method, namely: the sequential extension introduced in \cite{bredies2021degenerate} (choosing $\cE=\cE'=\{(1,2), (2,3)\}$), two well-known parallel extensions \cite[Section 9.1]{condat} (choosing $\cE=\cE' = \{(1,3), (2,3)\}$ or $\cE=\cE'=\{(1,2), (1,3)\}$), and the Ryu and the Malitsky--Tam methods (choosing, respectively, $\cE'=$ $\{(1,3)$ $,(2,3)\}$, $\cE=\{(1,3), (2,3), (1,2)\}$ and $\cE' = \{(1,2), (2,3)\}$, $\cE=\{(1,3), (2,3), (1,3)\}$). Note in particular that all the aforementioned choices feature tree base graphs. Yet, for the $3$-operator problem the proposed framework provides another interesting architecture: The case of a complete base graph. Here, as an onto decomposition of the Laplacian of the complete graph we can pick
	\begin{equation*}
		Z^* = \begin{bmatrix}
			\sqrt{2} & -\sqrt{1/2} & -\sqrt{1/2} \\
			0        & \sqrt{3/2}  & -\sqrt{3/2}
		\end{bmatrix}.
	\end{equation*}
	With this choice, considering $\widetilde{w}_1^k= \sqrt{2} w_1^k$ and $\widetilde{w}_2^k = \sqrt{2/3}w_2^k$ for every $k \in \mathbb{N}$, Algorithm \ref{alg:splitting_1} writes
	\begin{equation*}
		\left\{
		\begin{aligned}
			x_1^{k+1}             & = J_{\frac{\sigma}{2}A_1}\left(\tfrac{1}{2}\widetilde{w}_1^k\right),                                                                             \\
			x_2^{k+1}             & = J_{\frac{\sigma}{2}A_2} \left(x_1^{k+1}+\tfrac{3}{4}\widetilde{w}_2^k-\tfrac{1}{4}\widetilde{w}_1^k\right),                                    \\
			x_3^{k+1}             & =J_{\frac{\sigma}{2}A_3}\left(x_1^{k+1}+x_2^{k+1}-\tfrac{3}{4}\widetilde{w}_2^k-\tfrac{1}{4}\widetilde{w}_1^k\right),                            \\
			\widetilde{w}_1^{k+1} & = 	\widetilde{w}_1^k-\theta_k(2x_{1}^{k+1}-x_{2}^{k+1}-x_3^{k+1}), \quad \widetilde{w}_2^{k+1} = \widetilde{w}_2^k-\theta_k(x_2^{k+1}-x_3^{k+1}),
		\end{aligned}
		\right.\vspace{0.2cm}
	\end{equation*}
	where $\sigma>0$ is a positive step-size and $\{\theta_k\}_k$ in $(0,2]$ are positive relaxation parameters. Note that in Section \ref{sec:experiments}, we will see that considering fully connected base graphs can lead to faster methods.

	\subsection{Further properties}\label{sec:further_properties}

	The reduced PPP method, and in particular Algorithm \ref{alg:splitting_1}, inherits the well known general convergence guarantees of the proximal point algorithm. Indeed, if we assume that the relaxation parameters satisfy $0<\inf_k \theta_k \leq \sup_k\theta_k<2$, then a standard result is the asymptotic rate
	\begin{equation}\label{eq:rate_iterate}
		\|\widetilde{\mathcal{T}}\boldsymbol{w}^k-\boldsymbol{w}^k\|^2= o(k^{-1}) \quad \text{for $k \to +\infty$},
	\end{equation}
	see e.g., \cite[Theorem 1]{Davis2016}. Note that, as we set $\theta_k\in[\epsilon,2-\epsilon]$ for some $0<\epsilon <1$, then also $\|\boldsymbol{w}^{k+1}-\boldsymbol{w}^k\|^2=o(k^{-1})$. In our framework, the residual $\|\widetilde{\mathcal{T}}\bw^{k}-\bw^k\|^2$ has an elegant connection with an interesting quantity that measures how far the solution estimates $x_i^{k+1}$ are from consensus, namely, the \emph{state variance}, which we define for all $k \in \mathbb{N}$ as
	\begin{equation}\label{eq:state_variance}
		\Var(\bx^k):=\frac{1}{N}\sum_{i=1}^N \| x^{k}_i - \bar{x}^k \|^2, \quad \text{for} \ \bar{x}^k:= \frac{1}{N} \sum_{i=1}^N x_i^k.
	\end{equation}
	Indeed, we have the following result.
	\begin{prop}\label{prop:variance_residual}
		Let $N\geq 2$, let $biG = (\cN, \cE, \cE')$ be a bilevel graph for the $N$-operator problem \eqref{eq:Nop} for $(A_1,\dots, A_N)\in \MM_N$ with $\zer (A_1+\cdots+A_N)\neq \emptyset$. Let $\boldsymbol{x}^{k}=(x_1^k, \dots, x_N^k)$, $\boldsymbol{w}^k=(w_1^k,\dots, w_{N-1}^k)$ be the sequences generated by Algorithm \ref{alg:splitting_1} with step-size $\sigma>0$ and relaxation parameters $\{\theta_k\}_k$ in $(0,2]$ such that $\sum_{k=0}^{\infty}\theta_k(2-\theta_k)=+\infty$. Then
		\begin{equation}\label{eq:inequality_variance_residual}
			\Var(\bx^{k+1}) \leq  \frac{1}{\lambda_1N} \|\widetilde{\mathcal{T}}\bw^{k}-\bw^{k}\|^2 \quad \text{for all $k \in \mathbb{N}$},
		\end{equation}
		where $\lambda_1$ is the \emph{algebraic connectivity} of the base graph, i.e., the first nonzero eigenvalue of the graph Laplacian.
	\end{prop}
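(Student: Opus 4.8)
The plan is to reduce the inequality to a spectral estimate for the Laplacian of the base graph. The starting observation is that the residual $\widetilde{\mathcal{T}}\bw^k-\bw^k$ has the closed form obtained in \eqref{eq:insert_relaxation}, namely $\widetilde{\mathcal{T}}\bw^k-\bw^k = -\mathcal{Z}^*\bx^{k+1}$, where $\bx^{k+1}=(\mathcal{L}+\mathcal{A}_L)^{-1}\mathcal{Z}\bw^k$ is exactly the iterate computed in Algorithm~\ref{alg:splitting_1}. Since $\mathcal{Z}^* = Z^*\otimes I$ and $\mathcal{Z}\mathcal{Z}^* = ZZ^*\otimes I = L\otimes I = \mathcal{L}$ is the onto decomposition of the base-graph Laplacian, this immediately gives
\[
\|\widetilde{\mathcal{T}}\bw^k-\bw^k\|^2 = \|\mathcal{Z}^*\bx^{k+1}\|^2 = \langle \mathcal{Z}\mathcal{Z}^*\bx^{k+1},\bx^{k+1}\rangle = \langle \mathcal{L}\bx^{k+1},\bx^{k+1}\rangle.
\]
Thus it suffices to establish the pointwise bound $\lambda_1 N\,\Var(\bx)\le \langle \mathcal{L}\bx,\bx\rangle$ for every $\bx=(x_1,\dots,x_N)\in H^N$, and then evaluate it at $\bx=\bx^{k+1}$.

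For that second step I would exploit the eigenstructure of the scalar Laplacian $L$. Because $G'$ is connected, $L$ is symmetric positive semidefinite with $\ker L=\Span\{\mathbf{1}\}$ and eigenvalues $0=\lambda_0<\lambda_1\le\cdots\le\lambda_{N-1}$, where $\lambda_1$ is the algebraic connectivity. Choosing an orthonormal eigenbasis $v_0=\mathbf{1}/\sqrt{N},v_1,\dots,v_{N-1}$ of $L$ and writing $\bx=\sum_{m=0}^{N-1}v_m\otimes\xi_m$ with $\xi_m\in H$, one has $\langle \mathcal{L}\bx,\bx\rangle=\sum_{m=1}^{N-1}\lambda_m\|\xi_m\|^2\ge \lambda_1\sum_{m=1}^{N-1}\|\xi_m\|^2$. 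The component along $v_0$ is $\xi_0=(v_0^*\otimes I)\bx=\sqrt{N}\,\bar{x}$ with $\bar{x}=\frac1N\sum_i x_i$, so a direct computation identifies the remaining mass as $\sum_{m=1}^{N-1}\|\xi_m\|^2=\sum_{i=1}^N\|x_i\|^2-N\|\bar{x}\|^2=\sum_{i=1}^N\|x_i-\bar{x}\|^2=N\,\Var(\bx)$. Combining the two displays yields $\langle \mathcal{L}\bx,\bx\rangle\ge \lambda_1 N\,\Var(\bx)$.

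Chaining the residual identity with the pointwise bound evaluated at $\bx^{k+1}$ gives precisely \eqref{eq:inequality_variance_residual}. The computation is largely bookkeeping; the only point requiring care is the passage to the Hilbert-space–valued setting, i.e. verifying that the tensorization $\mathcal{L}=L\otimes I$ preserves the spectral gap $\lambda_1$ and that orthogonal projection onto $\ker\mathcal{L}=\Span\{\mathbf{1}\}\otimes H$ corresponds exactly to subtracting the componentwise mean $\bar{x}$. I expect no genuine obstacle here, since the eigendecomposition of $L\otimes I$ factors through that of $L$, so the scalar estimate transfers verbatim.
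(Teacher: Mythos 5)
Your proof is correct and follows essentially the same route as the paper: both start from the identity $\widetilde{\mathcal{T}}\bw^k-\bw^k=-\mathcal{Z}^*\bx^{k+1}$ and then apply the spectral gap of the base-graph Laplacian, the only difference being that you carry out the eigendecomposition of $L\otimes I$ explicitly (with Parseval and the identity $\sum_i\|x_i\|^2-N\|\bar{x}\|^2=N\Var(\bx)$), whereas the paper phrases the same estimate as $\lambda_1\|\bx^{k+1}-\bar{\bx}^{k+1}\|^2\leq\langle\mathcal{L}(\bx^{k+1}-\bar{\bx}^{k+1}),\bx^{k+1}-\bar{\bx}^{k+1}\rangle=\|\mathcal{Z}^*\bx^{k+1}\|^2$ using the projection onto $(\ker\mathcal{Z}^*)^\perp$.
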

	\begin{proof}
		From $\widetilde{\mathcal{T}}\bw^k=\bw^k-\mathcal{Z}^*\bx^{k+1}$ for all $k \in \mathbb{N}$, we deduce
		\begin{equation}\label{eq:rate_Z*x_discussion}
			\| \mathcal{Z}^* \bx^{k+1} \|^2 = \|\widetilde{\mathcal{T}}\bw^k-\bw^k\|^2.
		\end{equation}
		Since $\ker\mathcal{Z}^*=\{(x, \dots, x) \ | \  x \in H\}$, the projection onto the orthogonal complement of $\ker\mathcal{Z}^*$ of $\bx^{k+1}$ is simply $\bx^{k+1}- \bar{\bx}^{k+1}$ where $\bar{\bx}^{k+1} = (\bar{x}^{k+1}, \dots, \bar{x}^{k+1})\in H^{N}$, which, since $\mathcal{L} = \cZ\cZ^*$, gives
		\begin{align}\label{eq:variancebound_discussion}
			\lambda_1\|\bx^{k+1}-\bar{\bx}^{k+1}\|^2 & \leq  \langle \mathcal{L}(\bx^{k+1}-\bar{\bx}^{k+1}), \bx^{k+1}-\bar{\bx}^{k+1}\rangle \nonumber \\
			                                         & =  \|\mathcal{Z}^*(\bx^{k+1}-\bar{\bx}^{k+1})\|^2 = \|\cZ^*\bx^{k+1}\|^2.
		\end{align}
		The identity \eqref{eq:rate_Z*x_discussion} together with \eqref{eq:variancebound_discussion} yields \eqref{eq:inequality_variance_residual}.
	\end{proof}

	Note that Proposition \ref{prop:variance_residual} shows an interesting dependence on the \emph{algebraic connectivity} of the base graph, which we shall investigate better in the experiments (cf., Section \ref{sec:congestedtransport}). Additionally, from \eqref{eq:variancebound_discussion} we can already conclude that the state variance converges to zero with a $o(k^{-1})$ worst-case rate.

	We now address the converse question, that is, whether the state variance can be understood as a measure of convergence for Algorithm \ref{alg:splitting_1}. Here, another feature of the underlying bilevel graph turns out to be particularly relevant, namely, the \emph{unbalance} of the state graph:
	\begin{equation}
		U_G:=\sqrt{\frac{1}{N}\sum_{i=1}^N(d_i^--d_i^+)^2}
	\end{equation}
	where, for all $i\in \{1,\dots, N\}$, $d_i^+$ and $d_i^-$ denote the out-degree and the in-degree of node $i$ in $G$ respectively. We observe, first, that from the maximality of $A_1, \dots, A_N$ in \eqref{eq:Nop}, Algorithm \ref{alg:splitting_1} uniquely defines $N$ sequences $\{a_i^{k+1}\}_k$, with $a_i^{k+1} \in A_ix_i^{k+1}$ for $i \in \{1,\dots, N\}$ and all $k \in \mathbb{N}$. By construction, $\boldsymbol{a}^{k+1} := (a_1^{k+1}, \dots, a_N^{k+1})\in H^{N}$ is characterized as the unique element in $\boldsymbol{A}\bx^{k+1}$ that solves
	\begin{equation}\label{eq:definition_of_ai}
		\mathcal{L}\bx^{k+1} + \left(\boldsymbol{\Sigma}+ \mathcal{P}\right)\bx^{k+1} + \sigma\boldsymbol{a}^{k+1} = \mathcal{Z}\bw^{k} \quad \text{for all $k \in \mathbb{N}$}.
	\end{equation}
	In the following result, we show that the state variance provides an upper bound for the norm of $\sum_{i=1}^Na_i^{k+1}$.

	\begin{prop}\label{prop:rate_ai}
		Let $N\geq 2$, let $biG = (\cN, \cE, \cE')$ be a bilevel graph for the $N$-operator problem \eqref{eq:Nop} for $\mathbf{A}=(A_1,\dots, A_N)\in \MM_N$ with $\zer (A_1+\cdots+A_N)\neq \emptyset$. Let $\boldsymbol{w}^k=(w_1^k,\dots, w_{N-1}^k)$, $\bx^{k+1}=(x_1^{k+1},\dots, x_N^{k+1})$ and $\boldsymbol{a}^{k} = (a_1^{k}, \dots, a_N^{k})\in \boldsymbol{A}\bx^{k}$ be the sequences generated by Algorithm \ref{alg:splitting_1} with step-size $\sigma>0$ and relaxation parameters $\{\theta_k\}_k$ in $(0,2]$ such that $\sum_{k=0}^{\infty}\theta_k(2-\theta_k)=+\infty$. Then,
		\begin{equation}\label{eq:gradient_variance}
			\bigg\|\sum_{i=1}^N a_i^k\bigg\|^2 \leq \  \frac{U_G^2 N^2}{\sigma^2} \Var(\bx^k) \quad \text{for all $k \in \mathbb{N}$}.
		\end{equation}
		In particular, if $x_1^k = \dots = x_N^k =: x^*$ for some $k\in \mathbb{N}$, then $x^*$ is a solution to \eqref{eq:Nop}.
	\end{prop}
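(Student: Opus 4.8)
The plan is to apply the summation functional $(\mathbf{1}\otimes I)^*$, with $\mathbf{1}=(1,\dots,1)^*$, to the characterizing identity \eqref{eq:definition_of_ai} and track precisely which terms survive, much as in the proof of Theorem \ref{thm:over_lifting_max_mon} --- except that now $\bx^{k+1}$ need not be in consensus, so the skew-symmetric and penalty terms no longer vanish. Applying $(\mathbf{1}\otimes I)^*$, the term $(\mathbf{1}\otimes I)^*\mathcal{L}\bx^{k+1}$ drops out because $\mathbf{1}^*L=(L\mathbf{1})^*=0$ (the all-ones vector spans the kernel of the symmetric base-graph Laplacian), and the right-hand side $(\mathbf{1}\otimes I)^*\mathcal{Z}\bw^k$ drops out because $\mathbf{1}^*Z=(Z^*\mathbf{1})^*=0$, using $\ker Z^*=\Span\{\mathbf{1}\}$. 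Since $(\mathbf{1}\otimes I)^*\boldsymbol{a}^{k+1}=\sum_{i=1}^N a_i^{k+1}$, this leaves
\[
\sigma\sum_{i=1}^N a_i^{k+1} = -\big(\mathbf{1}^*(\Sigma+P)\otimes I\big)\bx^{k+1},
\]
so the whole argument reduces to identifying the single row vector $\mathbf{1}^*(\Sigma+P)$.

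This identification is the step I expect to carry the technical weight, since it requires careful bookkeeping of edge orientations. For the skew-symmetric part I would compute $(\mathbf{1}^*\Sigma)_m=\sum_{i<m}(-L_{im})+\sum_{i>m}L_{im}$ and use that $L_{im}=-1$ exactly when $i,m$ are adjacent in the base graph, so that this equals the number of in-neighbours minus out-neighbours of $m$ in $G'$, i.e. $d_m^+-d_m^-$ computed in $G'$. For the penalty part, a direct column sum gives $\mathbf{1}^*P^{ij}=e_j^*-e_i^*$ for every $(i,j)\in\cE\setminus\cE'$, whence $(\mathbf{1}^*P)_m$ is the in/out-degree imbalance of $m$ in $(\cN,\cE\setminus\cE')$. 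Because $\cE$ is the disjoint union of $\cE'$ and $\cE\setminus\cE'$, summing the two contributions reassembles the in- and out-degrees of the state graph, yielding $(\mathbf{1}^*(\Sigma+P))_m=d_m^+-d_m^-$ with degrees now taken in $G$. Hence
\[
\sigma\sum_{i=1}^N a_i^{k+1}=\sum_{m=1}^N (d_m^--d_m^+)\,x_m^{k+1}.
\]

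To finish, I would exploit that $\sum_{m}(d_m^--d_m^+)=0$ (total in- and out-degree both equal $|\cE|$), which allows recentring each estimate via $\sum_m(d_m^--d_m^+)x_m^{k+1}=\sum_m(d_m^--d_m^+)(x_m^{k+1}-\bar{x}^{k+1})$. A Cauchy--Schwarz estimate then gives
\[
\sigma^2\Big\|\sum_{i=1}^N a_i^{k+1}\Big\|^2\le\Big(\sum_{m}(d_m^--d_m^+)^2\Big)\Big(\sum_{m}\|x_m^{k+1}-\bar{x}^{k+1}\|^2\Big)=\big(N U_G^2\big)\big(N\Var(\bx^{k+1})\big),
\]
which is exactly \eqref{eq:gradient_variance} after reindexing $k+1\mapsto k$. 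The final claim is then immediate: if $x_1^k=\dots=x_N^k=:x^*$ then $\Var(\bx^k)=0$, forcing $\sum_i a_i^k=0$ with $a_i^k\in A_i x^*$, so $0\in\sum_{i=1}^N A_i x^*$ and $x^*$ solves \eqref{eq:Nop}.
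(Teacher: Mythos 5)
Your proposal is correct and follows essentially the same route as the paper's proof: applying $(\mathbf{1}\otimes I)^*$ to \eqref{eq:definition_of_ai}, identifying the column sums of $\Sigma+P$ with the in/out-degree imbalances $d_m^+-d_m^-$ of the state graph, recentring by $\bar{x}^{k+1}$ (the paper phrases this as consensus vectors lying in the kernel, you as the imbalances summing to zero), and concluding by Cauchy--Schwarz (the paper's operator-norm bound $\|(\mathbf{1}\otimes I)^*(\boldsymbol{\Sigma}+\mathcal{P})\|=\sqrt{N}U_G$ is the same estimate). The degree bookkeeping and the final consensus claim match the paper's argument exactly.
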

	\begin{proof}
		First, notice that by definition of $\Sigma$ and $P$, for all $j\in \{1,\dots, N\}$, we have
		\begin{align*}
			\sum_{i = 1}^N(\Sigma +P)_{ij} & = \bigg[\sum_{i>j} L_{ij}\bigg]-\bigg[\sum_{i<j} L_{ij}\bigg] + \sum_{i\geq j}\sum_{(h,k)\in \cE\setminus \cE'} P^{hk}_{ij}                                                         \\
			                               & = \bigg[\sum_{(j,i)\in \cE'}(-1)\bigg]+\bigg[\sum_{(i,j)\in \cE'}1\bigg]+\bar{d_{j}}+\sum_{(j,h)\in \cE\setminus \cE'} (-2) = (d_j')^{+}-(d_j')^{-}+(\bar{d_j})^{+}-(\bar{d_j})^{-} \\
			                               & = d_j^+-d_j^-,
		\end{align*}
		where $(d_j')^+, \ (d_j')^{-}, \ (\bar{d_j})^+, \ (\bar{d_j})^-, \ \bar{d_j}, \ d_j^+, \ d_j^-$ are respectively: the in- and out-degrees of node $j$ in $G'$, the in- and out-degrees of node $j$ in $(\cN, \cE\setminus \cE')$, the degree of node $j$ in $(\cN, \cE\setminus \cE')$, and the in- and out-degrees of node $j$ in $G$. Therefore, we have
		\begin{equation}\label{eq:unbalance_formulations}
			\|(\boldsymbol{1}\otimes I)^*(\boldsymbol{\Sigma} + \mathcal{P})\|=\sqrt{N} U_G.
		\end{equation}
		Since $\{(x, \dots, x) \mid  x \in H\} \subset \ker (\boldsymbol{1}\otimes I)^*(\boldsymbol{\Sigma}+\mathcal{P})$, using \eqref{eq:unbalance_formulations}, we get
		\begin{align*}
			\|(\boldsymbol{1}\otimes I)^*(\boldsymbol{\Sigma}+\mathcal{P})\boldsymbol{x}^k\| & =\|(\boldsymbol{1}\otimes I)^*(\boldsymbol{\Sigma}+\mathcal{P})(\boldsymbol{x}^k-\bar{\boldsymbol{x}}^k)\| \\ &\leq \|(\boldsymbol{1}\otimes I)^*(\boldsymbol{\Sigma}+\mathcal{P})\|\|\boldsymbol{x}^k-\bar{\boldsymbol{x}}^k\| = U_G\sqrt{N} \|\boldsymbol{x}^k-\bar{\boldsymbol{x}}^k\|.
		\end{align*}
		From \eqref{eq:definition_of_ai}, since $\mathbf{1}^*Z=0$ by construction, $(\mathbf{1}\otimes I)^*\mathcal{Z}\bw = ((\mathbf{1}^* Z)\otimes I)\bw = 0$ for every $\bw\in \bD$, hence it easily follows that $\sigma\sum_{i=1}^{N}a_i^k=-(\boldsymbol{1}\otimes I)^*(\boldsymbol{\Sigma}+\mathcal{P}) \boldsymbol{x}^k$, and thus \eqref{eq:gradient_variance}. The rest of the proof is straightforward.
	\end{proof}

	Proposition \ref{prop:rate_ai} apart from answering the natural question on whether consensus is reached only at a solution point, which is trivial to prove but perhaps not clear from Algorithm \ref{alg:splitting_1}, also yields that if the state variance is small, i.e., all the solution estimates $x_1^{k}, \dots, x_N^{k}$ are close to consensus, then $\sum_{i=1}^N a_i^{k}$ is close to zero as well. Thus, without further structure, the state variance can actually be considered as a residual for Algorithm \ref{alg:splitting_1}.

	Gathering Propositions \ref{prop:variance_residual} and \ref{prop:rate_ai} with \eqref{eq:rate_iterate}, we can easily conclude that
	\begin{equation}\label{eq:summary_inequalities}
		\frac{\sigma^2}{N^2}\bigg\|\sum_{i=1}^N a_i^{k+1}\bigg\|^2 \leq  U_G^2  \Var(\bx^{k+1}) \leq \frac{U_G^2}{\lambda_1N} \| \widetilde{\mathcal{T}}\bw^{k}-\bw^{k}\|^2 = o(k^{-1}) \quad \text{for $k\to +\infty$}.
	\end{equation}
	Thus, also $\sum_{i=1}^Na_i^{k+1}$ converges to zero strongly as $k\to +\infty$ with a $o(k^{-1/2})$ rate.

	We conclude this section with a discussion on further results assuming additional hypotheses on the operators $A_1, \dots, A_N$. First, we assume that $A_i = \partial f_i$ for some convex, proper and lower semi-continuous functions $f_1, \dots, f_N:H\to \mathbb{R}\cup \{+\infty\}$, so that \eqref{eq:Nop} turns into
	\begin{equation}\label{eq:objective_problem}
		\text{find} \ x \in H \ \text{such that:} \ 0 \in (\partial f_1+\dots+\partial f_N)x
	\end{equation}
	Note that \eqref{eq:objective_problem} is equivalent to minimizing $f:=f_1+\dots+f_N$ under mild regularity conditions, see, e.g., \cite[Corollary 16.38]{BCombettes}, which we shall implicitly assume, and, in that case, the condition $\zer(A_1+\cdots+A_N)=\zer(\partial f_1+\cdots +\partial f_N)\neq \emptyset$, is equivalent to the existence of minimizers of $f$.
	\begin{prop}[Rate on the objective function]\label{prop:rate_objective}
		Let $N\geq 2$, let $biG = (\cN, \cE, \cE')$ be a bilevel graph for \eqref{eq:objective_problem} and assume that $\zer(\partial f_1+\cdots+\partial f_N)\neq \emptyset$. Let $\boldsymbol{x}^{k}=(x_0^k, \dots, x_N^k)$, $\boldsymbol{w}^k=(w_1^k,\dots, w_N^k)$ be the sequences generated by Algorithm \ref{alg:splitting_1} with step-size $\sigma>0$ and relaxation parameters $\{\theta_k\}_k$ in $[\epsilon, 2-\epsilon]$ for some $0<\epsilon <1$. Let $j\in \{1,\dots,N\}$ and suppose $f_i$ is locally Lipschitz continuous for all $i\neq j$, then we have
		\begin{equation}\label{eq:rate_objective_function}
			f(x_j^k)-\inf f = o(k^{-1/2}).
		\end{equation}
		Furthermore, if also $f_j$ is locally Lipschitz then $f(\bar{x}^k)-\inf f = o(k^{-1/2})$.
	\end{prop}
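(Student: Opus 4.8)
The plan is to bound $f(x_j^k) - \inf f$ from above by quantities that the chain \eqref{eq:summary_inequalities} already certifies to be $o(k^{-1/2})$, and then to close the argument with the trivial lower bound $f(x_j^k) \geq \inf f$. Throughout I fix a minimizer $x^* \in \zer(\partial f_1 + \cdots + \partial f_N)$ and, for each $k$, the subgradients $a_i^k \in \partial f_i(x_i^k)$ singled out by \eqref{eq:definition_of_ai}. By Theorem \ref{thm:convergence_graph_drs} every $\{x_i^k\}_k$ converges weakly and is therefore bounded, so all iterates — and the averages $\bar{x}^k$, being convex combinations — lie in a fixed ball $B$, on which each locally Lipschitz $f_i$ is Lipschitz with some constant $\ell_i$. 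Since $\theta_k \in [\epsilon, 2-\epsilon]$, \eqref{eq:summary_inequalities} supplies the two rates I will use repeatedly: $\sqrt{\Var(\bx^k)} = o(k^{-1/2})$ and $\|\sum_{i=1}^N a_i^k\| = o(k^{-1/2})$. Note also $f(x_j^k) < \infty$, since $x_j^k \in \dom \partial f_j$ and every $f_i$ with $i \neq j$ is finite.

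First I would trade $f(x_j^k) = \sum_i f_i(x_j^k)$ for $\sum_i f_i(x_i^k)$ up to a controllable error. For $i \neq j$ the Lipschitz estimate on $B$ gives $|f_i(x_j^k) - f_i(x_i^k)| \leq \ell_i \|x_j^k - x_i^k\|$, and, via \eqref{eq:state_variance}, $\|x_j^k - x_i^k\| \leq \|x_j^k - \bar{x}^k\| + \|\bar{x}^k - x_i^k\| \leq 2\sqrt{N}\sqrt{\Var(\bx^k)} = o(k^{-1/2})$. The index $j$ needs no adjustment, so after cancellation $f(x_j^k) \leq \sum_{i=1}^N f_i(x_i^k) + o(k^{-1/2})$.

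Next I would estimate $\sum_i f_i(x_i^k) - \inf f$ by convexity. Summing the subgradient inequalities $f_i(x^*) \geq f_i(x_i^k) + \langle a_i^k, x^* - x_i^k\rangle$ yields $\sum_i f_i(x_i^k) - \inf f \leq \sum_i \langle a_i^k, x_i^k - x^*\rangle$. Splitting $x_i^k - x^* = (x_i^k - \bar{x}^k) + (\bar{x}^k - x^*)$, the second part collapses to $\langle \sum_i a_i^k, \bar{x}^k - x^*\rangle$, bounded by $\|\sum_i a_i^k\|\,\|\bar{x}^k - x^*\| = o(k^{-1/2})$ because $\bar{x}^k - x^*$ is bounded; the first part is $\sum_i \langle a_i^k, x_i^k - \bar{x}^k\rangle \leq (\max_i \|a_i^k\|)\sum_i \|x_i^k - \bar{x}^k\| \leq (\max_i \|a_i^k\|)\,N\sqrt{\Var(\bx^k)}$, which is $o(k^{-1/2})$ \emph{provided} the subgradients stay uniformly bounded.

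This boundedness is the one delicate point, because no Lipschitz control of $f_j$ is assumed in the first claim. For $i \neq j$ we have $\|a_i^k\| \leq \ell_i$, as subgradients of a function Lipschitz on $B$ are bounded by its Lipschitz constant. For $i = j$ there is no a priori bound, but writing $a_j^k = \sum_{i=1}^N a_i^k - \sum_{i \neq j} a_i^k$ and using that $\|\sum_i a_i^k\| = o(k^{-1/2})$ is in particular bounded gives $\|a_j^k\| \leq \sup_k\|\sum_i a_i^k\| + \sum_{i\neq j}\ell_i < \infty$; hence $\max_i \|a_i^k\|$ is uniformly bounded and the previous estimate holds. Chaining the bounds yields $f(x_j^k) \leq \inf f + o(k^{-1/2})$, which together with $f(x_j^k) \geq \inf f$ is exactly \eqref{eq:rate_objective_function}. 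For the last assertion, if $f_j$ is Lipschitz on $B$ too, the Lipschitz replacement of the second paragraph now applies to \emph{every} index, so $f(\bar{x}^k) = \sum_i f_i(\bar{x}^k) \leq \sum_i f_i(x_i^k) + o(k^{-1/2})$, and the identical sandwich gives $f(\bar{x}^k) - \inf f = o(k^{-1/2})$.
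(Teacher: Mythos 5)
Your proof is correct, and it rests on the same three pillars as the paper's: the subgradient inequality at a minimizer, Lipschitz transfer between the solution estimates, and the two rates packaged in \eqref{eq:summary_inequalities}. The one genuine difference is the centering of the decomposition. The paper splits $\sum_{i=1}^N \langle a_i^k, x_i^k - x^*\rangle$ around $x_j^k$, writing it as $\langle \sum_i a_i^k, x_j^k - x^*\rangle + \sum_{i\neq j}\langle a_i^k, x_i^k - x_j^k\rangle$; this is deliberately asymmetric so that $a_j^k$ only ever pairs with $x_j^k - x_j^k = 0$, and hence no bound on $\|a_j^k\|$ is ever needed (boundedness of $a_i^k$ for $i\neq j$ is read off from \eqref{eq:definition_of_ai}). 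You instead split around $\bar{x}^k$, which is symmetric and clean but forces you to control $\max_i \|a_i^k\|$ including $i=j$, where no Lipschitz hypothesis is available; your repair --- writing $a_j^k = \sum_i a_i^k - \sum_{i\neq j} a_i^k$ and using that $\|\sum_i a_i^k\| = o(k^{-1/2})$ is in particular bounded --- is valid and is precisely the extra step the paper's choice of center renders unnecessary. In exchange, your centering makes the final claim about $f(\bar{x}^k)$ fall out of the same estimate with no re-centering, whereas the paper handles it by one more application of the variance bound; the two routes are of comparable length, and both inherit the same implicit conventions (Lipschitz constants on a ball containing the bounded iterates, and the degenerate case $U_G = 0$ in \eqref{eq:summary_inequalities}, where one should quote Proposition \ref{prop:variance_residual} directly for the variance rate).
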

	\begin{proof}
		Let $x^*$ be a solution to \eqref{eq:Nop}. From convexity of $f_i$ we have for all $i\in \{1,\dots, N\}$ that
		\begin{equation}\label{eq:proof_objective}
			f_i(x_i^k)-f_i(x^*)\leq \langle a_i^k, x_i^k-x^*\rangle,
		\end{equation}
		where $a_i^k \in \partial f_i(x_i^{k})$ are defined by \eqref{eq:definition_of_ai}. Summing up for all $i\in \{1,\dots, N\}$ and using the local Lipschitz property of $f_i$ for all $i\neq j$, we get
		\begin{align}\label{eq:proof_objective2}
			f(x_j^k)-f(x^*) & =\sum_{i=1}^N\left( f_i(x_i^k)-f_i(x^*)+f_i(x_j^k)-f_i(x_i^k)\right) \nonumber \\ &\leq \sum_{i=1}^N \langle a_i^k, x_i^k-x^*\rangle + \sum_{i\neq j} L_i\|x_j^k-x_i^k\|,
		\end{align}
		where $L_i$ are the Lipschitz constants of $f_i$ for all $i\neq j$ on some ball containing $x_1^k,\dots,x_N^k$, for all $k\in \mathbb{N}$, which exists because $\{\boldsymbol{x}^k\}_k$ is a bounded sequence. Using \eqref{eq:summary_inequalities} three times together with the fact that $\{x_j^k-x^*\}_k$ and $\{a_i^k\}_k$ for all $i\neq j$ are bounded sequences (cf., \eqref{eq:definition_of_ai}), we have
		\begin{equation}\label{eq:obj_function_proof}
			\begin{aligned}
				\sum_{i=1}^N \langle a_i^k, x_i^k-x^*\rangle & = \bigg\langle \sum_{i=1}^N a_i^k, x_j^k-x^*\bigg\rangle+\sum_{i\neq j}\langle a_i^k,x_i^k-x_j^k\rangle  \\
				                                             & \leq \bigg\|\sum_{i=1}^N a_i^k \bigg\| \|x_j^k-x^*\|+\sum_{i\neq j}\|a_i^k\|\|x_i^k-x_j^k\|=o(k^{-1/2}).
			\end{aligned}
		\end{equation}
		Combining \eqref{eq:obj_function_proof} with \eqref{eq:proof_objective2} and again \eqref{eq:summary_inequalities} we get \eqref{eq:rate_objective_function}, since a solution of \eqref{eq:objective_problem} is also a zero for $\partial f$ and thus, a minimizer for $f$. The variance estimate in \eqref{eq:summary_inequalities} provides also the rate for $f(\bar{x}^k)-\inf f$ when $f_j$ is locally Lipschitz as well.
	\end{proof}

	Proposition \ref{prop:rate_objective} is in line with similar results on DRS and the more general forward--DRS scheme (see, e.g., \cite[Theorem 3.4, Corollary 3.5]{Davis_rates}). The next result shows that the convergence of the solution estimates is strong if at least one operator is \textit{uniformly monotone} (cf., \cite[Definition 22.1(iii)]{BCombettes}).
	\begin{prop}[Strong convergence]\label{prop:strongconvergence} Let $N\geq 2$, let $biG = (\cN, \cE, \cE')$ be a bilevel graph for the $N$-operator problem \eqref{eq:Nop} for $\mathbf{A}=(A_1,\dots, A_N)\in \MM_N$ with $\zer (A_1+\cdots+A_N)\neq \emptyset$. Assume that there exists $j \in \{1,\dots, N\}$ such that $A_j$ is uniformly monotone on every bounded set of $\dom \, A_j$. Then, for all $i \in \{1,\dots, N\}$ the sequences $\{x_i^k\}_{k}$ generated by Algorithm \ref{alg:splitting_1}, with step-size $\sigma>0$ and relaxation parameters $\{\theta_k\}_k$ in $[\epsilon, 2-\epsilon]$ for some $0<\epsilon <1$, converge strongly to a solution to \eqref{eq:Nop}.
	\end{prop}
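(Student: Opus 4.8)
The plan is to upgrade the weak convergence already furnished by Theorem~\ref{thm:convergence_graph_drs} to strong convergence by exploiting the uniform monotonicity of $A_j$. Recall that Theorem~\ref{thm:convergence_graph_drs} yields a solution $x^*$ of \eqref{eq:Nop} with $x_i^k\rightharpoonup x^*$ for every $i$, and that, by \eqref{eq:summary_inequalities} (which applies since $\theta_k\in[\epsilon,2-\epsilon]$), the state variance \eqref{eq:state_variance} satisfies $\Var(\bx^k)\to 0$; hence $\|x_i^k-\bar{x}^k\|\to 0$ \emph{strongly} for all $i$. Therefore it suffices to prove that $\|x_j^k-x^*\|\to 0$ for the distinguished index $j$: indeed, $\|\bar{x}^k-x^*\|\leq\|\bar{x}^k-x_j^k\|+\|x_j^k-x^*\|\to 0$ and then $\|x_i^k-x^*\|\leq\|x_i^k-\bar{x}^k\|+\|\bar{x}^k-x^*\|\to 0$ for every $i$. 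Denoting by $\phi$ the increasing, only-at-$0$-vanishing modulus of uniform monotonicity of $A_j$ on a bounded set containing the (bounded) sequence $\{x_j^k\}$ together with $x^*$, the whole task reduces to showing that the monotone pairing $\langle a_j^{k+1}-a_j^*,x_j^{k+1}-x^*\rangle\to 0$, since $\phi(\|x_j^{k+1}-x^*\|)\leq\langle a_j^{k+1}-a_j^*,x_j^{k+1}-x^*\rangle$.

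To obtain this, I would first fix subgradients $a_i^*\in A_ix^*$ with $\sum_{i=1}^N a_i^*=0$, which exist because $x^*$ solves \eqref{eq:Nop}, and sum the monotonicity inequalities of $A_1,\dots,A_N$, using uniform monotonicity only at the index $j$:
\begin{equation*}
\phi(\|x_j^{k+1}-x^*\|)\leq\sum_{i=1}^N\langle a_i^{k+1}-a_i^*,x_i^{k+1}-x^*\rangle.
\end{equation*}
Since each summand on the right is nonnegative by monotonicity of $A_i$, it is enough to prove that this sum tends to $0$. Writing $\bx^*=\mathbf{1}\otimes x^*$, the term $\sum_i\langle a_i^*,x_i^{k+1}-x^*\rangle$ vanishes in the limit because $\sum_i a_i^*=0$ annihilates the consensus part, leaving $\sum_i\langle a_i^*,x_i^{k+1}-\bar{x}^{k+1}\rangle\to 0$ by the vanishing variance.

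The main work is the term $\sum_i\langle a_i^{k+1},x_i^{k+1}-x^*\rangle=\sigma^{-1}\langle\sigma\boldsymbol{a}^{k+1},\bx^{k+1}-\bx^*\rangle$, which I would control by substituting the defining relation \eqref{eq:definition_of_ai}, namely $\sigma\boldsymbol{a}^{k+1}=\cZ\bw^k-\mathcal{L}\bx^{k+1}-(\boldsymbol{\Sigma}+\mathcal{P})\bx^{k+1}$, and treating the three resulting terms separately. The term with $\cZ\bw^k$ equals $\langle\bw^k,\cZ^*\bx^{k+1}\rangle$ (as $\cZ^*\bx^*=0$) and vanishes because $\bw^k$ is bounded while $\|\cZ^*\bx^{k+1}\|=\|\widetilde{\mathcal{T}}\bw^k-\bw^k\|\to 0$ by \eqref{eq:rate_Z*x_discussion}. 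The term with $\mathcal{L}=\cZ\cZ^*$ equals $\|\cZ^*\bx^{k+1}\|^2\to 0$, again using $\mathcal{L}\bx^*=0$. For the remaining $(\boldsymbol{\Sigma}+\mathcal{P})$ term I would split $\langle(\boldsymbol{\Sigma}+\mathcal{P})\bx^{k+1},\bx^{k+1}-\bx^*\rangle$ into a diagonal and a consensus part: skew-symmetry gives $\langle\boldsymbol{\Sigma}\bx^{k+1},\bx^{k+1}\rangle=0$, while $\langle\mathcal{P}\bx^{k+1},\bx^{k+1}\rangle=\sum_{(p,q)\in\cE\setminus\cE'}\|x_p^{k+1}-x_q^{k+1}\|^2\to 0$ by the vanishing variance; the consensus part $\langle(\boldsymbol{\Sigma}+\mathcal{P})\bx^{k+1},\bx^*\rangle=\langle(\mathbf{1}\otimes I)^*(\boldsymbol{\Sigma}+\mathcal{P})\bx^{k+1},x^*\rangle=-\sigma\langle\sum_i a_i^{k+1},x^*\rangle\to 0$, using the identity from the proof of Proposition~\ref{prop:rate_ai} together with $\|\sum_i a_i^{k+1}\|\to 0$ from \eqref{eq:summary_inequalities}.

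Putting these estimates together shows $\sum_i\langle a_i^{k+1}-a_i^*,x_i^{k+1}-x^*\rangle\to 0$, hence $\phi(\|x_j^{k+1}-x^*\|)\to 0$ and $x_j^k\to x^*$ strongly, from which strong convergence of all $\{x_i^k\}_k$ follows as explained in the first paragraph. The delicate step, and the only place where the graph structure genuinely enters, is the handling of the cross term $\langle(\boldsymbol{\Sigma}+\mathcal{P})\bx^{k+1},\bx^{k+1}-\bx^*\rangle$: here one must simultaneously use skew-symmetry of $\boldsymbol{\Sigma}$, the quadratic form $\langle\mathcal{P}\xi,\xi\rangle=\sum_{(p,q)\in\cE\setminus\cE'}\|\xi_p-\xi_q\|^2$, the vanishing of the residual, and the unbalance identity \eqref{eq:unbalance_formulations}; everything else is a routine consequence of the rates already recorded in \eqref{eq:summary_inequalities}.
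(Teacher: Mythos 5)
Your proposal is correct, and it reaches the same two pillars as the paper's proof (the uniform-monotonicity modulus $\phi$ applied to $x_j^k$, then the variance bound \eqref{eq:summary_inequalities} to transfer strong convergence to the other indices), but the middle of the argument is organized genuinely differently. The paper works in the lifted space $\bH$: it pairs the residual $\mathcal{M}(u^k-\mathcal{T}u^k)\in\mathcal{A}\mathcal{T}u^k$ against $\mathcal{T}u^k-u^*$, where $u^*=(\bx^*,\bv^*)\in\zer\mathcal{A}$ supplies both $\boldsymbol{a}^*$ and $\bv^*$; the two $\cZ$-cross terms cancel exactly, the term $\langle(\boldsymbol{\Sigma}+\mathcal{P})(\bx^{k+1}-\bx^*),\bx^{k+1}-\bx^*\rangle$ is simply discarded by its sign, and the whole left-hand side vanishes because $\mathcal{M}(u^k-\mathcal{T}u^k)=\mathcal{C}(\bw^k-\widetilde{\mathcal{T}}\bw^k)\to 0$ strongly while $\mathcal{T}u^k-u^*$ stays bounded — so only one vanishing quantity (the residual \eqref{eq:rate_iterate}) is needed. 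You instead never touch the $\bv$-variables: you choose $a_i^*\in A_ix^*$ with $\sum_i a_i^*=0$ directly from the fact that $x^*$ solves \eqref{eq:Nop}, substitute the relation \eqref{eq:definition_of_ai}, and then kill each resulting term by an explicit limit, using the residual rate for the $\cZ\bw^k$ and $\mathcal{L}$ terms, the vanishing variance for the $\mathcal{P}$ quadratic form and for the $a_i^*$ pairing, and the identity $\sigma\sum_i a_i^{k+1}=-(\mathbf{1}\otimes I)^*(\boldsymbol{\Sigma}+\mathcal{P})\bx^{k+1}$ from the proof of Proposition \ref{prop:rate_ai} for the consensus part. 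What the paper's route buys is brevity and robustness — sign information replaces several limit computations, and no appeal to Proposition \ref{prop:rate_ai} is needed; what your route buys is that the entire argument lives in $H^N$ at the level of the implementable algorithm, making it self-contained given the rates of Section \ref{sec:further_properties}, at the cost of more bookkeeping (and note that where the paper only needs $\langle(\boldsymbol{\Sigma}+\mathcal{P})(\bx^{k+1}-\bx^*),\bx^{k+1}-\bx^*\rangle\geq 0$, you prove the stronger statement that your corresponding term tends to zero, which is true but not necessary). One small point worth making explicit if you write this up: the passage from $\phi(\|x_j^{k+1}-x^*\|)\to 0$ to $\|x_j^{k+1}-x^*\|\to 0$ uses that $\phi$ is increasing and vanishes only at $0$, and the monotonicity inequality at index $j$ must be taken on a bounded subset of $\dom A_j$ containing $\{x_j^k\}_k\cup\{x^*\}$, which is exactly how the paper sets it up as well.
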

	\begin{proof}
		Let $\{u^k\}_k$ the corresponding PPP sequence according to \eqref{eq:PPP} and $u^*=(\bx^*, \bv^*)\in\zer \, \mathcal{A}$ be the weak limit of $\{\mathcal{T}u^k\}_k$ (cf., proof of Theorem \ref{thm:convergence_graph_drs}), then, by Theorem \ref{thm:over_lifting_max_mon}, $\bx^*=(\mathbf{1}\otimes I)x^*$ for some $x^*$ that solves \eqref{eq:Nop}. Consider the bounded set $S = \{x^*\} \cup \{x_j^k\}_k \subset \dom \, A_j$. By definition of uniform monotonicity, there exists an increasing function $\phi~:~\mathbb{R}_+ \to [0,+\infty]$ that vanishes only at $0$ such that
		\[ \langle a_j-a_j',x-x' \rangle \geq \phi\left(\|x-x'\|\right) \quad \text{for all} \ x, x', a_j, a_j' \ \text{such that:} \ a_j\in A_jx,~ a_j'\in A_jx'.\]
		By definition of $\mathcal{T}$ we have $ \mathcal{M}(u^k-\mathcal{T} u^k) \in \mathcal{A}\mathcal{T} u^k $, and consequently, by construction,
		\begin{equation*}
			\mathcal{M}(u^k-\mathcal{T} u^k) =
			\begin{bmatrix}
				(\boldsymbol{\Sigma}+\mathcal{P})\bx^{k+1}+\sigma\boldsymbol{a}^{k+1}-\cZ \bv^{k+1} \\ \cZ^* \bx^{k+1}
			\end{bmatrix},
		\end{equation*}
		where $\boldsymbol{a}^{k+1} \in \boldsymbol{A}\bx^{k+1}$ and $(\bx^{k+1}, \bv^{k+1})=\mathcal{T}u^k$. Thus, since $u^* \in \zer \mathcal{A}$, there exists $\boldsymbol{a}^*\in \boldsymbol{A}\bx^*$ such that $(\boldsymbol{\Sigma}+\mathcal{P})\bx^*+\sigma\boldsymbol{a}^{*}-\cZ \bv^* = 0$ and $\cZ^* \bx^* = 0$. Therefore, we have for all $k \in \mathbb{N}$ that
		\begin{align*}
			\langle \mathcal{M}(u^k-\mathcal{T} u^k),\mathcal{T} u^k-u^*\rangle & =\langle (\boldsymbol{\Sigma}+\mathcal{P})\left(\bx^{k+1}-\bx^*\right)+\sigma(\boldsymbol{a}^{k+1}-\mathbf{a}^*)-\cZ (\bv^{k+1}-\bv^*) , \bx^{k+1}-\bx^* \rangle        \\
			                                                                    & \quad +\langle\cZ^* \bx^{k+1}, \bv^{k+1}-\bv^*\rangle                                                                                                                   \\
			                                                                    & =\langle (\boldsymbol{\Sigma}+\mathcal{P})\left(\bx^{k+1}-\bx^*\right), \bx^{k+1}-\bx^*\rangle+\langle\sigma(\boldsymbol{a}^{k+1}-\mathbf{a}^*), \bx^{k+1}-\bx^*\rangle \\
			                                                                    & \quad -\langle \bv^{k+1}-\bv^* , \cZ^*\bx^{k+1} \rangle +\langle\cZ^* \bx^{k+1}, \bv^{k+1}-\bv^*\rangle.
		\end{align*}
		Now, using that $\mathcal{P}$ is monotone by construction (cf., proof of Theorem \ref{thm:over_lifting_max_mon}) and $\boldsymbol{\Sigma}$ is skew-symmetric yields $\langle (\boldsymbol{\Sigma}+\mathcal{P})\left(\bx^{k+1}-\bx^*\right), \bx^{k+1}-\bx^*\rangle\geq 0$. Therefore, by the uniform monotonicity of $A_j$,
		\begin{align*}
			\langle \mathcal{M}(u^k-\mathcal{T} u^k),\mathcal{T} u^k-u^*\rangle & \geq \langle\sigma(\boldsymbol{a}^{k+1}-\mathbf{a}^*), \bx^{k+1}-\bx^*\rangle = \sum_{i=1}^N \sigma \langle a_i^{k+1}-a_i^*, x_i^{k+1}-x_i^*\rangle \\
			                                                                    & \geq \sigma \phi(\|x_j^{k+1}-x^* \|).
		\end{align*}
		Using \eqref{eq:rate_iterate}, recalling that $\mathcal{M}(u^k-\mathcal{T} u^k)=\mathcal{C}(\boldsymbol{w}^{k}-\widetilde{\mathcal{T}}\boldsymbol{w}^{k})$ and the fact that $\mathcal{T} u^k\rightharpoonup u^*$ weakly in $\bH$, the left hand-side vanishes as $k\to +\infty$, thus $x_j^k \to x$ strongly in $\bH$. For all other sequences $\{x_i^k\}_k$, \eqref{eq:summary_inequalities} yields in particular $x_i^k-x_j^k \to 0$ for all $i \in \{1,\dots, N\}$. The thesis follows.
	\end{proof}

	\begin{remark}
		Under the assumptions of Proposition \ref{prop:strongconvergence}, assuming further structure on the function $\phi$ given by the definition of uniform monotonicity, we shall also provide a rate for $\|x_j^k-x^*\|$. In particular, if there is some $\mu>0$ such that we can choose $\phi(h)=\mu h^2$ for all $h\in \mathbb{R}_+$, then we get $\|x_j^k-x^*\|^2=o(k^{-1/2})$ for $k \to +\infty$, which is in line with previous results on DRS and forward--DRS (see, \cite[Theorem 4.1.3]{Davis_rates}).
	\end{remark}

	\section{Application to distributed optimization}\label{sec:distributed}

	Let $biG=(\mathcal{N}, \mathcal{E}, \cE')$ be a bilevel graph for the $N$-operator problem \eqref{eq:Nop} and suppose that each node in $\cN$ represents an agent. Each agent is independent and can do parallel, asynchronous computations. Further, every agent can send and receive data to adjacent nodes, and has to wait for the transfer to finish in case of reception. The agents are supposed to collaborate to solve \eqref{eq:Nop}. For each $i \in \mathcal{N}$, we assume that only agent $i$ knows the operator $A_i$, and can access it only through evaluation of its resolvent. Additionally, we assume that for each variable $w_j$, there is only one agent in charge of storing and updating it.

	\subsection{Tree base graphs}\label{sec:tree_graphs}
	The inherent sparsity of the incidence matrix allows us to easily turn Algorithm \ref{alg:algorithm_trees} into a simple fully distributed scheme. Indeed, from Algorithm \ref{alg:algorithm_trees} it is easy to notice that agent $i$ at iteration $k \in \mathbb{N}$, in order to compute \eqref{eq:tree_bg_update_x}, would need to receive all the $x_h^{k+1}$ for every $(h,i)\in \cE$, which can be transmitted from $h$ to $i$, and all the variables $w_{(i,j)}^k$ and $w_{(h,i)}^k$ for all $(i,j)\in \cE'$ and $(h,i)\in \cE'$. Here, we can assume that each variable $w_{(h,i)}^k$ is stored and updated by agent $i$ for all $(h,i)\in \cE'$. In this way, agent $i$ would only need to receive, in addition to all $x_h^{k+1}$ from each $h$ with $(h,i)\in \cE$, all the $w_{(i,j)}^k$ from each $j$ with $(i,j)\in \cE'$. Eventually, once agent $i$ computed $x_i^{k+1}$, agent $i$ can also update all the variables $w_{(h,i)}^{k}$ for all $h$ such that $(h,i)\in \cE'$, as this computation only involves $w_{(h,i)}^k$ (already stored), $x_h^{k+1}$ (received from $h$) and $x_i^{k+1}$ (just computed). In summary, given a bilevel graph $biG = (\cN, \cE,\cE')$ with a tree base graph $G'=(\cN, \cE')$, in the notation of Algorithm \ref{alg:algorithm_trees}, we get the fully distributed protocol described in Algorithm \ref{alg:distributed_optimization}.

	\begin{algorithm}[H]\label{alg:distributed_optimization}

		\textbf{Initialize:} For each $(h,i)\in \mathcal{E}'$, agent $i$ chooses $w_{(h,i)}^0\in H$ and sends $w_{(h,i)}^0$ to each agent $h$ such that $(h,i)\in \cE'$

		\For{$k=0, 1, \dots$}{
		\For{$i \in \cN$}{
		Agent $i$ receives $w_{(i,j)}^k$ from each agent $j$ such that $(i,j)\in \mathcal{E}'$, and $x_h^{k+1}$ from each agent $h$ such that $(h,i)\in \mathcal{E}$\\
		Agent $i$ computes $x_i^{k+1}$ as in \eqref{eq:tree_bg_update_x}, and updates $w_{(h,i)}^{k+1}$ for all $(h,i)\in \mathcal{E}'$ as in \eqref{eq:tree_bg_update_w}\\
		Eventually, agent $i$ sends $x_i^{k+1}$ to each agent $j$ such that $(i,j) \in \mathcal{E}$, and $w^{k+1}_{(h,i)}$ to each agent $h$ such that $(h,i) \in \mathcal{E}'$
		}
		}
		\caption{Distributed protocol for the graph-based Douglas--Rachford method with a tree base graph.}
	\end{algorithm}

	\subsection{General base graphs}

	As we will show in the experiments, considering base graphs with high algebraic connectivity could be beneficial in terms of convergence speed. Here, though, the design of a distributed protocol with in some sense minimal information flow through the network can lead to severe graph-theoretical challenges, as one would need to find a sparse onto decomposition of the Laplacian of the base graph, which for large scale instances can be costly or even intractable \cite{Rose1972183,chordal_vandenberghe}.

	For general base graphs, we can avoid such sparse factorization issues at the cost of introducing one additional variable, ending up with $N$ variables instead of $N-1$. Specifically, considering the change of variables $\widetilde{\bw}^k=\cZ \bw^k$ for all $k \in \mathbb{N}$, the graph-based Douglas--Rachford iteration in \eqref{eq:proposed_reduced} reads for all $k\in \mathbb{N}$ as
	\begin{equation}\label{eq:proposed_reduced_Zw}
		\left\{
		\begin{aligned}
			\bx^{k+1}             & = \left(\mathcal{L}+\mathcal{A}_L\right)^{-1}\widetilde{\bw}^k, \\
			\widetilde{\bw}^{k+1} & = 	\widetilde{\bw}^k-\theta_k\mathcal{L}\bx^{k+1},               \\
		\end{aligned}
		\right.
		\quad \widetilde{\bw}^0=(\widetilde{w}^0_1,\dots, \widetilde{w}^0_N) \in \Img \cZ.
	\end{equation}

	\noindent In this setting, we assume that the variable $\widetilde{w}_i$ is stored and updated by the agent $i$, for every $i\in \{1,\dots, N\}$. The only additional restriction, here, is that the starting point $\widetilde{\bw}^0$ should be chosen in the range of $\cZ$, namely, such that $\widetilde{w}^0_1+\dots+\widetilde{w}^0_N = 0$. To avoid further communications between the agents, we could simply let the agents initialize $\widetilde{w}^0_i = 0$ for every $i\in \{1,\dots, N\}$. Setting, for all $i\in \cN$, $d_i'$ to be the degree of $i$ in $G'$, we get the distributed protocol described in Algorithm \ref{alg:distributed_general_graphs}.

	\begin{algorithm}[H]\label{alg:distributed_general_graphs}

		\textbf{Initialize:} For each $i\in \mathcal{N}$, agent $i$ chooses $\widetilde{w}_i^0 = 0 \in H$

		\For{$k=0, 1, \dots$}{
		\For{$i \in \cN$}{
		Agent $i$ receives $x_h^{k+1}$ from each agent $h$ such that $(h,i)\in \mathcal{E}$\\
		Agent $i$ computes $x_i^{k+1}$ as
		\begin{equation*}
			x_i^{k+1} = J_{\frac{\sigma}{d_i}A_i}\bigg(\frac{2}{d_i}\sum_{(h,i)\in \cE}x_h^{k+1} +  \frac{1}{d_i}\widetilde{w}_i\bigg)
		\end{equation*}\\
		Agent $i$ sends $x_i^{k+1}$ to each agent $j$ such that $(i,j)\in \cE$ and to each agent $h$ such that $(h,i)\in \cE'$\\
		Agent $i$ receives $x_j^{k+1}$ from all the agents $j$ such that $(i,j)\in \cE'$ and, eventually, updates $\widetilde{w}_i^{k+1}$ according to
		\begin{equation*}
			\widetilde{w}_i^{k+1} = \widetilde{w}_i^k -\theta_k\bigg(d_i' x_i^{k+1}-\sum_{j \in \text{adj}(i; G')} x_j^{k+1} \bigg)
		\end{equation*}
		}
		}
		\caption{Distributed protocol for the graph-based Douglas--Rachford method with a general base graph.}
	\end{algorithm}

	Note that Algorithm \ref{alg:distributed_general_graphs}, contrarily to Algorithm \ref{alg:algorithm_trees}, features two communication phases. The first communication phase is similar to the communication phase in Algorithm \ref{alg:algorithm_trees}, but only involves a transmission of the solution estimates. The second communication phase is fundamental, because to update $\widetilde{w}_i^{k+1}$, node $i$ needs: $\widetilde{w}_i^{k}$ (already stored), $x_i^{k+1}$ (computed before) and all the $x_j^{k+1}$ for all $j \in \text{adj}(i; G')$. The latter can be divided into two classes: $x_j^{k+1}$ with $j \leq i$, which node $i$ received in the first communication phase, and $x_j^{k+1}$ with $j\geq i$, received in the second communication phase. Note, in particular, that in Algorithm \ref{alg:distributed_general_graphs} only the solution estimates $x_1^{k}, \dots, x_N^{k}$ are shared.

	\section{Numerical experiments}\label{sec:experiments}

	In this section, we present our numerical implementation of the graph-based DRS and its distributed variant applied to a congested optimal transport problem and to a distributed Support Vector Machine problem. All the experiments are performed in Python on a Intel(R) Core(TM) i5-5200U CPU @ 2.20GHz and 8 Gb of RAM and are available for reproducibility at \url{https://github.com/TraDE-OPT/graph-DRS}.

	\subsection{Congested transport}\label{sec:congestedtransport}
	The congested transport problem has a rich history that dates back to the works of Beckmann in the 50's \cite{beckmann}. The problem gathered a renewed interest more recently thanks to its connections to the optimal transport theory, mainly due to Santambrogio, Carlier et al.~\cite{carlier_santambrogio}. The problem is of the form
	\begin{equation}\label{eq:congested_transport_problem}
		\min_{\sigma \in L^{3/2}(\Omega, \mathbb{R}^2)} \ \int_\Omega \|\sigma(x) \|^{3/2}dx +\int_\Omega \|\sigma(x) \|dx \quad \text{subject to:} \
		\bigg\{
		\begin{aligned}
			\Div \sigma = \nu-\mu & \ \text{in $\Omega$},          \\
			\sigma \cdot n = 0    & \ \text{on $\partial \Omega$},
		\end{aligned}
	\end{equation}
	where $\Omega$ is a smooth compact domain in $\mathbb{R}^2$, and $\mu, \nu$ are two (sufficiently regular) probability densities. The divergence constraint in \eqref{eq:congested_transport_problem} has to be understood weakly with Neumann boundary conditions ($n$ is the outward unit vector), see \cite[Section 4.4.1]{santambrogio2015optimal} for further details. A feasible $\sigma$ is referred to as \textit{transport flow} and its total variation $|\sigma|:\Omega \to [0, \infty)$, defined for all $x\in \Omega$ as $|\sigma |(x):=\|\sigma(x)\|$, is referred to as \textit{transport density}. To give a rough idea, the integral of the transport density on some region $A\subset \Omega$ can be understood as the total amount of mass that is moving from $\mu$ to $\nu$ passing through $A$, while the flow contains the information on the \textit{direction} that the mass is taking.

	Here, we suppose that the two probability densities $\mu$ and $\nu$ are separated by a region that does not allow any transportation, e.g., a lake, on top of which there is a bridge with limited capacity. Specifically, we introduce in \eqref{eq:congested_transport_problem} an additional constraint of the form $\| |\sigma|_{\texttt{Brg}}\|_\infty\leq C$ and $\sigma|_{\texttt{Wtr}}=0$ for some $C>0$, where $|\sigma|_{\texttt{Brg}}$ and $|\sigma|_{\texttt{Wtr}}$ are the restrictions of $|\sigma|$ on $\texttt{Brg}\subset \Omega$ (the bridge), and $\texttt{Wtr}\subset \Omega$ (the lake), respectively ($\texttt{Wtr}\cap \texttt{Brg}=\emptyset$). Confer to Figure \ref{fig:experiment_output} for an illustration.

	We discretize the problem on a square grid of size $n = p\times p$ (that we shall still denote by $\Omega$) using forward finite differences, getting to a problem of the form
	\begin{equation}\label{eq:cong_transp_disc}
		\min_{\sigma \in \mathbb{R}^{n\times 2}} \ \sum_{i=1}^n \|\sigma_i \|^{3/2} +\sum_{i=1}^n \|\sigma_i \| + \mathbb{I}\big\{\Lambda\sigma = \nu-\mu\big\} + \mathbb{I}\bigg\{ \
		\begin{aligned}
			\sigma_{|\texttt{Wtr}} & = 0,                                \\
			\|\sigma_{i}\|         & \leq C \ \forall i \in \texttt{Brg}
		\end{aligned} \  \bigg\}
	\end{equation}
	where \texttt{Brg} and \texttt{Wtr} denote the set of indices corresponding to the bridge and the lake, respectively, and $\Lambda$ is the discrete divergence operator with no-flux constraints on the boundary, and $\mathbb{I}\{x\in C\}$ denotes, with a slight abuse of notation, the indicator function of the convex set $C$, i.e., $\mathbb{I}\{x \in C\} = +\infty$ if $x\not\in C$ and $0$ else. The optimization problem \eqref{eq:cong_transp_disc} has four non-smooth \textit{simple} terms, i.e., for which one can easily compute the proximity operator and, thus, it is for us a good benchmark problem on which we can investigate all the different graph-based extensions of DRS method.

	\paragraph{The influence of the connectivity.} In this experiment, we investigate the influence of the algebraic connectivity of the base graph on the convergence performance of Algorithm \ref{alg:splitting_1}. As $N=4$, there are exactly $38$ possible state graphs. By enumeration, one can further see that the algebraic connectivity takes exactly four possible values, namely: $\lambda_1 = 2-\sqrt{2} \ (0.5857...), \ 1, \ 2$ and $4$.

	In our first numerical experiment, we set $p=35$, and consider bilevel graphs with equal state and base graphs. We pick a step-size $\tau=2$ and set $C = 5 \cdot 10^{-2}$. For each choice of the base graph $G'=(\cN, \cE')$, we run Algorithm \ref{alg:splitting_1} with step-size $\tau$, bilevel graph $biG = (\cN, \cE, \cE')$ with $\cE=\cE'$, and starting point $\bw^0=0$, and plot the state variance  \eqref{eq:state_variance} with respect to the iteration number with a specific color representing the algebraic connectivity of $G'$. The results are shown in Figure \ref{fig:connectivity}(a). Then, we repeat the procedure fixing a complete state graph and letting the base graph vary among all possible sub-graphs. The results are shown in Figure \ref{fig:connectivity}(b). Recall from Remark \ref{rem:independence_from_onto} that Algorithm \ref{alg:splitting_1} is in some sense independent on the onto decomposition of the Laplacian of the base graph, thus, we do not compare different factorization choices.

	\begin{figure}[ht]
		\centering
		\begin{subfigure}{0.45\textwidth}
			\centering
			\includegraphics[width=1\textwidth]{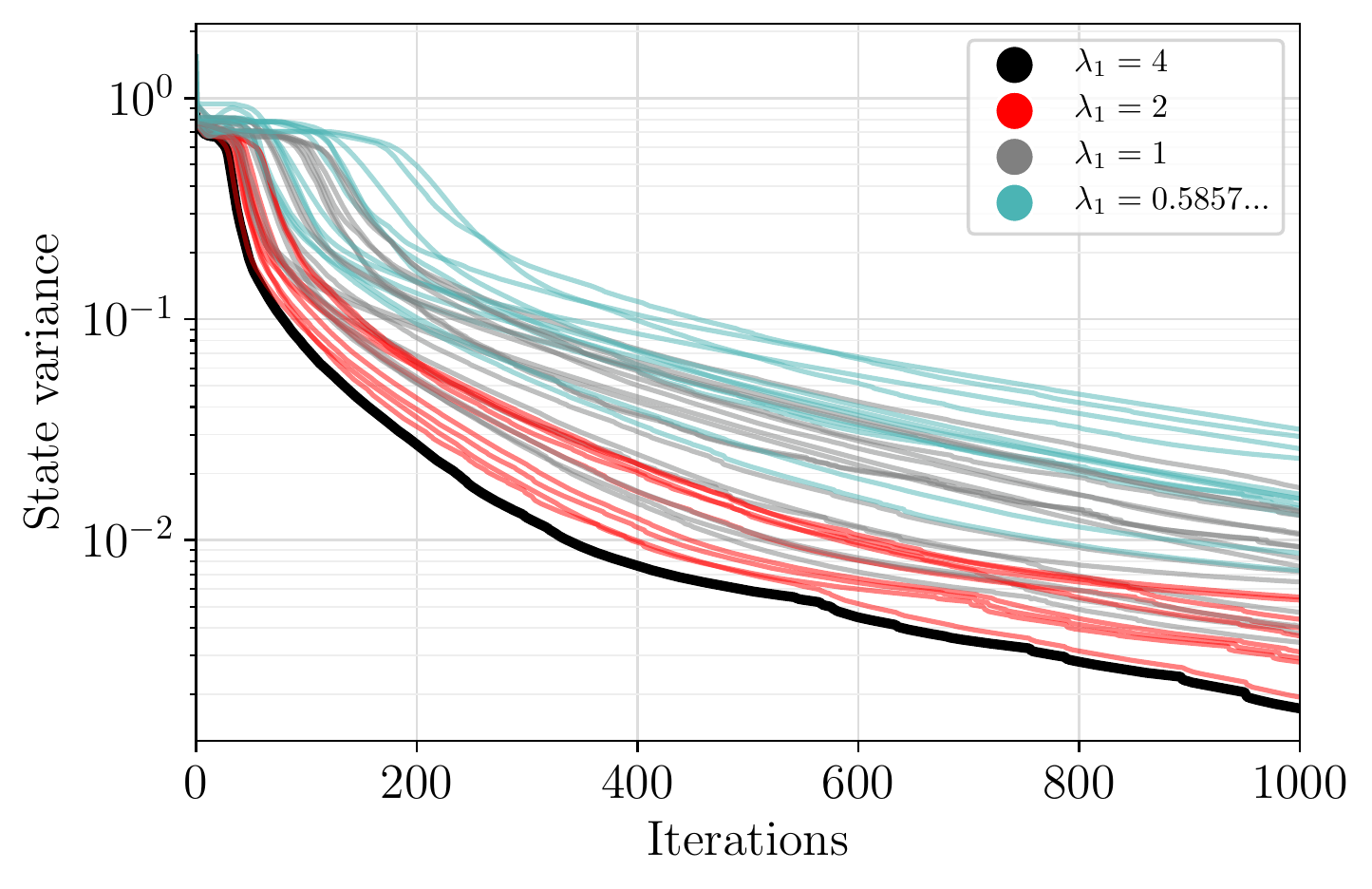}
			\caption{Setting the state graph $G$ equal to base graph $G'$ and letting $G$ vary.}
		\end{subfigure}
		\hfill
		\begin{subfigure}{0.45\textwidth}
			\centering
			\includegraphics[width=1\linewidth]{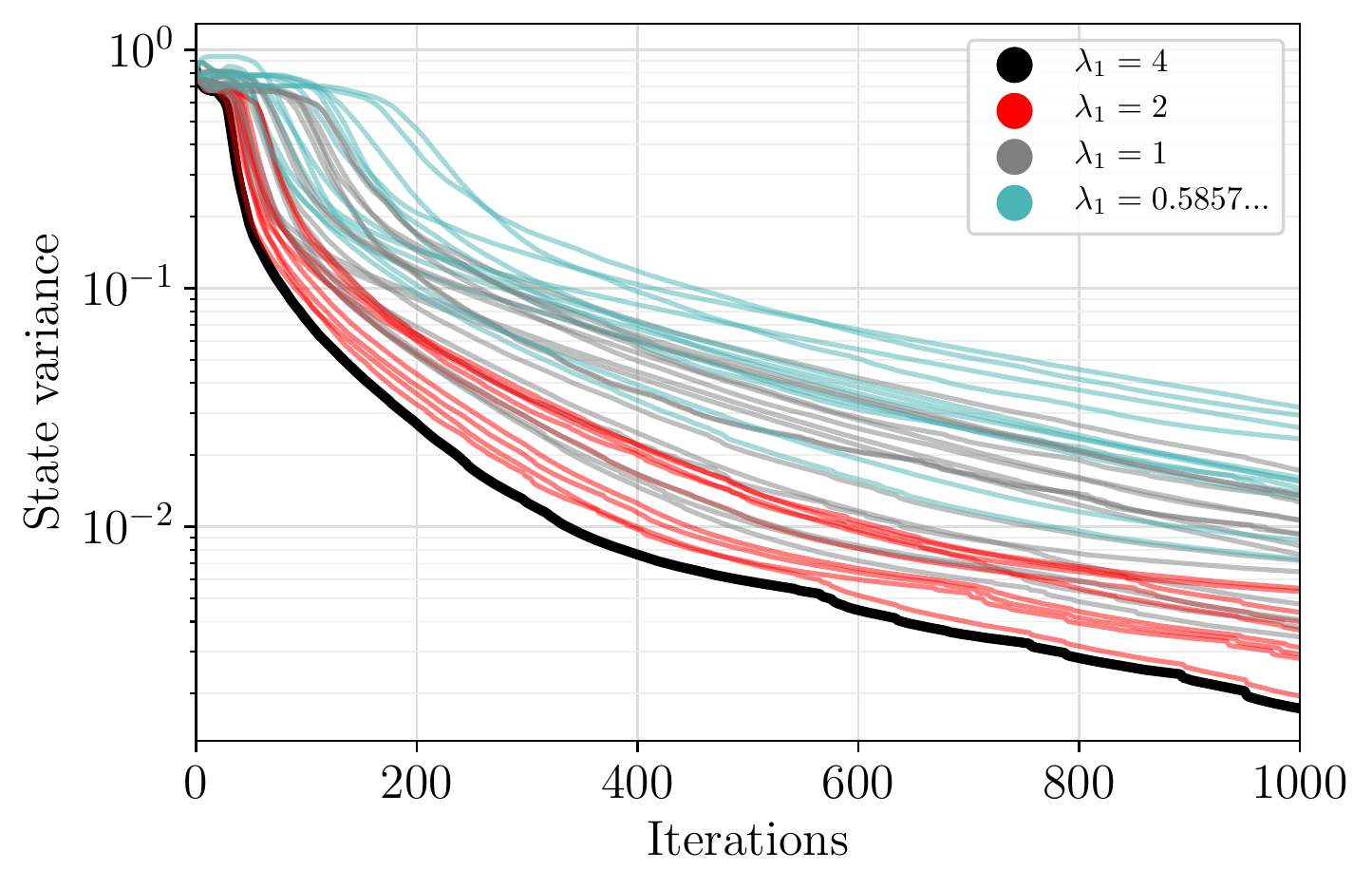}
			\caption{Setting the state graph $G$ to be the complete graph and letting $G'$ vary.}
		\end{subfigure}
		\caption{Influence of the algebraic connectivity $\lambda_1$ of the base graph on the decrease of the state variance  \eqref{eq:state_variance} as a function of the iteration number.}
		\label{fig:connectivity}
	\end{figure}

	In both cases, we can clearly see an effect of the algebraic connectivity of the base graph on the decrease of the state variance. Such a phenomenon is pretty common in the distributed optimization literature, see for instance \cite{Holly2021}. We plan to investigate it better in future works.

	\paragraph{The choice of the output.} In this experiment, we consider a much more refined grid ($p=720$), we set $G'$ (and thus $G$) to be the complete graph, pick the step-size $\tau = 10^{-1}$, and compare the four different estimates of the optimal solution to \eqref{eq:cong_transp_disc} yielded by Algorithm \ref{alg:splitting_1} before reaching consensus, which we denote by $\sigma_1^{k}, \dots, \sigma_4^{k}$ for every $k\in \mathbb{N}$. Specifically, $\sigma_1^k$ is associated to the divergence constraint, $\sigma_2^k$ to the superlinear term, $\sigma_3^k$ to the $\ell^1$ functional, and $\sigma_4^k$ to the bridge and water constraint (the last functional in \eqref{eq:cong_transp_disc}). As expected, in the red squares in Figure \ref{fig:experiment_output}(a) we can see that the solution estimate corresponding to the divergence constraint always satisfies the conservation law (i.e., the divergence constraint), while not respecting exactly the bridge constraint. The solution estimate corresponding to the bridge constraint presents the opposite behavior, and the one corresponding to the $\ell^1$ functional does provide a very \textit{sparse} estimate. These differences and an (approximately) optimal solution are shown in Figure \ref{fig:experiment_output}.

	\begin{figure}\label{fig:compare_outputs}

		\begin{subfigure}{0.49\textwidth}
			\centering
			\label{fig:compare_outputsa}
			\begin{tikzpicture}
				\node(a){\includegraphics[width=0.8\textwidth]{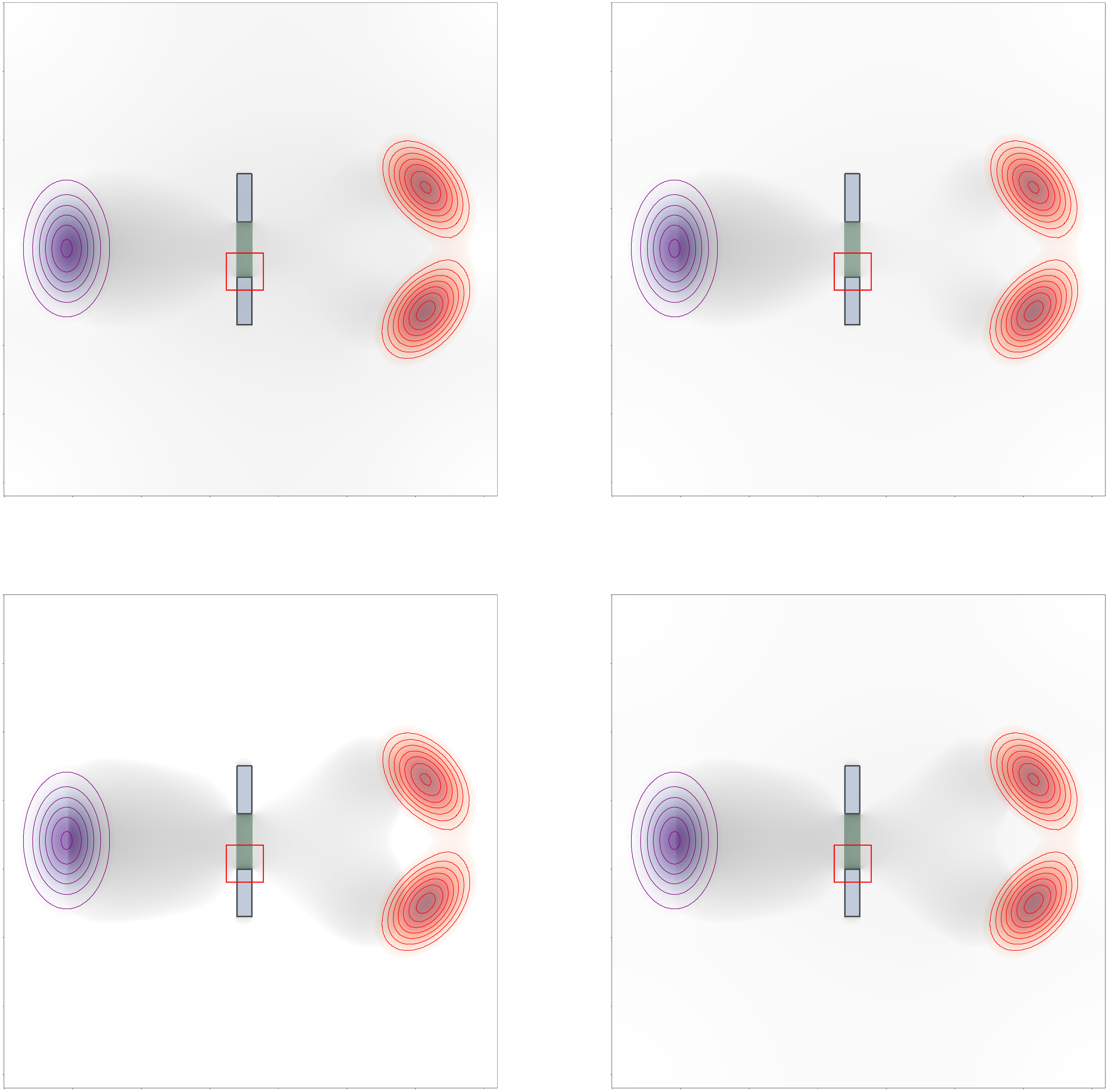}};
				\node at (a)
				[
					anchor=center,
					xshift=0mm,
					yshift=0mm
				]
				{
					\includegraphics[width=0.3\textwidth]{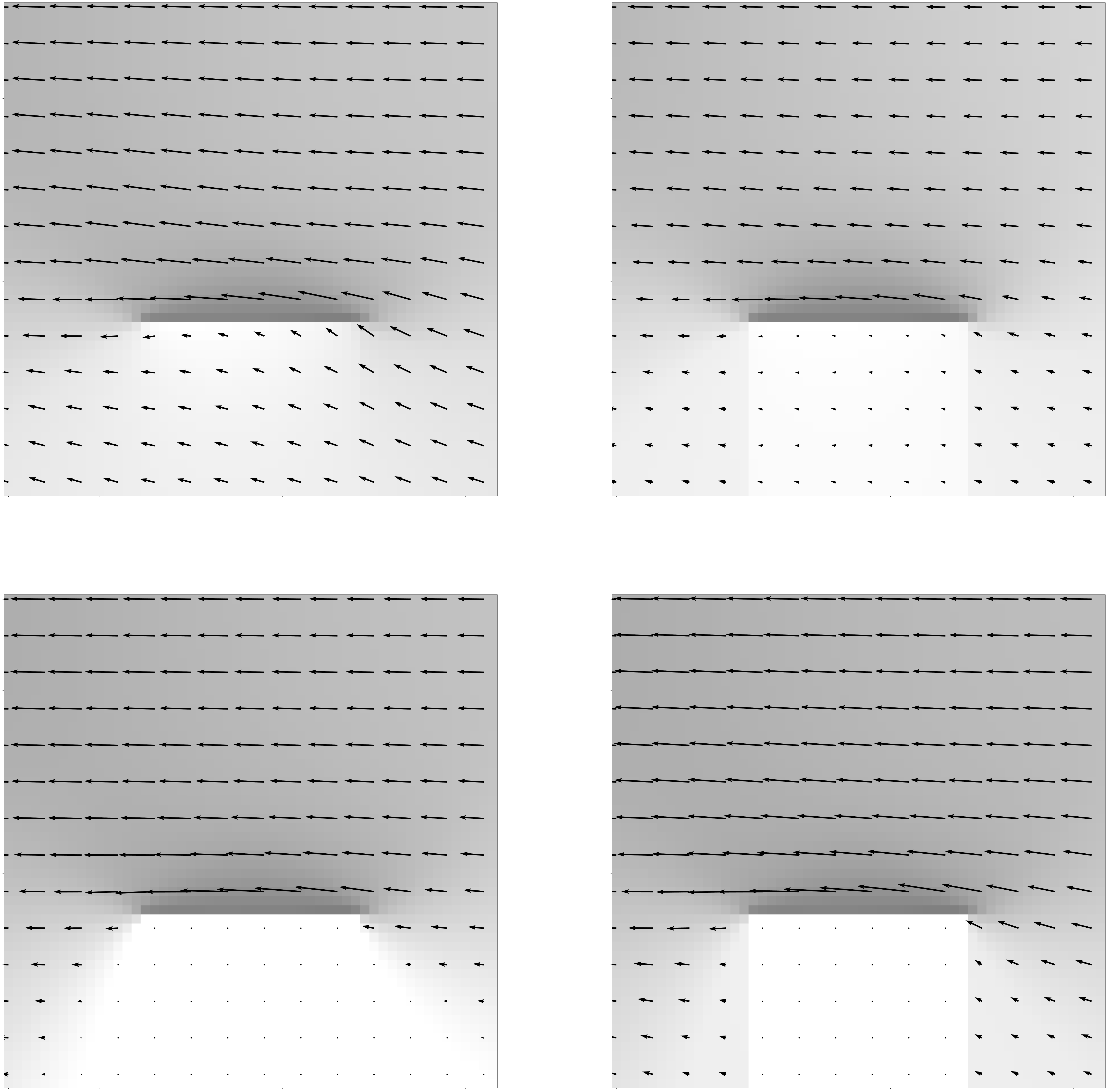}
				};
				\node at (-2.7,2.7) {$|\sigma_1^k|$};
				\node at (2.7,2.7) {$|\sigma_2^k|$};
				\node at (2.7,-2.7) {$|\sigma_4^k|$};
				\node at (-2.7,-2.7) {$|\sigma_3^k|$};
			\end{tikzpicture}
			\caption{Solution estimates at the early iteration $k=40$. The red squares correspond to the pictures displayed in the middle.}
		\end{subfigure}
		\hfill
		\begin{subfigure}{0.48\textwidth}
			\centering
			\includegraphics[width=0.81\linewidth]{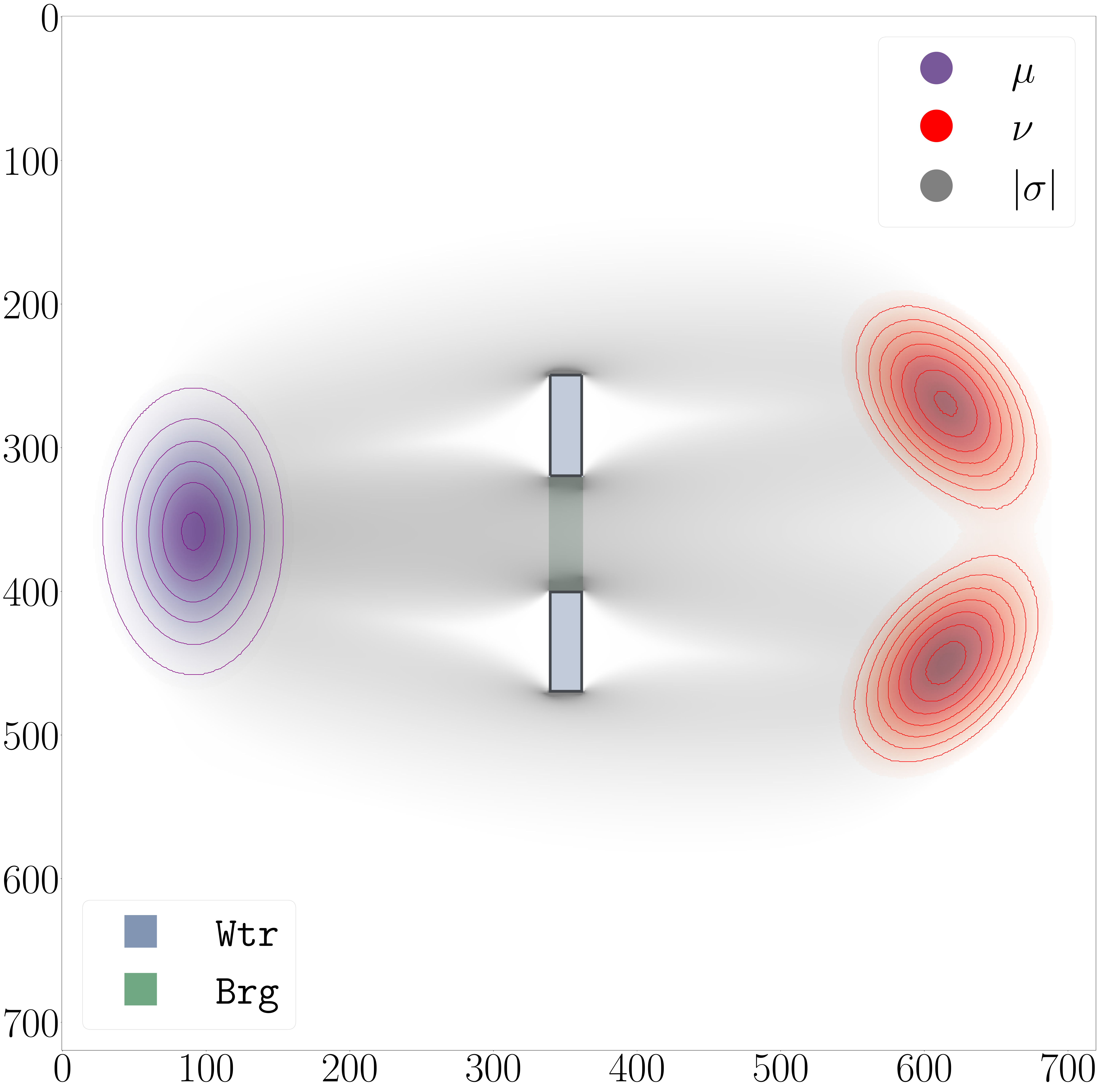}
			\caption{Solution estimate $\sigma_3^k$ after $300$ iterations and a state variance of $\Var(\boldsymbol{\sigma}^{k})=10^{-4}$, where $\boldsymbol{\sigma}^k=(\sigma_1^k, \dots, \sigma_4^k)$ is the vector of solution estimates.}
		\end{subfigure}
		\caption{Comparison of different output choices. To better visualize the vector fields $x \mapsto \sigma(x)$ we show in gray-scale the intensity of the corresponding transport densities, i.e., the scalar fields $|\sigma|:x \mapsto \| \sigma(x) \|$. The blue and the red regions correspond to the measures $\mu$ and $\nu$ respectively. Confer to Section \ref{sec:congestedtransport} for a detailed description.}
		\label{fig:experiment_output}
	\end{figure}

	\subsection{Distributed SVM}\label{sec:distributed_svm}

	In this experiment, we show an application of the proposed graph-based Douglas--Rachford method in a fully distributed optimization framework. We consider the classical \emph{Support Vector Machine} (SVM) problem formulated in primal form \cite{Chapelle2007}:
	\begin{equation}\label{eq:svm_infinite_dim}
		\min_{f \in \mathcal{H}_K} \ \sum_{i=1}^n \max\{1-y_if(x_i), 0\}+\gamma\|f\|_K^2,
	\end{equation}
	where $\{(x_1, y_1), \dots, (x_n, y_n)\}$ are labeled points in some domain $\Omega \subset \mathbb{R}^d$, $\gamma$ is a positive parameter, and $\mathcal{H}_K$ is a Reproducing Kernel Hilbert Space \cite{Aronszajn1950} endowed with the scalar product induced by the Gaussian kernel, namely $k(x,y) := \exp\{-\|x-y\|^2/(2\sigma^2)\}$ for some fixed $\sigma >0$ and all $x,y \in \Omega$.
	Despite having an infinite-dimensional formulation, by the Representer Theorem \cite{rep_thm}, an optimal solution of \eqref{eq:svm_infinite_dim} can be found as linear combination of the kernel function evaluated at the training points, namely $f^*(x) := \alpha_1^* k(x_1, x)+ \dots + \alpha_n^* k(x_n, x)$ for all $x \in \Omega$. Hence, problem \eqref{eq:svm_infinite_dim} admits a finite-dimensional reformulation, which reads as
	\begin{equation}\label{eq:distributed_SVM_objfun}
		\min_{\alpha \in \mathbb{R}^n} \ \sum_{i=1}^{n}\max\{1-y_i(k_i\cdot \alpha), 0\}+\gamma\alpha^*K\alpha,
	\end{equation}
	where $K\in \mathbb{R}^{n\times n}$ is the matrix defined as $K_{ij}:=k(x_i, x_j)$ for all $x_i, x_j$ in the training set, and $k_i\in \mathbb{R}^n$ is the $i^{th}$ row of $K$ for all $i\in \{1,\dots, n\}$.

	\paragraph{Privacy and communication constraints.} Suppose that each training point $x_i \in \Omega$ represents an agent equipped with a personal information, i.e., the label $y_i\in \{\pm 1\}$. Due to some security policy, each agent is able to communicate only with a \emph{local superior}, that we will refer to as \textit{official}. An official is an agent with a higher relevance in the network, who is in charge to treat the information of a subset of agents in its neighborhood. We assume that each official knows the full matrix $K$, whereas agent $i$ only knows its label $y_i$ and the $i^{th}$ row of $K$. We suppose there are $C$ officials spread around $\Omega$, which can communicate between each other. Of course, such a structure serves as an example and it has to be clear that, in general, any communication graph can be considered.

	Taking into account our communication constraints, we set: $g_c(\alpha):=d_c/(\sum_{c=1}^Cd_c) \alpha^* K \alpha$ for each $c \in \{1,\dots,C\}$, where $d_c$ are the degrees of the officials. Then, we partition the set of indices $i \in \{1, \dots, n\}$ into $C$ sets $\mathcal{I}_1, \dots, \mathcal{I}_C$, each containing exactly $p$ indices corresponding to the agents that communicate with the official $c$, and denote by $h_{c,i}(\alpha) :=\max\{1-y_\xi(k_\xi\cdot \alpha), 0\}$, where $\xi$ is the index of the $i^{th}$ agent under the official $c$. Therefore, the objective function \eqref{eq:distributed_SVM_objfun} can be split into
	\begin{equation}\label{eq:obj_fun_splitted}
		\min_{\alpha\in \mathbb{R}^n} \ \gamma \sum_{c=1}^C g_c(\alpha) +\sum_{c=1}^C\sum_{i=1}^p h_{c,i}(\alpha).
	\end{equation}
	In our model problem, we suppose that the officials are located in a circle in such a way that the first can communicate with the second and the last, while the second can only communicate with the third and the first, the third with the second and the fourth and so on. The resulting communication structure is depicted in Figure \ref{fig:communication_structure_SVM}. Specifically, we consider the ordered directed graph $G=(\cN, \cE)$ with nodes $\cN = \{1,\dots, C\}\cup (\{1,\dots, C\}\times \{1,\dots, p\})$, and edges $\cE := \{ (c,(c,i)) \mid c=1, \dots, C, \ i = 1, \dots, p \} \cup \{  (c,c+1) \mid c=1,\dots, C-1 \}\cup\{(1,C)\}$. Note that $\cN$ can be ordered enumerating the nodes as follows: $1, (1,1), (1,2), \dots, (1,p)$, $2, (2,1), (2,2),\dots, (2,p)$, $\dots$, $C, (C,1),\dots, (C,p)$. The terms in the objective function \eqref{eq:distributed_SVM_objfun} are associated with the nodes in $\cN$ in the obvious way (i.e., each function $g_c$ is associated to the node $c$, and each function $h_{c,i}$ is associated to the node $(c,i)$ for all $c=1,\dots, C$ and $i=1,\dots, p$), and, thus, can be ordered analogously.

	\begin{figure}
		\centering
		\begin{tikzpicture}
			\begin{scope}[every node/.style={circle,thick,draw, scale=0.025cm}]
				\node (C1) at (0,0) {$1$};
				\node (C2) at (3.5,0) {$2$};
				\node[draw=none] (CD) at (5.5,0) {\includegraphics[width=0.05\linewidth]{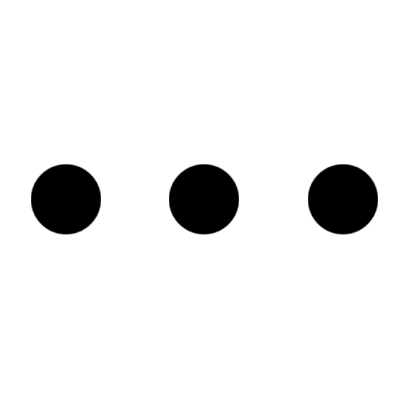}};
				\node (CC) at (7.5,0) {$C$};
				\node (A11) at (-1.2,-2) {$(1,1)$};
				\node (A12) at (-0.5,-2.6) {$(1,2)$};
				\node[draw=none] (A1D) at (0.5, -2.5) {\includegraphics[width=0.05\linewidth]{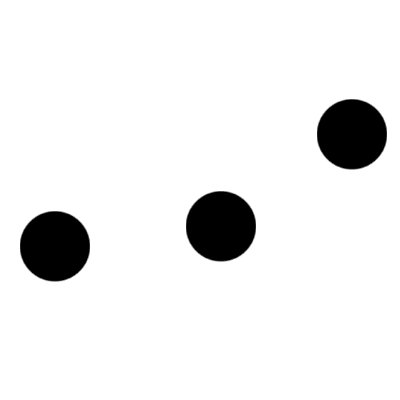}};
				\node (A1N) at (1.2,-2) {$(1,p)$};
				\node (A21) at (2.3,-2) {$(2,1)$};
				\node (A22) at (3,-2.6) {$(2,2)$};
				\node[draw=none] (A2D) at (4, -2.5) {\includegraphics[width=0.05\linewidth]{dots1.png}};
				\node (A2N) at (4.7,-2) {$(2,p)$};
				\node (AC1) at (6.3,-2) {$(C,1)$};
				\node (AC2) at (7.1,-2.6) {$(C,2)$};
				\node[draw=none] (ACD) at (8, -2.5) {\includegraphics[width=0.05\linewidth]{dots1.png}};
				\node (ACN) at (8.7,-2) {$(C,p)$};
			\end{scope}
			\begin{scope}[>={Stealth[black]},
				every node/.style={fill=white,circle},
				every edge/.style={draw=black,very thick}]
				\path  [->] (C1) edge[bend left=10] (C2);
				\path  [->] (C2) edge[bend left=20] (CD);
				\path  [->] (CD) edge[bend left=10] (CC);
				\path  [->] (C1) edge[bend right=10] (A11);
				\path [->] (C1) edge[bend right=10] (A12);
				\path [->] (C1) edge[bend left=10]  (A1D);
				\path [->] (C1) edge[bend left=10] (A1N);
				\path [->] (C2) edge[bend right=10] (A21);
				\path [->] (C2) edge[bend right=10] (A22);
				\path [->] (C2) edge[bend left=10]  (A2D);
				\path [->] (C2) edge[bend left=10] (A2N);
				\path [->] (CC) edge[bend right=10] (AC1);
				\path [->] (CC) edge[bend right=10] (AC2);
				\path [->] (CC) edge[bend left=10]  (ACD);
				\path [->] (CC) edge[bend left=10] (ACN);
			\end{scope}
			\begin{scope}[>={Stealth[black]},
				every node/.style={fill=white,circle},
				every edge/.style={draw=black}]
				\path [->] (C1) edge[bend left=30]  (CC);
			\end{scope}
		\end{tikzpicture}
		\caption{Bilevel graph considered in the distributed SVM experiment, cf., Section \ref{sec:distributed_svm}. Thick edges are associated to the base graph.}
		\label{fig:communication_structure_SVM}
	\end{figure}

	\paragraph{Methods and comparisons.} We compare the performance of the following methods.
	\begin{enumerate}
		\item A distributed Douglas--Rachford method according to Algorithm \ref{alg:distributed_optimization}, with step-size $\sigma>0$ and relaxation parameters $\theta_k=1$ for all $k\in \mathbb{N}$, associated to the bilevel graph $biG=(\cN, \cE, \cE\setminus \{(1,C)\})$, where $G=(\cN, \cE)$ is the ordered directed graph depicted in Figure \ref{fig:communication_structure_SVM}. Note in particular, that we are considering a tree base graph.

		\item P-EXTRA \cite[Algorithm 2]{PG-EXTRA} with step-size $\sigma>0$ and \emph{mixing matrices} $W=I-\tfrac{1}{n+C}L$ and $\widetilde{W}=\tfrac{1}{2}(W+I)$, where $L\in \mathbb{R}^{(n+C)^2}$ is the graph Laplacian of $G$ and $I\in \mathbb{R}^{(n+C)^2}$ is the identity.
		\item A distributed PDHG method \cite{Chambolle2011} obtained with the following procedure. Let $L$ be the Laplacian of the state graph in Figure \ref{fig:communication_structure_SVM}, and consider $\boldsymbol{L}:=L\otimes I_n$ where $I_n$ is the identity matrix in $\mathbb{R}^n$. The PDHG method can be used to solve in a distributed fashion the following product-space reformulation of \eqref{eq:distributed_SVM_objfun}:
		      \begin{equation}
			      \min_{\boldsymbol{\alpha}\in \mathbb{R}^{n\times(n+C)}} \ \gamma\sum_{c=1}^C g_c(\alpha_c) + \sum_{c=1}^C\sum_{i=1}^p h_{c,i}(\alpha_{c,i})+\mathbb{I}\{ \boldsymbol{L}\boldsymbol{\alpha} = \mathbf{0}\},
		      \end{equation}
		      where $\alpha_c$ and $\alpha_{c,i}$ for all $c\in \{1,\dots, C\}$ and $i\in \{1,\dots, p\}$ are all $n$-dimensional vectors and together form the $n\times (n+C)$ vector $\boldsymbol{\alpha}$. Note that $\boldsymbol{L}\boldsymbol{\alpha} = \mathbf{0}$ if and only if all these $\alpha$ coincide.
	\end{enumerate}

	To test and compare the three methods we use the following procedure. We create an artificial dataset in $\mathbb{R}^2$ with $n = 50$, $C=5$, and $p = 10$. We pick $10$ values for the step-size $\sigma > 0 $ in a logarithmic scale between $10^{-2}$ and $10^1$. For each step-size $\sigma$ we run the distributed DRS and P-EXTRA with step-size $\sigma$. For the PDHG method we only vary the primal step-size choosing the dual step-size $\gamma =(\sigma \|L\|^2)^{-1}$ to guarantee convergence. We tested PDHG also for other choices of dual step-size $\gamma \leq (\sigma \|L\|^2)^{-1}$, but the performance was always worse, and hence, these choices were discarded. At every iteration, every method provides an estimate of the optimal solution to \eqref{eq:distributed_SVM_objfun}, e.g., the mean of all estimated solutions of every single node. Such points are used to evaluate the objective function \eqref{eq:distributed_SVM_objfun} in Figure \ref{fig:comparisons}(a). Further, at every iteration we compute the three state variances, and, eventually, compare them in Figure \ref{fig:comparisons}(b).

	\begin{figure}[h!]
		\centering
		\begin{subfigure}{0.45\textwidth}
			\centering
			\includegraphics[width=1\textwidth]{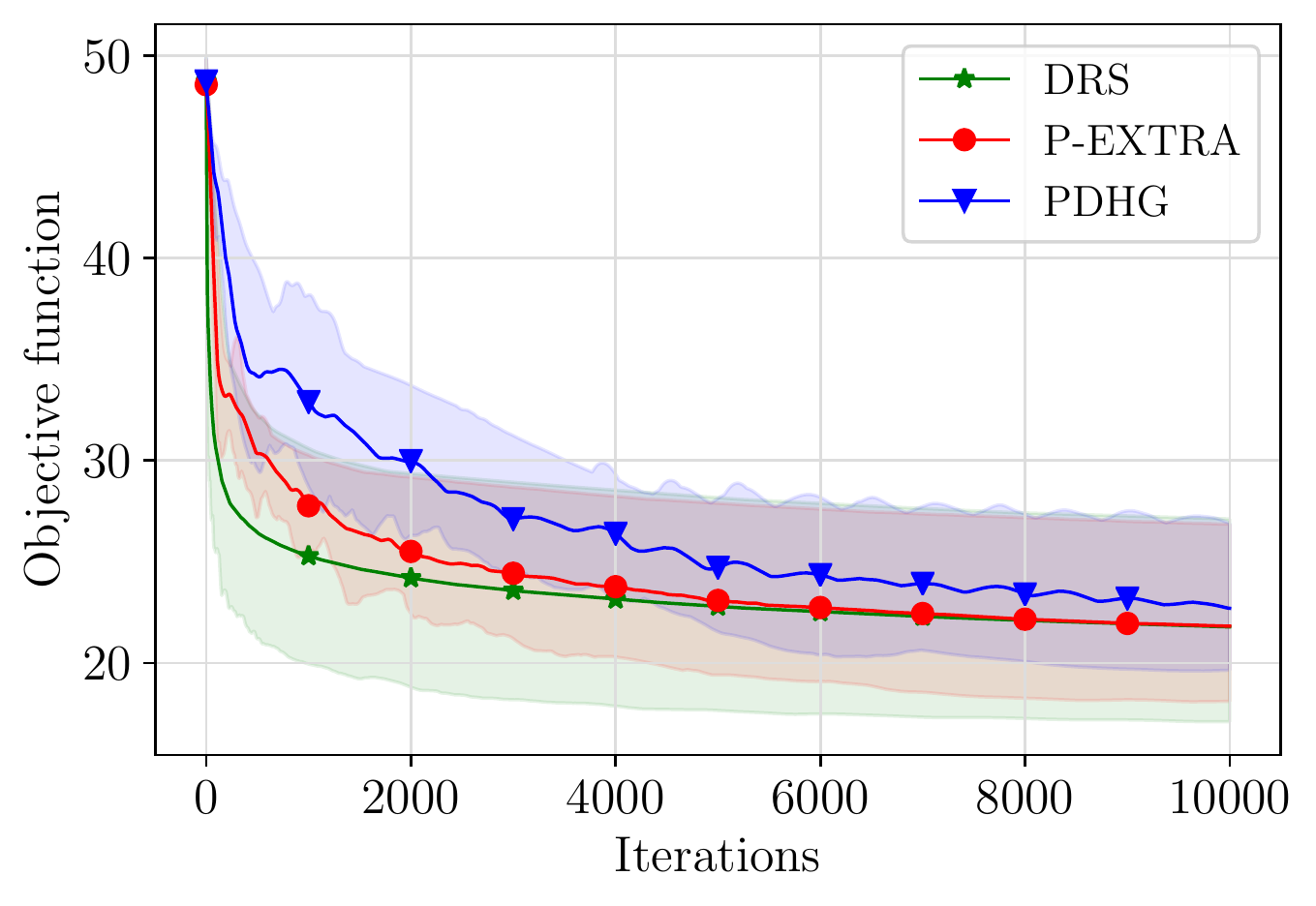}
			\caption{Mean objective function as a function of the iteration number for DRS, P-EXTRA and PDHG.}
		\end{subfigure}
		\hfill
		\begin{subfigure}{0.45\textwidth}
			\centering
			\includegraphics[width=1\linewidth]{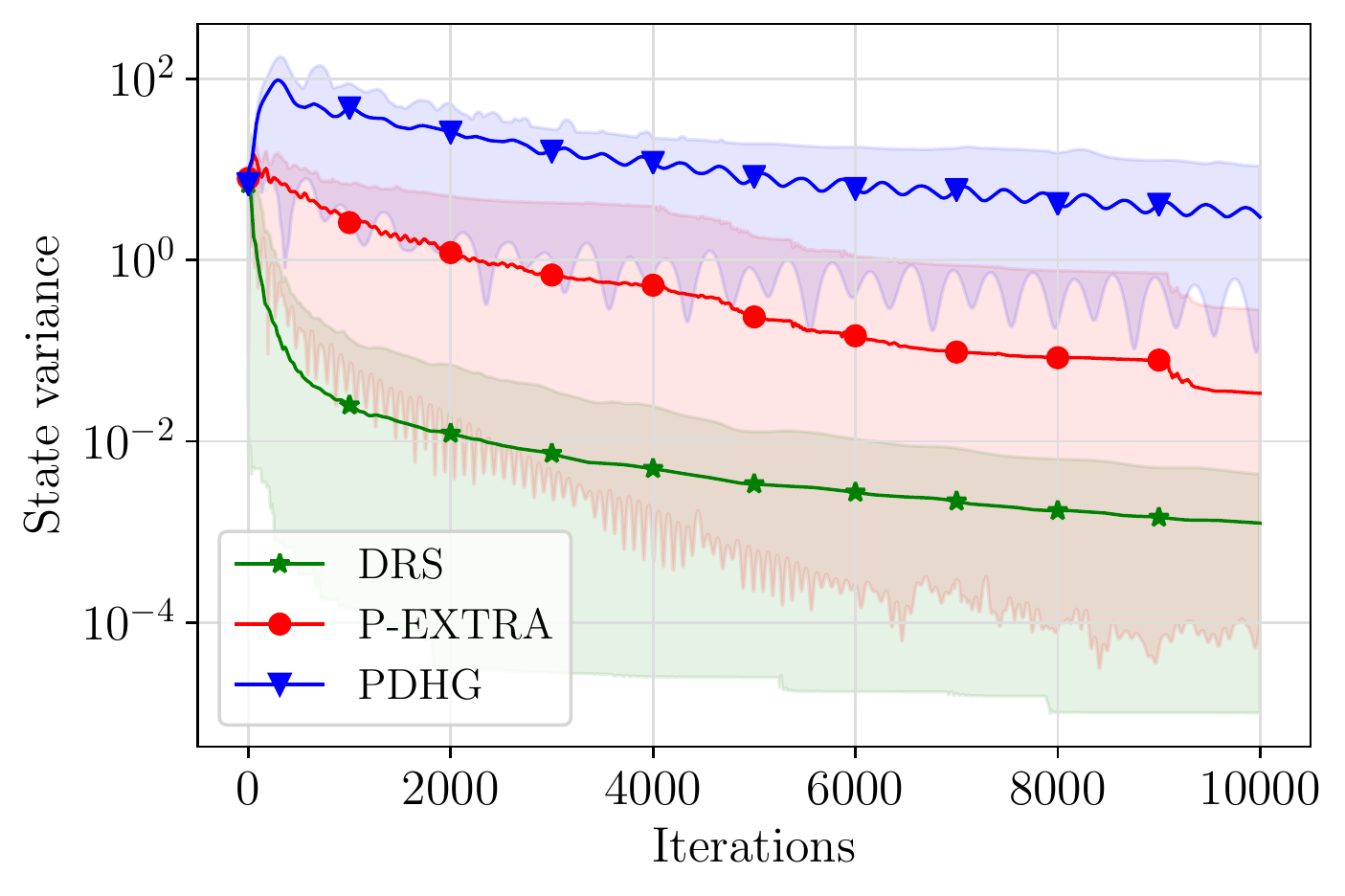}
			\caption{Mean state variance as a function of the iteration number for DRS, P-EXTRA and PDHG.}
		\end{subfigure}
		\caption{Comparison between the proposed method DRS, P-EXTRA and PDHG. For every method, each line is the mean of $10$ independent runs with different step-sizes. The corresponding shaded regions denote the corresponding best and the worst cases.}
		\label{fig:comparisons}
	\end{figure}

	\paragraph{Comments.} Figure \ref{fig:comparisons} shows the mean and the region between the best and the worst performance from $10$ independent runs with different step-size choices, for the three compared methods. For the sake of fairness, we compared the objective function and the state variance for the three methods as a function of the iteration number. Here, we can clearly see that the proposed distributed DRS outperforms P-EXTRA and the distributed PDHG, reaching a state variance of the order $10^{-2}$ within about a thousand of iterations, see Figure \ref{fig:comparisons}(b).

	\section{Conclusions}
	In this work, we proposed graph-based extensions of the DRS method based on the notion of bilevel graph, which encompasses several known and new generalizations of the DRS method to \eqref{eq:Nop}. This work shows that for the $N$-operator problem there are at least as many unconditionally stable FRS methods with a minimal lifting as the number of possible bilevel graphs for \eqref{eq:Nop}. In fact, we believe that these are infinitely many (modulo equivalence). A deeper question is whether the graph-based DRS encompasses \textit{all} possible methods for \eqref{eq:Nop}. This will be the topic of future work. In the future, we also plan to embed $N-1$ forward terms into Algorithm \ref{alg:splitting_1} leading to a graph-based extension of the Davis--Yin method.

\paragraph{Acknowledgments.} This work has received funding from the European Union’s Framework Programme for Research and Innovation Horizon 2020 (2014--2020) under the Marie Skłodowska-Curie Grant Agreement No. 861137. The Institute of Mathematics and Scientific Computing at the University of Graz, with which K.B.~and E.C.~are affiliated, is a member of NAWI Graz  (\url{https://nawigraz.at/en}).

\bibliography{biblio}
\bibliographystyle{ieeetr}

\end{document}